\documentclass[11pt,letterpaper,reqno]{amsart}
\usepackage{tikz}
\usetikzlibrary{positioning, shapes.geometric, arrows.meta}
\usepackage{amssymb}
\usepackage{amsmath}
\usepackage{amsthm}
\usepackage{amsfonts}
\usepackage{bbm}
\usepackage{enumitem} 
\usepackage{pgfplots}
\pgfplotsset{compat=1.18} 
\usepackage{booktabs}

\usepackage{graphicx}
\usepackage[T1]{fontenc}
\usepackage{doi}
\usepackage{float} 
\usepackage{bookmark}
\usepackage{hyperref}
\hypersetup{pdfstartview={FitH}}

\numberwithin{equation}{section}

\newtheorem{thm}{Theorem}[section]
\newtheorem{prop}[thm]{Proposition}
\newtheorem{lem}[thm]{Lemma}
\newtheorem{prob}[thm]{Problem}
\newtheorem{cor}[thm]{Corollary}
\theoremstyle{definition}
\newtheorem{defin}[thm]{Definition}
\theoremstyle{remark}
\newtheorem{remark}[thm]{Remark}
\newtheorem{example}[thm]{Example}

\DeclareMathOperator{\Tr}{Tr}

\DeclareMathOperator{\Dom}{Dom}

\newcommand{\A}{\mathcal{A}}
\newcommand{\M}{\mathcal{M}}
\newcommand{\RR}{\mathbb{R}}
\newcommand{\CC}{\mathbb{C}}

\newcommand{\Id}{\mathbf{1}}
\newcommand{\ind}{\mathbbm{1}} % indicator / spectral projection symbol
\newcommand{\dd}{\,\mathrm{d}}

\newcommand{\Apos}{\A_{+}}
\newcommand{\Ainvpos}{\A_{++}}
\newcommand{\Mpos}{\M_{+}}
\newcommand{\Minvpos}{\M_{++}}

\newcommand{\nablaop}{\mathbin{\nabla}}
\newcommand{\sharpop}{\mathbin{\#}}

\newcommand{\etaop}{\eta}

\newcommand{\normtau}[1]{\left\|#1\right\|_{2,\tau}}

\title[Metric Properties: From $S$-Divergence to Quantum Jensen Divergence]
{Metric Properties: From $S$-Divergence to Quantum Jensen Divergence}

\author{Teng Zhang}
\address{School of Mathematics and Statistics, Xi'an Jiaotong University, Xi'an, P.R.China 710049}
\email{teng.zhang@stu.xjtu.edu.cn}

\subjclass[2020]{46L10, 46L30, 15A42, 94A17}
\keywords{Trace Jensen gap, Stein divergence, trace-log distance, quantum Jensen--Shannon divergence, quantum Jensen divergence, metric, Hilbertian}

\begin{document}
	
\begin{abstract}
	We extend the trace-logarithmic $S$-divergence from matrices to tracial $C^*$-algebras and finite von Neumann algebras, and show that its square root defines a metric on the invertible positive cone. We also prove an integral representation of the quantum Jensen--Shannon divergence in terms of shifted trace-log distances, implying metricity of its square root on the full positive cone in the same tracial framework. In the matrix case, we answer two questions of Virosztek \cite{Vir21} on Hilbertianity. Finally, we show that symmetric quantum Jensen divergences generated by non-affine operator convex functions yield metrics in the tracial setting via a Nevanlinna--Stieltjes type representation of the derivative, which generalizes a result of Carlen, Lieb and Seiringer.
\end{abstract}

	\maketitle
%	\tableofcontents
	
	% ============================================================
	\section{Introduction}
	% ============================================================
	
	Throughout this paper, let $M_n^+(\CC)$ denote the cone of all $n\times n$ positive semidefinite matrices,
	and let $M_n^{++}(\CC)$ denote the cone of all $n\times n$ positive definite matrices.
	
	Let $(\A,\tau)$ be a unital $C^*$-algebra equipped with a faithful tracial state
	\[
	\tau(\Id)=1,\qquad \tau(x^*x)=0 \ \Rightarrow\ x=0,\qquad \text{and}\qquad
	\tau(xy)=\tau(yx)\ \ \text{for all }x,y\in\A.
	\]
We also use the associated noncommutative $L^2$-norm
$\|x\|_{2,\tau}:=\tau(x^*x)^{1/2},$
whenever $\tau$ is understood from context.
	We write $\Apos$ for the positive cone of $\A$, and $\Ainvpos$ for its invertible positive cone. We also write
	\[
	\mathcal S_\tau:=\{\rho\in\A_{+}:\tau(\rho)=1\},\qquad
	\mathcal S_{\tau,++}:=\mathcal S_\tau\cap \A_{++}.
	\]

	Let $(\M,\tau)$ be a finite von Neumann algebra with a faithful normal tracial state $\tau$,
	and denote by $\M^\times$ the group of invertible elements of $\M$.
	Write $\M_{\mathrm{sa}}=\{x\in\M:\ x^*=x\}$, and denote by $\Mpos$ and $\Minvpos$
	the positive cone and the invertible positive cone of $\M$, respectively.
	
	\subsection{The $S$-divergence}
	
The $S$-divergence was introduced by Sra \cite{Sra16} in the study of the open convex cone
$M_n^{++}(\CC)$.
In contrast, the affine-invariant Riemannian distance
\[
\delta_R(A,B)=\bigl\|\log\bigl(B^{-1/2}AB^{-1/2}\bigr)\bigr\|_F,
\qquad A,B\in M_n^{++}(\CC),
\]
where $\|\cdot\|_F$ denotes the Frobenius norm, arises from the nonpositively curved
Riemannian geometry of $M_n^{++}(\CC)$ (see, e.g., \cite[Ch.~10]{BH99} and \cite[Ch.~6]{Bha07}).
However, outside this specific geometric framework, there is in general no canonical
“natural” choice of distance on $M_n^{++}(\CC)$.

	Motivated by ideas from convex optimization and information geometry \cite{BV04,CSBP11},
	Sra proposed the symmetric trace-logarithmic expression
	\begin{equation}\label{eq:sra}
		\delta_S^2(A,B)
		=\log\det\!\Bigl(\frac{A+B}{2}\Bigr)-\frac12\log\det A-\frac12\log\det B,
		\qquad A,B\in M_n^{++}(\CC),
	\end{equation}
	also known as the Jensen--Bregman LogDet divergence (or symmetric Stein divergence);
	see \cite{CCR08,CM12,Sra16}. A key feature proved in \cite{Sra16} is that
	$\delta_S:=\sqrt{\delta_S^2}$ satisfies the triangle inequality on $M_n^{++}(\CC)$, hence defines a genuine metric.
	
	\begin{thm}[Sra]\label{thm:Sra}
		Let $A,B\in M_n^{++}(\CC)$. Define $\delta_S$ by \eqref{eq:sra}. Then $\delta_S$ is a metric on $M_n^{++}(\CC)$.
		Moreover, it does not admit any Hilbert space embedding for any $n\ge 2$.
	\end{thm}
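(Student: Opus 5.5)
The metric property will follow from a kernel argument, and the non-embeddability from an explicit finite configuration.

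\textbf{Metric property.} Symmetry of $\delta_S^2$ is built into \eqref{eq:sra}. Nonnegativity, with equality only when $A=B$, comes from strict concavity of $\log\det$ on $M_n^{++}(\CC)$, or equivalently from the matrix AM--GM inequality $\det\frac{A+B}{2}\ge \sqrt{\det A\det B}$, which is strict unless $A=B$.

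For the triangle inequality, first reduce to commuting matrices, and then to the scalar case. By congruence invariance, $\delta_S(A,B)=\delta_S(X^*AX,X^*BX)$ for invertible $X$, so any pair may be brought to the form $(I,D)$ with $D$ diagonal. For diagonal arguments the divergence is additive:
\[
\delta_S^2=\sum_i \Bigl(\log\tfrac{a_i+b_i}{2}-\tfrac12\log a_i-\tfrac12\log b_i\Bigr).
\]
Congruence alone cannot simultaneously diagonalize three matrices $A,B,C$, so I would not rely on it for the triangle inequality. Instead I would follow Sra's route and show that $\delta_S^2$ is conditionally negative definite on sets that are simultaneously diagonalizable. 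For scalars, write
\[
\log\tfrac{a+b}{2}-\tfrac12\log a-\tfrac12\log b=-\log\frac{\sqrt{ab}}{(a+b)/2}.
\]
The kernel $(a,b)\mapsto \frac{1}{a+b}$ is positive definite on $(0,\infty)$, being the Laplace transform $\int_0^\infty e^{-ta}e^{-tb}\,dt$. Hence $\frac{2\sqrt{ab}}{a+b}$ is positive definite with ones on the diagonal, and $(a+b)^{-\beta}$ is positive definite for every $\beta>0$ (the Gamma-integral $\int_0^\infty t^{\beta-1}e^{-t(a+b)}\,dt$). So $e^{-\beta\delta_S^2}$ is positive definite for all $\beta>0$, and by Schoenberg's theorem $\delta_S^2$ is conditionally negative definite. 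Therefore $\delta_S$ is Hilbertian, in particular a metric, on commuting families.

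For general $A,B,C$ I would follow Sra. Using congruence, take $B=I$. The goal is $\delta_S(A,C)\le \delta_S(A,I)+\delta_S(I,C)$. Rewrite
\[
\delta_S^2(A,C)=\log\det\tfrac{A+C}{2}-\tfrac12\log\det A-\tfrac12\log\det C,
\]
and express it through the eigenvalues of $A^{-1/2}CA^{-1/2}$, comparing with the eigenvalues $\lambda(A)$ and $\lambda(C)$. The key tool would be a majorization or log-majorization statement, namely Lidskii-type inequalities for $\log$ eigenvalues (Gel'fand--Naimark). Combined with convexity and monotonicity of the scalar function $f(x)=\log\frac{1+e^x}{2}-\frac{x}{2}$, this should show
\[
\delta_S(A,C)\le \delta_S^{\mathrm{diag}}\bigl(\lambda^\downarrow(A),\lambda^\uparrow(C)\bigr)\text{-type bound},
\]
which reduces the problem to the commutative case already handled. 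I expect this reduction to be the main obstacle. If it resists, the fallback is Sra's original trick: prove the scalar metric property, then use that $\delta_S(A,C)$ is attained or bounded along the eigenvalue vectors via Lidskii's theorem applied to $\log(A^{-1/2}CA^{-1/2})$ versus $\log A-\log C$.

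\textbf{Non-embeddability for $n\ge2$.} By Schoenberg, a Hilbert embedding exists if and only if $\delta_S^2$ is conditionally negative definite on all of $M_2^{++}(\CC)$. The case $n\ge2$ reduces to $n=2$ via the block embedding $A\mapsto A\oplus I_{n-2}$, which preserves $\delta_S$. It then suffices to exhibit finitely many $2\times2$ positive definite matrices $A_1,\dots,A_m$ and real weights $c_i$ with $\sum_i c_i=0$ such that $\sum_{i,j} c_ic_j\,\delta_S^2(A_i,A_j)>0$. I would try a few noncommuting matrices, for example $I$ together with rank-one perturbations $I+t\,vv^*$ for several directions $v$, and do the numerical or explicit check. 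The non-commutativity is essential here, since commuting families embed.
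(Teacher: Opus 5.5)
The metric half of your proposal is sound, but it runs differently from the paper's. You put $I$ in the \emph{middle} (congruence by $B^{-1/2}$), so you need an \emph{upper} bound on $\delta_S(A,C)$, and the bound you guess is the right one. Gel'fand--Naimark applied to $C^{1/2}A^{-1/2}$ gives $\log\lambda(A^{-1/2}CA^{-1/2})\prec\log\lambda^\downarrow(C)-\log\lambda^\uparrow(A)$. This is a genuine majorization, because both sides sum to $\log\det C-\log\det A$. Hence convexity of $f(x)=\log\cosh(x/2)$ alone gives $\delta_S(A,C)^2\le\sum_i\delta_s(\lambda_i^\uparrow(A),\lambda_i^\downarrow(C))^2$; note that $f$ is even, not monotone, and monotonicity is not needed. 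The diagonal triangle inequality through $I$ then finishes.

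The paper obtains the metric property from Theorem~\ref{thm:main-tracelog-Cstar} with $\tau=\frac1n\Tr$, so that $d_\tau=\delta_S/\sqrt n$. It places $\Id$ at an \emph{endpoint}, $d_\tau(\Id,T)\le d_\tau(\Id,S)+d_\tau(S,T)$. This needs only the same-ordering \emph{lower} bound $d_\tau(S,T)^2\ge\int_0^1\delta_s(\mu_S(t),\mu_T(t))^2\,\dd t$, which is exactly what the Fack--Kosaki inequality supplies in a finite von Neumann algebra. Your route relies instead on the anti-ordered log-majorization, which is classical for matrices but not readily available in that setting.

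The genuine gap is the second assertion. Your reduction to $n=2$ via $A\mapsto A\oplus I_{n-2}$ is correct. Beyond that, you only propose to search for a violating configuration, and you give no reason why perturbations $I+t\,vv^*$ should make $\sum_{i,j}c_ic_j\,\delta_S^2(A_i,A_j)$ positive, so nothing is proved. The paper itself does not prove this part; it is quoted from Sra.

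One way to close it: $e^{-\beta\delta_S^2(A,B)}=2^{n\beta}\det(A)^{\beta/2}\det(B)^{\beta/2}\det(A+B)^{-\beta}$, and multiplying a kernel by $g(A)g(B)$ with $g>0$ does not affect positive definiteness. So by Lemmas~\ref{lem:cnd-exp} and~\ref{lem:cnd-embed} it suffices to find one $\beta>0$ for which $(A,B)\mapsto\det(A+B)^{-\beta}$ is not positive definite on $M_2^{++}(\CC)$. By Gindikin's theorem, this kernel is positive definite only when $\beta$ lies in the Gindikin (Wallach) set. For $n\ge2$ that set excludes every $\beta\in(0,\tfrac12)$, already on real symmetric matrices. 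Otherwise you must actually exhibit points and integer weights with a certified positive value, as the paper does for $J_\eta$ in Example~\ref{ex:explicit-integer}.
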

	
	Besides its intrinsic geometric interest, $\delta_S$ is computationally attractive and has been used effectively
	in applications such as similarity search for covariance descriptors \cite{CSBP11}, which further motivates the study
	of trace-logarithmic metrics beyond finite dimensions.
	
	There is a substantial literature extending log-determinant divergences to infinite-dimensional operator settings.
	For example, Minh \cite{Min17,Min20} developed infinite-dimensional LogDet divergences for unitized trace-class and Hilbert--Schmidt perturbations
	via extended Fredholm-type determinants. From the viewpoint of finite von Neumann algebras, the
	Fuglede--Kadison determinant $\Delta$ \cite{FK52} provides a canonical replacement for $\det$, and ``distance-like'' functionals
	built from $\Delta$ and functional calculus have been considered in connection with isometry problems (see, e.g.,
	Ga\'al--Nagy--Szokol \cite{GNS19}).
	
	In the matrix case, $\log\det X=\Tr(\log X)$, suggesting a natural extension to tracial $C^*$-algebras and finite von Neumann algebras
	by replacing $\Tr$ with a faithful tracial state.
	This raises the question of whether the $S$-divergence remains a metric in the $C^*$-algebraic setting.
	In this paper, we answer this in the affirmative.
	
	\begin{thm}\label{thm:main-tracelog-Cstar}
		Let $(\A,\tau)$ be a unital $C^*$-algebra with a faithful tracial state. For $A,B\in\A_{++}$ define
		\begin{equation}\label{eq:d_tau}
			d^2_\tau(A,B)
			:=\tau\!\left(\log\!\left(\frac{A+B}{2}\right)\right)-\frac12\tau(\log A)-\frac12\tau(\log B).
		\end{equation}
		Then $d_\tau(A,B)$ is a metric on $\A_{++}$.
	In particular, its restriction to $\mathcal S_{\tau,++}$ is a metric.
	\end{thm}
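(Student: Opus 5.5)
The plan is to write the trace-log divergence as an integral of Jensen gaps for the resolvent functions. Then prove the triangle inequality pointwise in the integration parameter, and assemble with Minkowski's inequality. The other metric axioms are routine:
\begin{itemize}
\item symmetry is obvious;
\item $d_\tau^2\ge 0$ follows from operator concavity of $\log$ and positivity of $\tau$;
\item if $d_\tau(A,B)=0$, then operator concavity gives $\log\frac{A+B}{2}-\frac12\log A-\frac12\log B\ge 0$, and faithfulness of $\tau$ forces this operator to vanish. Strict concavity of $\log$ (or the resolvent formula below, which gives $\tau\big((B+t)^{-1}D(A+t)^{-1}D((A+B)/2+t)^{-1}\big)=0$ with $D=\frac{A-B}{2}$) then yields $D=0$, i.e.\ $A=B$.
\end{itemize}

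\emph{Step 1 (integral representation).} For $x>0$ we have
\[
\log x=\int_0^\infty\Bigl(\frac1{1+t}-\frac1{x+t}\Bigr)\dd t .
\]
Applying this via continuous functional calculus on $\A_{++}$, and interchanging $\tau$ with the integral (the integrand is norm-continuous in $t$ and $O(t^{-2})$ uniformly on the spectra), we get
\[
d_\tau^2(A,B)=\int_0^\infty J_t(A,B)\dd t,\qquad
J_t(A,B):=\tau\!\Bigl(\tfrac12(A+t)^{-1}+\tfrac12(B+t)^{-1}-\bigl(\tfrac{A+B}2+t\bigr)^{-1}\Bigr).
\]
Each $J_t\ge 0$ by operator convexity of $x\mapsto (x+t)^{-1}$. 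Writing $A'=A+t$, $B'=B+t$, $M=\frac{A'+B'}2$ and $D=\frac{A-B}2$, a direct resolvent computation gives
\[
J_t(A,B)=\tau\big(B'^{-1}DA'^{-1}DM^{-1}\big).
\]
This formula will be the working expression.

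\emph{Step 2 (reduction).} If each $\sqrt{J_t}$ satisfies the triangle inequality on $\A_{++}$, then Minkowski's inequality in $L^2(\dd t)$ gives
\[
d_\tau(A,C)=\Bigl\|\sqrt{J_t(A,C)}\Bigr\|_{L^2}\le\Bigl\|\sqrt{J_t(A,B)}+\sqrt{J_t(B,C)}\Bigr\|_{L^2}\le d_\tau(A,B)+d_\tau(B,C).
\]
The restriction to $\mathcal S_{\tau,++}$ is then immediate. So everything reduces to showing that the trace Jensen gap of the single operator convex function $f_t(x)=(x+t)^{-1}$ has a square root satisfying the triangle inequality. By translation $A\mapsto A+t$, it suffices to treat $t=0$ on $\A_{++}$.

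\emph{Step 3 (main obstacle).} The hard step is the triangle inequality for $\sqrt{J}$, where $J(A,B)=\tau\bigl(\frac12A^{-1}+\frac12B^{-1}-(\frac{A+B}2)^{-1}\bigr)$. There is no joint diagonalization of three operators, so one cannot just invoke Sra's Theorem~\ref{thm:Sra} or the scalar case.

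What I would try first is a variational/Hilbert-module description of the harmonic-type term. The bound
\[
\tau(M^{-1})\ge 2\tau(Y)-\tau(Y M Y)\quad\text{for self-adjoint } Y,
\]
with equality at $Y=M^{-1}$, yields
\[
2J(A,B)=\inf_{Y}\Bigl[\tau\bigl((A^{-1/2}-A^{1/2}Y)^2\bigr)\ \text{-type terms in }A\text{ and }B\Bigr],
\]
i.e.\ $J(A,B)$ is an infimum over $Y$ of $\frac14\bigl(\|A^{1/2}(A^{-1}-Y)\|_{2,\tau}^2+\|B^{1/2}(B^{-1}-Y)\|_{2,\tau}^2\bigr)$. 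An expression of this ``half of an infimal sum of two squared distances to a common point'' form can be handled with the standard gluing argument: choose near-optimal $Y_1$ for $(A,B)$ and $Y_2$ for $(B,C)$, and use $Y$ built from $Y_1,Y_2$ as a competitor for $(A,C)$. Equivalently, one may realize $J$ through a positive-definite-kernel/Stieltjes representation of the derivative of $f_t$, which is the route suggested by the paper's later Nevanlinna--Stieltjes approach.

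If the gluing gives only a weaker inequality, my fallback is to prove that $J$ is a negative-type kernel on each commutative $C^*$-subalgebra. I would then extend using the $L^2(\tau)$ structure via the GNS representation, where $\|\cdot\|_{2,\tau}$ provides the Hilbert space needed to embed $\sqrt{J}$.
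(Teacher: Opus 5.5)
\textbf{There is a genuine gap: Step~3 is never proved, and none of the tools you propose for it can work.}

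Your Steps 1--2 are sound. That covers the resolvent representation, $J_t\ge 0$, the formula $J_t=\tau(B'^{-1}DA'^{-1}DM^{-1})$, and the Minkowski assembly. But they replace the theorem by a claim that implies it: the triangle inequality for $\sqrt{J_t}$ on \emph{arbitrary} triples in $\A_{++}$. Step~3 is therefore the entire proof.

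\emph{The variational formula.} It is just the generic Jensen--Bregman identity $J_F(a,b)=\frac12\inf_z\bigl[D_F(a,z)+D_F(b,z)\bigr]$ for $F=\tau(\cdot^{-1})$, with $z=Y^{-1}$. This identity holds for every convex $F$. Yet for the scalar convex function $F(x)=x^4$ one has $J_F(1,3)=25$ while $\sqrt{J_F(1,2)}+\sqrt{J_F(2,3)}=(\sqrt{55}+\sqrt{151})/4<5$. So a gluing argument that uses only this infimal structure would prove a false statement. You do not say what special feature of $x^{-1}$ would rescue it. The weights $A^{1/2}$, $B^{1/2}$, $C^{1/2}$ differ, so there is no common metric in which to glue $Y_1$ and $Y_2$.

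\emph{The negative-type fallback.} This is impossible outright. Suppose $J_0$ were of negative type on $M_n^{++}(\CC)$ with $n\ge 2$. Then so would be each $J_t(A,B)=J_0(A+t\Id,B+t\Id)$, and hence $d_\tau^2=\int_0^\infty J_t\,\dd t$. So $d_\tau$ would be Hilbertian, contradicting Sra's non-embeddability result (Theorem~\ref{thm:Sra}). Negative type on commutative subalgebras does not transfer, because three points need not commute.

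\emph{The Nevanlinna--Stieltjes route.} The approach of Section~\ref{sec:jensen-generators} writes Jensen gaps in terms of shifted $d_\tau$'s. It therefore presupposes the very theorem you are proving.

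\textbf{The missing idea} is to use a symmetry that $d_\tau$ has and the individual $J_t$ lack: congruence invariance $d_\tau(S^*AS,S^*BS)=d_\tau(A,B)$. This comes from multiplicativity of the Fuglede--Kadison determinant on the von Neumann envelope $\pi_\tau(\A)''$.
\begin{enumerate}
\item Taking $S=A^{-1/2}$ reduces everything to $d_\tau(\Id,T)\le d_\tau(\Id,S)+d_\tau(S,T)$. Now one of the three points commutes with everything.
\item The remaining noncommuting pair needs only a one-sided rearrangement bound. The paper gets it from the Fack--Kosaki inequality applied to the convex increasing function $-\log s+s/m$. The linear part cancels because $\tau(S+T)=\int_0^1(\mu_S+\mu_T)\,\dd t$. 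This gives $d_\tau(S,T)\ge\|\delta_s(\mu_S,\mu_T)\|_{L^2(0,1)}$.
\item The identity $d_\tau(\Id,X)=\|\delta_s(1,\mu_X)\|_{L^2(0,1)}$ holds exactly.
\item Scalar metricity of $\delta_s$ and Minkowski in $L^2(0,1)$ finish the proof.
\end{enumerate}

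Your decomposition can be salvaged this way. After the reduction to base point $\Id$, run the same Fack--Kosaki argument with $(x+t)^{-1}+cx$ for large $c$. This yields $\sqrt{J_t(\Id,T)}\le\sqrt{J_t(\Id,S)}+\sqrt{J_t(S,T)}$ for each $t$, since the scalar $\sqrt{j_t}$ is Hilbertian by a Laplace representation. Your Minkowski step in $t$ then goes through. The key point is that the congruence reduction must be applied to $d_\tau$ before you decompose, not to each $J_t$.
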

	We call $d_\tau$ the \emph{trace-log distance} (or \emph{$S$-divergence metric}) associated with $(\A,\tau)$.
	
	\subsection{Quantum Jensen--Shannon divergence}
	
	Let $\eta(x)=x\log x$ for $x>0$ and $\eta(0)=0$ (continuous extension).
	The quantum Jensen--Shannon divergence (QJSD) is a quantum analogue of the classical Jensen--Shannon divergence,
	systematically introduced by Majtey--Lamberti--Prato \cite{MLP05}. A standard matrix form is
	\begin{equation}\label{eq:qjsd-def-matrix}
		J_{\eta}(A,B)
		:=\frac12\Tr(\eta(A))+\frac12\Tr(\eta(B))-\Tr\!\left(\eta\!\left(\frac{A+B}{2}\right)\right),
		\qquad A,B\in M_n^{+}(\CC).
	\end{equation}
	The QJSD is nonnegative, symmetric, and bounded; compared with the quantum relative entropy it is often regarded as smoother in applications.
	It has found broad applications beyond quantum theory~\cite{DLH11,RPJB16,RFZ10},
	including complex network theory~\cite{DNAL15,DPA15}, pattern recognition~\cite{BRTH15},
	graph theory~\cite{RTHW13}, and chemical physics~\cite{AAL09}.
	
Virosztek \cite{Vir21} proved that the square root of the quantum Jensen--Shannon divergence defines a genuine metric on the full cone of positive matrices, in every dimension.

\begin{thm}[Virosztek]\label{thm:Virosztek}
	Let $A,B\in M_n^{+}(\CC)$. Define $J_\eta(A,B)$ by \eqref{eq:qjsd-def-matrix}.
	Then $\sqrt{J_{\eta}(A,B)}$ is a metric on $M_n^{+}(\CC)$.
\end{thm}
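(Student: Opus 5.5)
The plan is to derive Theorem~\ref{thm:Virosztek} from Sra's Theorem~\ref{thm:Sra} (or its tracial version, Theorem~\ref{thm:main-tracelog-Cstar}) through an integral representation of $J_\eta$ in terms of shifted trace-log distances. I will first establish, for $A,B\in M_n^{+}(\CC)$,
\begin{equation*}
J_\eta(A,B)=\int_0^\infty \delta_S^2(A+t\Id,\,B+t\Id)\dd t .
\end{equation*}
The scalar identity behind this is
\[
\eta(x)=x\log x=\int_0^\infty\Bigl(\frac{x}{1+t}-\log(x+t)+\log t\Bigr)\dd t + c\,x
\]
for a suitable normalisation; one checks it by differentiating twice in $x$, which gives $\eta''(x)=1/x=\int_0^\infty (x+t)^{-2}\dd t$. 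Applying it through the functional calculus and taking traces, the terms that are affine in $x$ and the constants cancel in the Jensen combination $\tfrac12 f(A)+\tfrac12 f(B)-f(\tfrac{A+B}{2})$. Since $(A+t\Id)+(B+t\Id)=2(\tfrac{A+B}{2}+t\Id)$, the integrand is exactly $\delta_S^2(A+t,B+t)$. For $t>0$ the shifted matrices are positive definite, so $\delta_S^2(A+t,B+t)$ is well defined even when $A,B$ are singular.

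Next I will check that the integral converges. Near $t=0$ the integrand has at most logarithmic singularities, which are integrable; this uses $\eta(0)=0$ on the kernel. As $t\to\infty$ a Taylor expansion gives $\delta_S^2(A+t,B+t)=O(t^{-2})$, because the second-order term is $\tfrac18 t^{-2}\Tr(A-B)^2$. If one prefers to avoid the boundary issue, one can prove the identity for $A,B>0$ and extend to $M_n^+$ by continuity together with monotone or dominated convergence.

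The triangle inequality then follows from Minkowski's inequality in $L^2(\dd t)$. Set $f_{A,B}(t)=\delta_S(A+t\Id,B+t\Id)$. By Theorem~\ref{thm:Sra}, pointwise in $t$ we have $f_{A,C}\le f_{A,B}+f_{B,C}$, and hence
\[
\sqrt{J_\eta(A,C)}=\|f_{A,C}\|_{L^2}\le\|f_{A,B}\|_{L^2}+\|f_{B,C}\|_{L^2}.
\]
Symmetry is immediate from the definition. For definiteness: if $J_\eta(A,B)=0$, then $f_{A,B}(t)=0$ for almost every $t$, so $A+t=B+t$ for some $t$, giving $A=B$. Nonnegativity is also clear from the representation.

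The main obstacle I expect is getting the scalar integral representation exactly right, that is, choosing normalising terms so that the integral converges and the affine parts cancel. The cleanest route is to verify that the integrand of the Jensen gap, namely $\tfrac12 g_t(a)+\tfrac12 g_t(b)-g_t(\tfrac{a+b}{2})$ with $g_t(x)=-\log(x+t)$, integrates to the $\eta$-Jensen gap. I would do this by differentiating in $a$ and $b$ and comparing second derivatives, and then lift the result to matrices by linearity of the trace and the functional calculus.
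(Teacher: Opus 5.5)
Your proposal is correct and follows essentially the paper's route (Proposition~\ref{prop:qjsd-integral} and Proposition~\ref{prop:qjsd-metric-vN}, specialized to $\tau=\frac1n\Tr$). Like the paper, you write $J_\eta(A,B)=\int_0^\infty \delta_S^2(A+t\Id,B+t\Id)\dd t$, apply the triangle inequality for the shifted trace-log distance pointwise in $t$, and conclude with Minkowski in $L^2(\dd t)$. The one difference is in how the integral identity is obtained: you lift a scalar representation of $\eta$ (via $\eta''(x)=\int_0^\infty (x+t)^{-2}\dd t$) through the functional calculus, whereas the paper differentiates $F(t)=J(A+t\Id,B+t\Id)$ to get $F'(t)=-d_{\tau,t}(A,B)^2$ and handles the endpoints with Lemma~\ref{lem:qjsd-vanish} and continuity at $t=0$.
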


Having established metricity, Virosztek  \cite{Vir21} further asked whether this metric is induced by a Hilbert space embedding.

\begin{prob}[Virosztek]\label{prob:Virosztek_1}
	Is the metric induced by the quantum Jensen--Shannon divergence Hilbertian on $M_n^+(\CC)$ for $n\ge 2$?
\end{prob}

A related question \cite{Vir21} concerns the physically important subset of density matrices.

\begin{prob}[Virosztek]\label{prob:Virosztek_2}
	Is the QJSD metric Hilbertian on the space of $n\times n$ density matrices for $n\ge 3$?
\end{prob}

Our next theorem answers both questions in the negative.

\begin{thm}\label{thm:main-not-hilbertian}
	Let $\sqrt{J_{\eta}}$ be the QJSD metric on $M_n^{+}(\CC)$ defined by \eqref{eq:qjsd-def-matrix}.
	\begin{enumerate}[label=\textup{(\arabic*)}, leftmargin=2em]
		\item If $n\ge 2$, then $\sqrt{J_{\eta}}$ is not Hilbertian on $M_n^{++}(\CC)$.
		Consequently, it is not Hilbertian on $M_n^{+}(\CC)$.
		\item If $n\ge 3$, then $\sqrt{J_{\eta}}$ is not Hilbertian on the density matrices
	$
		\mathcal D_n:=\{\rho\in M_n^{+}(\CC):\Tr(\rho)=1\}.
	$
	\end{enumerate}
\end{thm}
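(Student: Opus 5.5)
The plan is to use Schoenberg's criterion. A metric $\sqrt{J}$ on a set $X$ is Hilbertian if and only if $J$ is conditionally negative definite on $X$. That is, for all finite families $x_1,\dots,x_m\in X$ and all real $c_1,\dots,c_m$ with $\sum_i c_i=0$ one has $\sum_{i,j}c_ic_j J(x_i,x_j)\le 0$. To disprove Hilbertianity it therefore suffices to exhibit one finite configuration and one coefficient vector with $\sum_i c_i=0$ and $\sum_{i,j}c_ic_jJ_\eta(x_i,x_j)>0$. Two structural remarks guide where to look.
\begin{itemize}
\item On commuting families, $J_\eta$ reduces to the classical Jensen--Shannon divergence, which is known to be Hilbertian. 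A counterexample must therefore involve noncommuting points.
\item A second-order expansion around a fixed point only produces a Euclidean quadratic form, so purely local configurations will not work either. We need genuinely "large" noncommutative configurations.
\end{itemize}

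For part (2) I would work with pure states $P_u=uu^*$, $\|u\|=1$. The matrix $\tfrac12(P_u+P_v)$ has nonzero eigenvalues $\tfrac{1\pm c}{2}$ with $c=|\langle u,v\rangle|$. Hence
\[
J_\eta(P_u,P_v)=h(c):=-\tfrac{1+c}{2}\log\tfrac{1+c}{2}-\tfrac{1-c}{2}\log\tfrac{1-c}{2},
\]
so the Gram-type matrix $[J_\eta(P_{u_i},P_{u_j})]$ depends only on the overlaps $|\langle u_i,u_j\rangle|$. In $\CC^3$ I would take a highly symmetric configuration, for instance:
\begin{itemize}
\item the standard basis together with a SIC-POVM (nine vectors with $|\langle u_i,u_j\rangle|^2=1/4$), or
\item a union of mutually unbiased bases (overlaps $0$ and $1/\sqrt3$).
\end{itemize}
The matrix $[h(|\langle u_i,u_j\rangle|)]$ then has only two or three distinct off-diagonal values and a large symmetry group. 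Its restriction to the hyperplane $\sum c_i=0$ can be diagonalized by hand using the group structure. One then checks that some eigenvalue on that hyperplane is strictly positive, and this eigenvalue is an explicit expression in $h(0)=\log 2$, $h(1/2)$, $h(1/\sqrt3)$. For $n>3$ the same vectors embedded in $\CC^n$ give the same overlaps, so the counterexample persists. The restriction $n\ge3$ is consistent with the known Hilbertianity on qubit density matrices.

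For part (1) with $n=2$, I would exploit the degree-one homogeneity $J_\eta(tA,tB)=tJ_\eta(A,B)$. This holds because $\eta(tx)=t\eta(x)+tx\log t$ and the linear terms cancel. Using scaled pure states $t_iP_{u_i}$ in $M_2^+(\CC)$ gives an extra degree of freedom not available on the qubit state space. The eigenvalues of $\tfrac12(sP_u+tP_v)$ are explicit roots of a quadratic in $s,t,c$. I would search, first numerically and then with rigorous bounds, for a small configuration, for example Bloch vectors forming a regular polygon or tetrahedron with two different scales, that violates conditional negative definiteness. Then I pass from $M_n^+$ to $M_n^{++}$ by replacing each $x_i$ with $x_i+\delta\Id$. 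Since $J_\eta$ is continuous on $M_n^+(\CC)$, the strict inequality survives for small $\delta>0$. Embedding $2\times2$ blocks via $x\mapsto x\oplus \Id_{n-2}$ handles $n\ge2$, because the extra identity block contributes $0$ to $J_\eta$. Non-Hilbertianity on $M_n^+(\CC)$ then follows trivially, since it contains $M_n^{++}(\CC)$.

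The main obstacle is producing an explicit certificate, meaning a configuration plus coefficients whose quadratic form is provably positive. Transcendental values like $h(1/\sqrt3)$ or the eigenvalues for scaled states make an exact sign check delicate. I would first search numerically for configurations maximizing the top eigenvalue of the centered matrix $-\tfrac12 HJH$, with $H$ the centering projection, and with a negative sign convention chosen so that a positive value certifies failure. I would then pick the most symmetric near-optimal candidate and verify the sign with elementary explicit bounds on the logarithms, using enough decimal accuracy to be rigorous.
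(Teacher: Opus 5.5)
\paragraph{A genuine gap: both proposed witness families are Hilbertian.}
Your outer reduction matches the paper's: Schoenberg's criterion, then moving a $2\times2$ witness into $M_n^{++}(\CC)$ by $X\mapsto X\oplus\Id_{n-2}$. The problem is that the configurations you plan to search can never give a violation, because $J_\eta$ is conditionally negative definite on the set of \emph{all} positive matrices of rank at most one, in every dimension.

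For pure states this follows from your own formula:
\[
h(c)=\log 2-\frac12\sum_{k\ge 1}\frac{c^{2k}}{k(2k-1)},\qquad c^2=\Tr(P_uP_v).
\]
Here $\Tr(P_uP_v)$ is a Gram kernel, so the Schur product theorem gives $\sum_{i,j}c_ic_j\,h(|\langle u_i,u_j\rangle|)\le 0$ whenever $\sum_ic_i=0$. This is the Bri\"et--Harremo\"es theorem that QJSD is Hilbertian on pure states. So no SIC-POVM or MUB configuration, in $\CC^3$ or in any $\CC^n$, can give a positive value.

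Rescaled pure states $sP_u$ fail as well. By Proposition~\ref{prop:qjsd-integral} it suffices to look at $d_{\tau,t}^2$. Take $W=[\sqrt s\,u,\sqrt{s'}\,v]$ and use Sylvester's identity $\det(t\Id_n+\tfrac12WW^*)=t^n\det(\Id_2+\tfrac1{2t}W^*W)$. For $\tau=\frac1n\Tr$ this gives
\[
n\,d_{\tau,t}(sP_u,s'P_v)^2=\log\bigl(\cosh\alpha\cosh\beta-\Tr(P_uP_v)\sinh\alpha\sinh\beta\bigr),\qquad \alpha=\tfrac12\log(1+s/t),\ \beta=\tfrac12\log(1+s'/t).
\]
This is $\log\cosh$ of a hyperbolic distance $d_{\mathbb H}$ on the hyperboloid over the real Hilbert space of Hermitian matrices with inner product $\Tr(XY)$, in which $P_u$ is a unit vector. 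For each $\gamma>0$,
\[
(\cosh d_{\mathbb H})^{-\gamma}=(\cosh\alpha\cosh\beta)^{-\gamma}(1-z)^{-\gamma},\qquad z=\tanh\alpha\tanh\beta\,\Tr(P_uP_v)\in[0,1),
\]
where $z$ is a Gram kernel and $(1-z)^{-\gamma}$ has nonnegative Taylor coefficients. Hence $(\cosh d_{\mathbb H})^{-\gamma}$ is positive definite. Lemma~\ref{lem:cnd-exp} then makes each $d_{\tau,t}^2$ CND on rank-one matrices, and integrating over $t$ makes $J_{\tau,\etaop}$ CND there too. Your $\delta\Id$ perturbation only transports a \emph{strict} violation, so it cannot rescue these families.

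\paragraph{What is missing, and how the paper does it.}
The witness must contain genuinely mixed, non-commuting points (full rank in $M_2$), since both the commuting regime and the rank-one regime are Hilbertian.

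The paper takes five positive definite integer matrices $X_1,\dots,X_5\in M_2^{++}(\CC)$ with $c=(-10,10,10,-20,10)$. It certifies $S_2>0$ by directed-rounding interval arithmetic, using closed-form $2\times2$ eigenvalues.

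For part (2) it does not look for a new configuration in $\CC^3$. It sets $\rho_i=\frac1T X_i\oplus\bigl(1-\Tr(X_i)/T\bigr)\in\mathcal D_3$ with $T=40$ and certifies $S_3>0$. This works because $J_\eta$ is additive over blocks and homogeneous of degree one. The $2\times2$ blocks therefore contribute $T^{-1}$ times the positive $M_2$ value. The commuting scalar corner contributes only a nonpositive term of order $T^{-2}$.

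This is why $n\ge3$ appears: one extra dimension is needed to absorb the trace normalization. It has nothing to do with pure states in $\CC^3$.
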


In addition to these finite-dimensional results, we also show that Virosztek's metricity theorem extends to the setting of tracial $C^*$-algebras.

\begin{thm}\label{thm:qjsd-Cstar}
	Let $(\A,\tau)$ be a unital $C^*$-algebra with a faithful tracial state, and let $\eta(x)=x\log x$ with $\eta(0)=0$.
	For $A,B\in\A_{+}$ define
	\[
	J_{\tau,\eta}(A,B):=\frac12\tau(\eta(A))+\frac12\tau(\eta(B))-\tau\!\left(\eta\!\left(\frac{A+B}{2}\right)\right).
	\]
	Then the square root $\sqrt{J_{\tau,\eta}(A,B)}$ is a metric on $\A_{+}$.
	In particular, its restriction to the $\tau$-state space $\mathcal S_\tau$ is a metric.
\end{thm}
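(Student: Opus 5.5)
The plan is to reduce Theorem~\ref{thm:qjsd-Cstar} to Theorem~\ref{thm:main-tracelog-Cstar} through an integral representation of $J_{\tau,\eta}$ in terms of shifted trace-log distances, as announced in the abstract. The starting point is a scalar identity. For $x\ge 0$ and $t>0$ we have $\frac{d}{dx}\log(x+t)=\frac{1}{x+t}$. Integrating the resolvent representation of $\log$ and then of $x\log x$ gives, modulo affine terms in $x$,
\[
x\log x = \int_0^\infty \Bigl(\frac{x}{1+t} - \log\Bigl(1+\frac{x}{t}\Bigr)\Bigr)\dd t.
\]
One checks this by differentiating twice in $x$: both sides have second derivative $1/x=\int_0^\infty (x+t)^{-2}\dd t$. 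It then remains to fix the constants, and the integral converges absolutely for each $x\ge0$. Equivalently, the second divided difference of $\eta$ equals $\int_0^\infty$ of the second divided difference of $-\log(x+t)$ with $\dd t$.

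The first step is to insert this formula into $J_{\tau,\eta}$. The affine parts cancel in the Jensen combination $\frac12 f(A)+\frac12 f(B)-f(\frac{A+B}2)$. Using functional calculus and the tracial state, and exchanging $\tau$ with the integral (Fubini, justified by norm convergence of the Bochner/Riemann integral in $\A$, since $A,B$ are bounded), we get
\[
J_{\tau,\eta}(A,B)=\int_0^\infty d_\tau^2(A+t\Id,\,B+t\Id)\dd t ,\qquad A,B\in\A_+ .
\]
Indeed $\tau(\log(\frac{A+B}{2}+t))-\frac12\tau(\log(A+t))-\frac12\tau(\log(B+t))$ is exactly $d_\tau^2$ evaluated at the shifted invertible elements $A+t\Id,B+t\Id\in\A_{++}$. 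The shift by $t\Id$ is what allows $A,B$ to be merely positive, not invertible.

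We must also check integrability near $t=0$ and $t=\infty$. For large $t$, a second-order Taylor expansion gives $d_\tau^2(A+t,B+t)=O(\|A-B\|^2/t^2)$. Near $t=0$, the integrand is bounded by $\tau(\log(\|A\|+\|B\|+t))-\tau(\log t)$-type terms, which are $O(|\log t|)$ and hence integrable. Both bounds hold uniformly on bounded sets, which justifies Fubini.

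The second step applies Theorem~\ref{thm:main-tracelog-Cstar} to conclude. For each $t>0$, the map $(A,B)\mapsto d_\tau(A+t\Id,B+t\Id)$ is a pseudometric on $\A_+$, obtained by pulling back the metric $d_\tau$ along the injective map $A\mapsto A+t\Id$. An $L^2$-integral of pseudometrics has a square root that is again a pseudometric, by Minkowski's inequality in $L^2((0,\infty),\dd t)$:
\[
\sqrt{J}(A,C)=\|d_t(A,C)\|_{L^2}\le \|d_t(A,B)+d_t(B,C)\|_{L^2}\le \sqrt{J}(A,B)+\sqrt{J}(B,C).
\]
Symmetry is immediate. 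For definiteness, if $J_{\tau,\eta}(A,B)=0$ then the integrand vanishes for almost every $t$. Since $d_\tau$ is a metric, this forces $A+t\Id=B+t\Id$, hence $A=B$. Restricting to $\mathcal S_\tau$ gives the final claim.

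The main obstacle is the first step: getting the scalar identity with correct constants, and rigorously justifying the exchange of $\tau$ with the improper integral, including the $t\to0$ endpoint where $A$ may fail to be invertible. I would handle this by proving the identity for $\int_\varepsilon^R$ using norm-continuous functional calculus. I would then pass to the limit $\varepsilon\to0$, $R\to\infty$ using monotone or dominated convergence together with the bounds above and continuity of $\eta$ at $0$.
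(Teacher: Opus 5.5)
Your proposal is correct. Its architecture is the paper's: the representation $J_{\tau,\eta}(A,B)=\int_0^\infty d_\tau(A+t\Id,B+t\Id)^2\,\dd t$, followed by the pointwise triangle inequality and Minkowski in $L^2((0,\infty),\dd t)$. What differs is how you obtain the representation.

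The paper passes to the von Neumann envelope and sets $F(t)=J_{\tau,\eta}(A+t\Id,B+t\Id)$. It shows $F'(t)=-d_\tau(A+t\Id,B+t\Id)^2$ by differentiating under the spectral distribution measure. It gets $F(t)\to0$ as $t\to\infty$ from the homogeneity $J_{\tau,\eta}(cA,cB)=c\,J_{\tau,\eta}(A,B)$ and a Taylor bound, and handles $t\to0$ by continuity of $F$ and monotone convergence.

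You instead put the compensating affine terms into the scalar kernel $k_t(x)=\frac{x}{1+t}-\log(1+x/t)$. In fact $\int_0^\infty k_t(x)\,\dd t=x\log x-x$ exactly, and this converges absolutely. What your route buys:
\begin{itemize}
\item each of the three trace terms is separately an absolutely convergent integral;
\item only continuous functional calculus in $\A$ is needed, so Theorem~\ref{thm:main-tracelog-Cstar} applies directly and the GNS detour is unnecessary;
\item the argument is in the spirit of the compensated-kernel representation the paper later uses in Section~\ref{sec:jensen-generators}.
\end{itemize}
The paper's route, in exchange, never has to find the compensated kernel, since the Jensen combination absorbs the divergent affine parts at the operator level.

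Two things to tighten when you write it up. First, what identifies the limit of $\int_\varepsilon^R$ is a $t$-integrable bound on $\sup_{0\le x\le R_0}|k_t(x)|$: for example $R_0+\log(1+R_0/t)$ for $t\le1$ and $(R_0+R_0^2/2)\,t^{-2}$ for $t\ge1$. This gives uniform convergence on the spectrum and hence norm convergence of the functional calculus. Second, your separate $O(t^{-2})$ estimate for $d_\tau^2$ itself is then superfluous, since $d_\tau^2\ge0$ and monotone convergence already do that job.
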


The QJSD fits into a broader family of Jensen-type divergences generated by convex (and, in the noncommutative setting, operator convex) functions. This viewpoint will be useful both conceptually and technically in later sections.

\subsection{Quantum Jensen $f$-divergences and trace Jensen gaps}

For a continuous function \(f:[0,\infty)\to\mathbb{R}\), the associated \emph{trace Jensen gap} on \(M_n^{+}(\mathbb{C})\) is defined by
\begin{equation*}
	J_f(A,B)
	:=\frac12\,\mathrm{Tr}\bigl(f(A)\bigr)+\frac12\,\mathrm{Tr}\bigl(f(B)\bigr)
	-\mathrm{Tr}\!\left(f\!\left(\frac{A+B}{2}\right)\right),
	\qquad A,B\in M_n^{+}(\mathbb{C}),
\end{equation*}
whenever the terms are well-defined (this is automatic under bounded functional calculus).
To ensure that \(J_f\ge 0\), we additionally assume that \(f\) is operator convex and not affine (otherwise, \(J_f\equiv 0\)). 

Let $\mathcal S(\CC^2)$ denote the space of $2\times 2$ density matrices.
On the qubit state space $\mathcal S(\CC^2)$, the following strengthening was proved by Carlen--Lieb--Seiringer;
see \cite[Theorem~2]{Vir21}.

\begin{thm}[Carlen--Lieb--Seiringer]
	For an operator convex function \(f:[0,\infty)\to\mathbb{R}\), the square root of the symmetric quantum Jensen \(f\)-divergence defined by
	\[
	J_f(\rho,\sigma)
	:=\frac12\bigl(\Tr f(\rho)\bigr)+\frac12\bigl(\Tr f(\sigma)\bigr)
	-\Tr f\!\left(\frac{\rho+\sigma}{2}\right),
	\qquad (\rho,\sigma\in \mathcal S(\mathbb{C}^2)),
	\]
	is a true metric on \(\mathcal S(\mathbb{C}^2)\); moreover, it admits a Hilbert space embedding.
\end{thm}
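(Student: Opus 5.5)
The plan is to reduce to the Bloch-ball picture, decompose $f$ via its integral representation into elementary operator convex pieces, and show that each piece gives a conditionally negative definite kernel. Schoenberg's theorem then gives the Hilbert space embedding, and the metric property follows from it. Since $J_f$ is linear in $f$ and vanishes for affine $f$, I assume $f$ is not affine; otherwise $J_f\equiv 0$ is only a pseudometric.

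\emph{Step 1: decomposition of $f$.} For an operator convex $f:[0,\infty)\to\RR$ I will use the integral representation
\[
f(x)=\alpha+\beta x+\gamma x^2+\int_{(0,\infty)}\Bigl(\frac{x^2}{x+t}\Bigr)\dd\mu(t),
\]
with $\gamma\ge 0$ and $\mu$ a positive measure, suitably normalized for convergence. Affine parts contribute nothing to $J_f$. The representation therefore reduces the problem to the kernels $J_{x^2}$ and $J_{g_t}$, where $g_t(x)=x^2/(x+t)=x-t+t^2/(x+t)$. Nonnegative combinations and integrals of conditionally negative definite kernels are again conditionally negative definite. So it suffices to prove conditional negative definiteness for each of these two kernels.

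\emph{Step 2: Bloch coordinates.} Write $\rho=\tfrac12(\Id+r\cdot\vec\sigma)$ with $|r|\le1$. The eigenvalues of $\rho$ are $(1\pm|r|)/2$, and $(\rho+\sigma)/2$ has Bloch vector $(r+s)/2$.
\begin{itemize}[leftmargin=2em]
\item For $x^2$ one gets $J_{x^2}=\tfrac14\Tr(\rho-\sigma)^2=\tfrac18|r-s|^2$, which is plainly Hilbertian.
\item For $g_t$, the affine part $x-t$ drops out. A direct eigenvalue computation gives
\[
\Tr\frac{t^2}{\rho+t}=\frac{t^2(1+2t)}{a^2-|r|^2/4},\qquad a=\tfrac12+t .
\]
\end{itemize}
Up to the positive constant $t^2(1+2t)$, the kernel is therefore $\tfrac12\psi(|r|^2)+\tfrac12\psi(|s|^2)-\psi(|r+s|^2/4)$ with $\psi(v)=(a^2-v/4)^{-1}$. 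Since $|r|\le 1<2a$, I can write
\[
\psi(v)=\int_0^\infty e^{-\lambda a^2}e^{\lambda v/4}\dd\lambda .
\]
This reduces matters to the kernels $K_\lambda(r,s)=\tfrac12e^{c|r|^2}+\tfrac12e^{c|s|^2}-e^{c|r+s|^2/4}$ with $c=\lambda/4\ge0$.

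\emph{Step 3: negative definiteness of $K_\lambda$.} This is the key step. Take points $r_i$ and coefficients $c_i$ with $\sum c_i=0$. The diagonal terms cancel, leaving
\[
\sum_{i,j}c_ic_jK_\lambda(r_i,r_j)=-\sum_{i,j}\bigl(c_ie^{c|r_i|^2/4}\bigr)\bigl(c_je^{c|r_j|^2/4}\bigr)e^{c\,r_i\cdot r_j/2}.
\]
The kernel $e^{c\,r\cdot s/2}$ is positive definite, being the exponential of a positive semidefinite Gram kernel. Hence the double sum on the right is nonnegative and the whole expression is $\le0$. Integrating in $\lambda$, then in $t$ against $\mu$, and adding $\gamma J_{x^2}$ shows that $J_f$ is conditionally negative definite with zero diagonal.

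\emph{Step 4: embedding and metric.} By Schoenberg's theorem there is a map $\Phi$ into a Hilbert space with $\|\Phi(\rho)-\Phi(\sigma)\|^2=J_f(\rho,\sigma)$. This is the claimed embedding, and it gives symmetry and the triangle inequality for $\sqrt{J_f}$.

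For definiteness, suppose $J_f(\rho,\sigma)=0$. Each contribution to $J_f$ is a Jensen gap of a function that is strictly convex in the Bloch vector. Indeed, $\gamma>0$ gives the term $|r-s|^2/8$, while a nonzero $\mu$ gives strictly convex $e^{c|r|^2}$ averaged with positive weight. Since $f$ is not affine, at least one such contribution is present, so each vanishing contribution forces $r=s$, i.e.\ $\rho=\sigma$.

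The main obstacle I expect is the bookkeeping in Step 1. The integral representation must be normalized so that all terms converge near $t\to0$ and $t\to\infty$. One must also check that the subtracted affine corrections (for example $x/(1+t)$) do not affect $J_f$ and that the interchange of the $\mu$-integral with the quadratic form in Step 3 is justified.
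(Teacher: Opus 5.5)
Your proof is correct. Note that the paper itself never proves this statement: it is quoted from \cite[Theorem~2]{Vir21}. The closest argument in the paper is the proof of Theorem~\ref{thm:main-opconvex}, and your route is genuinely different from it.

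\textbf{Comparison with Theorem~\ref{thm:main-opconvex}.} There the L\"owner representation of $f'$ is integrated to give $J_{\tau,f}(A,B)=\frac b8\normtau{A-B}^2+\int t\,d_\tau(A+t\Id,B+t\Id)^2\,\dd\nu(t)$. The triangle inequality is then inherited from the trace-log metric, via Fack--Kosaki rearrangement and Minkowski. That works in every tracial algebra, but it can only give metricity, because its building block $d_\tau$ is not Hilbertian on the cone.

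You instead split $f$ into resolvent pieces and use two features special to $\mathcal S(\CC^2)$:
\begin{enumerate}
\item the spectrum of a qubit state depends only on $|r|$;
\item $\Tr(\rho+t\Id)^{-1}=2a\,\psi(|r|^2)$, where $\psi$ is the Laplace transform of a positive measure, so the midpoint kernel $\psi(|r+s|^2/4)$ factors through the positive definite kernel $e^{c\,r\cdot s/2}$.
\end{enumerate}
This factorization is exactly what produces the Hilbert embedding. It also explains why the argument cannot pass to $M_2^{++}(\CC)$ or to $\mathcal D_3$, since both the unit trace and $n=2$ are used. That is consistent with Theorem~\ref{thm:main-not-hilbertian}.

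Your computation also connects directly to the paper's decomposition. Since $-t\log(x+t)=t\int_t^\infty\bigl(\frac{1}{x+s}-\frac{1}{1+s}\bigr)\dd s-t\log(1+t)$, each summand $t\,d_\tau(\rho+t\Id,\sigma+t\Id)^2$ of the paper's formula is conditionally negative definite on qubit states.

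\textbf{On the bookkeeping you flagged.} As written, $\int\frac{x^2}{x+t}\,\dd\mu(t)$ need not converge. For $f=\eta$ the matching measure in your normalization is $\dd t/t$, which diverges at $0$. Use instead the standard form
\[
f(x)=f(0)+\beta x+\gamma x^2+\int_{(0,\infty)}\Bigl(\frac{x}{1+t}-\frac{x}{x+t}\Bigr)\dd\mu(t),
\qquad \gamma\ge0,\quad \int(1+t)^{-2}\,\dd\mu(t)<\infty .
\]
\begin{itemize}
\item The non-affine part of the integrand is $\frac{t}{x+t}$, which equals $g_t(x)/t$ up to an affine term; so your measure is $\dd\mu(t)/t$.
\item On $[0,1]$ the integrand has absolute value $\frac{x(1-x)}{(1+t)(x+t)}\le \ind_{(0,1]}(t)+(1+t)^{-2}$, which is $\mu$-integrable.
\item A qubit trace is a sum over two eigenvalues in $[0,1]$. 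Hence all interchanges of $\Tr$, the finite quadratic form, and the $\mu$-integral are immediate.
\end{itemize}

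You are also right that the statement tacitly needs $f$ to be non-affine; the paper's formulation omits this hypothesis.
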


Beyond the qubit setting, however, such an embedding phenomenon cannot hold in full generality, see \cite{Vir21}.
Indeed, the cone of \(2\times 2\) positive definite matrices equipped with the Jensen divergence corresponding to the operator convex function \(x\mapsto -\log x\) is known not to admit any Hilbert space embedding (Theorem~\ref{thm:Sra}).
This indicates that the Carlen--Lieb--Seiringer theorem is optimal: one cannot, in general, go beyond the qubit state space when seeking Hilbertianity simultaneously for all operator convex generators.
(To be precise, \(x\mapsto -\log x\) is operator convex only on \((0,\infty)\) and not on \([0,\infty)\), but the integral representation
\[
\log x=\int_{0}^{\infty}\left(\frac{1}{1+t}-\frac{1}{x+t}\right)\,dt
\]
shows that, from the viewpoint of negative definite kernels, it behaves similarly to operator convex functions on \([0,\infty)\).) 

An important and challenging problem is whether the square root of the symmetric quantum Jensen $f$-divergence defines a metric on $M_n(\mathbb{C})^{+}$. For $f(x)=x\log x$, it reduces to Theorem~\ref{thm:Virosztek}.
We next provide a general mechanism showing that operator convex Jensen generators still yield metrics (without claiming Hilbertianity in general) in the tracial $C^*$-algebra setting.

\begin{thm}\label{thm:main-opconvex}
	Let $(\A,\tau)$ be a unital $C^*$-algebra with a faithful tracial state.
	Let $f:(0,\infty)\to\RR$ be an operator convex function which is not affine, and assume that $f$ extends continuously to $[0,\infty)$ with finite $f(0)$.
	For $A,B\in\A_{+}$ define
	\[
	J_{\tau,f}(A,B)
	:=\frac12\tau\bigl(f(A)\bigr)+\frac12\tau\bigl(f(B)\bigr)
	-\tau\!\left(f\!\left(\frac{A+B}{2}\right)\right).
	\]
Then the square root $\sqrt{J_{\tau,f}}$ defines a metric on $\A_{+}$.
	More precisely, there exist $b\ge 0$ and a positive Borel measure $\nu$ on $(0,\infty)$ satisfying
$
	\int_{(0,\infty)}\frac{1}{1+t}\,\dd\nu(t)<\infty
$
	such that for all $A,B\in\A_{+}$,
	\begin{equation}\label{eq:opconvex-decomp-intro}
		J_{\tau,f}(A,B)
		=\frac{b}{8}\,\normtau{A-B}^{2}
		+\int_{(0,\infty)} t\,d_\tau(A+t\Id,B+t\Id)^2\,\dd\nu(t).
	\end{equation}
\end{thm}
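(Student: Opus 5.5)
The plan is to reduce everything to Theorem~\ref{thm:main-tracelog-Cstar} by an integral decomposition of $f$ into elementary generators. The two generators are $x\mapsto x^2$ and the shifted logarithms $x\mapsto -t\log(x+t)$, $t>0$. Once \eqref{eq:opconvex-decomp-intro} is established, metricity is formal. The map $(A,B)\mapsto A-B$ makes $\sqrt{\tfrac{b}{8}}\,\normtau{A-B}$ a (pseudo)metric, and each $(A,B)\mapsto \sqrt{t}\,d_\tau(A+t\Id,B+t\Id)$ is a metric on $\A_+$ by Theorem~\ref{thm:main-tracelog-Cstar}, since $A+t\Id\in\A_{++}$. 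Hence $\sqrt{J_{\tau,f}}$ is the $L^2$-norm of the vector of these metrics over the measure space $\{0\}\sqcup(0,\infty)$, and Minkowski's inequality in $L^2(\delta_0+\nu)$ gives the triangle inequality. Symmetry is clear. For definiteness: if $J_{\tau,f}(A,B)=0$, then either $b>0$ forces $A=B$ by faithfulness, or $\nu\neq0$ and $d_\tau(A+t,B+t)=0$ for $\nu$-a.e.\ $t$ forces $A=B$. The non-affinity of $f$ guarantees that $(b,\nu)\neq(0,0)$.

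For the decomposition I would compute the Jensen gaps of the generators. For $x^2$ we get $\tfrac12A^2+\tfrac12B^2-\bigl(\tfrac{A+B}{2}\bigr)^2=\tfrac14(A-B)^2$, so applying $\tau$ gives $\tfrac14\normtau{A-B}^2$; thus the quadratic part $\tfrac b2x^2$ contributes $\tfrac b8\normtau{A-B}^2$. For $-\log(x+t)$ the gap is exactly $d_\tau(A+t\Id,B+t\Id)^2$ by \eqref{eq:d_tau}. Affine terms contribute nothing. It therefore suffices to prove the scalar representation
\[
f(x)=\alpha+\beta x+\tfrac b2x^2+\int_{(0,\infty)}t\Bigl(-\log(x+t)+\log(1+t)+\tfrac{x-1}{1+t}\Bigr)\dd\nu(t),\qquad x\ge 0,
\]
with $b\ge0$ and $\int\frac{\dd\nu(t)}{1+t}<\infty$. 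The bracket is $O((x-1)^2/(1+t)^2)$ for large $t$, so the integral converges. We then pass it through the functional calculus and $\tau$, interchanging $\tau$ and $\int\dd\nu$ by Fubini and dominated convergence (the spectra lie in a compact subset of $[0,\infty)$). The added affine corrections cancel in every Jensen gap. Continuity at $x=0$ and finiteness of $f(0)$ are used to justify the case where $A$ or $B$ is not invertible: first take $A+\varepsilon$, $B+\varepsilon$, then let $\varepsilon\downarrow0$ by monotone convergence in $t$.

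Differentiating, the scalar representation is equivalent to a Nevanlinna--Stieltjes form of the derivative,
\[
f'(x)=\beta'+bx-\int_{(0,\infty)}\Bigl(\frac{t}{x+t}-\frac{t}{1+t}\Bigr)\dd\nu(t).
\]
This is precisely the statement that $f'$ continues to a Pick function on $\CC\setminus(-\infty,0]$ whose representing measure sits on $(-\infty,0)$, with density $t\,\dd\nu(t)$ at $-t$. So the key step is to derive such a representation for $f'$ from operator convexity of $f$ on $(0,\infty)$. I would start from Löwner/Kraus: operator convexity makes the divided difference $x\mapsto (f(x)-f(1))/(x-1)$ operator monotone, hence Pick. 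I would then try to read off the boundary measure of $f'$ from that of the divided difference, using finiteness of $f(0)$ to control the mass near $t=0$ and get the stated integrability of $\nu$.

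I expect this derivative step to be the main obstacle. A direct check on $f(x)=x^2/(x+1)$ indicates that $f'$ itself need not be a Pick function. In that case the argument must be run at the level of the trace rather than pointwise. Concretely, I would fall back on writing each Löwner generator $x^2/(x+s)$ as a positive superposition of the gaps of $-t\log(x+t)$ and $x^2$ only after applying $\tau$. For this I would use $\tfrac{\partial}{\partial t}d_\tau(A+t,B+t)^2=-J_{\tau,1/(x+t)}(A,B)\le0$ and integrate by parts in $t$. Carefully verifying the positivity of the resulting measure is where I anticipate the real work.
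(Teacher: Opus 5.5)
Your reduction of metricity to \eqref{eq:opconvex-decomp-intro} is the same as the paper's and is fine: you compute the Jensen gaps of $x^2$ and $-\log(x+t)$ and then apply Minkowski in $L^2(\delta_0+\nu)$. The genuine gap is the decomposition itself, exactly where you anticipated. It cannot be closed ``at the level of the trace'', because the identity must hold for scalar pairs $A=(x-h)\Id$, $B=(x+h)\Id$ with $0<h<x$. These lie in every $(\A,\tau)$, so applying $\tau$ buys nothing.

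For such pairs the left side is $\frac12 f(x-h)+\frac12 f(x+h)-f(x)=\frac{h^2}{2}f''(x)+o(h^2)$. On the right, $d_\tau(A+t\Id,B+t\Id)^2=-\frac12\log\bigl(1-h^2/(x+t)^2\bigr)\ge h^2/\bigl(2(x+t)^2\bigr)$. Dividing by $h^2/2$ and letting $h\downarrow 0$ gives
\[
f''(x)\ \ge\ b+\int_{(0,\infty)}\frac{t}{(x+t)^2}\dd\nu(t)\qquad(x>0).
\]
Take $f(x)=(1+x)^{-1}$, which is operator convex, not affine, and has $f(0)=1$. Your $x^2/(x+1)$ differs from it by the affine function $x-1$. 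Then $f''(x)=2(1+x)^{-3}$, and letting $x\to\infty$ forces $b=0$. If $\nu\neq 0$, pick $L$ with $c:=\int_{(0,L]}t\dd\nu(t)>0$; then the right side is at least $c/(4x^2)$ for $x\ge L$, which is impossible for large $x$. Hence $b=0$ and $\nu=0$, contradicting $J_{\tau,f}(0,2\Id)=\frac16>0$.

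So for this $f$ no $b\ge 0$ and $\nu\ge 0$ work, in any $(\A,\tau)$. The ``more precisely'' part of the statement is false here. Your integration by parts cannot help either. The identity $\partial_t d_\tau(A+t\Id,B+t\Id)^2=-J_{\tau,(x+t)^{-1}}(A,B)$ integrates to $d_\tau(A+s\Id,B+s\Id)^2=\int_s^\infty J_{\tau,(x+t)^{-1}}(A,B)\dd t$. This writes the log-gaps as superpositions of resolvent gaps, not conversely; inverting it exhibits $J_{\tau,(x+s)^{-1}}$ as a $-\partial_s$ derivative, not as a positive superposition.

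The inequality above is in fact an equality, so \eqref{eq:opconvex-decomp-intro} forces $f'$ to be operator monotone. Your route therefore only covers generators such as $x\log x$ or $x^p$ with $1<p\le 2$. The paper's proof passes over the same point. Lemma~\ref{lem:nevanlinna} deduces operator monotonicity of $f'$ from that of every divided difference $g_{t_0}$, and this does not follow. For $f(x)=(1+x)^{-1}$, $g_{t_0}(t)=-\frac{1}{(1+t)(1+t_0)}$ is operator monotone in $t$, but $f'(x)=-(1+x)^{-2}$ is not.

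Even when $f'$ is operator monotone, the normalization $\int\frac{\dd\nu(t)}{1+t}<\infty$ that you adopted is too strong. For $x\log x$ the equality forces $\nu=\dd t/t$, consistent with Proposition~\ref{prop:qjsd-integral}, and this $\nu$ violates that condition.

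To reach general operator convex $f$, decompose $f$ itself rather than $f'$. Modulo affine terms, $f$ is a positive superposition of $x^2$ and the resolvents $(x+s)^{-1}$, $s>0$. Your Minkowski argument then reduces metricity of $\sqrt{J_{\tau,f}}$ to metricity of each $\sqrt{J_{\tau,(x+s)^{-1}}}$. That is the missing ingredient, and neither your proposal nor the paper supplies it. The congruence-invariance reduction that drives the proof for $d_\tau$ is not available for these generators.
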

As an immediate corollary of Theorem~\ref{thm:main-opconvex}, we have
\begin{cor}
Let $f:(0,\infty)\to\RR$ be a  operator convex function which is not affine, and assume that $f$ extends continuously to $[0,\infty)$ with finite $f(0)$.
Then the square root of the symmetric quantum Jensen $f$-divergence defined by
	\[
	J_f(A,B)
	:=\frac12\bigl(\Tr f(A)\bigr)+\frac12\bigl(\Tr f(B)\bigr)
	-\Tr f\!\left(\frac{A+B}{2}\right),
	\qquad (A,B\in  M_n^{+}(\mathbb{C})),
	\]
	is a true metric on \(M_n^{+}(\mathbb{C})\).
\end{cor}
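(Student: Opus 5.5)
The corollary should follow by specializing Theorem~\ref{thm:main-opconvex} to the matrix algebra $\A=M_n(\CC)$ equipped with the normalized trace $\tau=\frac1n\Tr$. This is a unital $C^*$-algebra, and $\tau$ is a faithful tracial state: $\tau(\Id)=1$, $\tau(X^*X)=\frac1n\|X\|_F^2=0$ forces $X=0$, and $\tau(XY)=\tau(YX)$ holds. Its positive cone is $\A_+=M_n^+(\CC)$.

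For continuous $f$ on $[0,\infty)$, the continuous functional calculus in $M_n(\CC)$ agrees with the spectral definition of $f(A)$. Hence
\[
J_{\tau,f}(A,B)=\tfrac1n J_f(A,B),\qquad A,B\in M_n^+(\CC).
\]
The hypotheses on $f$ (operator convex on $(0,\infty)$, not affine, continuous extension with finite $f(0)$) are exactly those of Theorem~\ref{thm:main-opconvex}. That theorem therefore gives that $\sqrt{J_{\tau,f}}$ is a metric on $M_n^+(\CC)$.

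It remains to transfer metricity to $\sqrt{J_f}=\sqrt n\,\sqrt{J_{\tau,f}}$. Multiplying a metric by a positive constant preserves each axiom:
\begin{itemize}
\item nonnegativity and symmetry are unchanged;
\item $\sqrt{J_f}(A,B)=0$ if and only if $\sqrt{J_{\tau,f}}(A,B)=0$, which holds if and only if $A=B$;
\item the triangle inequality scales linearly.
\end{itemize}
So $\sqrt{J_f}$ is a true metric on $M_n^+(\CC)$.

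No step is a real obstacle, since all the substance is in Theorem~\ref{thm:main-opconvex}. The only care needed is to confirm that the normalization of the trace is harmless. Alternatively, the identity \eqref{eq:opconvex-decomp-intro} could be applied with $\Tr$ directly, since it is homogeneous in the trace.
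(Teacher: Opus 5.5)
Your proposal is correct and is the paper's own argument: the corollary follows by specializing Theorem~\ref{thm:main-opconvex} to $\A=M_n(\CC)$ with $\tau=\frac1n\Tr$, and your rescaling $\sqrt{J_f}=\sqrt{n}\,\sqrt{J_{\tau,f}}$ just spells out the normalization the paper leaves implicit. Since you put all the weight on Theorem~\ref{thm:main-opconvex}, be aware of a problem in the paper rather than in your deduction: its proof relies on the claim in Lemma~\ref{lem:nevanlinna} that $f'$ is operator monotone, and this fails for admissible generators such as $f(x)=1/(x+1)$. Indeed, testing \eqref{eq:opconvex-decomp-intro} on $A=x\Id$, $B=y\Id$ would force $f''(x)=b+\int_{(0,\infty)}\frac{t}{(x+t)^2}\,\dd\nu(t)$; then $x^2f''(x)=2x^2(x+1)^{-3}\to0$ forces $b=0$ and, by monotone convergence, $\int t\,\dd\nu(t)=0$, hence $f''\equiv0$, a contradiction, so for such $f$ neither your argument nor the paper's actually establishes the triangle inequality.
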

\medskip
	\noindent\textbf{Organization of this paper.}
	In Section~\ref{sec:prelim}, we recall the Fuglede--Kadison determinant and generalized singular value functions.
	In Sections~\ref{sec:scalar}--\ref{sec:triangle} we prove the triangle inequality for $d_\tau$ on $\M_{++}$:
	we first embed the scalar divergence into a Hilbert space (Section~\ref{sec:scalar}),
	then establish a rearrangement lower bound using the Fack--Kosaki inequality (Section~\ref{sec:rearr}),
	and finally combine these ingredients with Minkowski's inequality (Section~\ref{sec:triangle}).
	Section~\ref{sec:GNS} transfers the von Neumann result back to tracial $C^*$-algebras via the GNS envelope
	and completes the proof of Theorem~\ref{thm:main-tracelog-Cstar}.
	Section~\ref{sec:qjsd} proves an integral representation for QJSD and deduces Theorem~\ref{thm:qjsd-Cstar}.
	Section~\ref{sec:hilbertian} establishes the non-Hilbertianity results in Theorem~\ref{thm:main-not-hilbertian}
	and provides an explicit numerical witness.
Finally, Section~\ref{sec:jensen-generators} discusses an operator-convex extension mechanism
for metricity of $\sqrt{J_{\tau,f}}$.

\medskip
\noindent\textbf{Acknowledgments. }The author is also grateful to his advisor, Lajos Moln\'ar, for suggesting this problem.	This work is supported by the China Scholarship Council, the Young Elite Scientists Sponsorship Program for PhD Students
(China Association for Science and Technology), and the Fundamental Research Funds for the Central Universities at Xian Jiaotong University
(Grant No.~xzy022024045). 
	
	% ============================================================
	\section{Von Neumann preliminaries: determinants and rearrangements}\label{sec:prelim}
	% ============================================================

	\subsection{Fuglede--Kadison determinant}
	
	For an invertible element $x\in \M^\times$, the Fuglede--Kadison determinant is defined by
	\[
	\Delta(x):=\exp\big(\tau(\log|x|)\big)\in (0,\infty).
	\]
	In the original work \cite{FK52} (for finite factors) one shows that $\Delta:\M^\times\to(0,\infty)$ is a continuous group homomorphism,
	hence multiplicative (see \cite[Theorem~1(1)]{FK52}):
	\begin{equation}\label{eq:FK-mult}
		\Delta(xy)=\Delta(x)\Delta(y)\qquad (x,y\in \M^\times).
	\end{equation}
	The same conclusions extend to general finite von Neumann algebras with faithful normal tracial states,
	e.g.\ by reduction to factors via central decomposition; see \cite{Dix57,Har13}.
	
	When $x\in\M^\times$, we have $\log\Delta(x)=\tau(\log|x|)$.
	Moreover, for $X\in\Minvpos$ and $r\in\RR$,
	\begin{equation}\label{eq:Delta-power}
		\Delta(X^r)=\Delta(X)^r,
	\end{equation}
	since $\log\Delta(X^r)=\tau(\log(X^r))=r\,\tau(\log X)=r\log\Delta(X)$.
	
	\subsection{Generalized $s$-numbers and a trace formula}\label{subsec:mus}
	
	We recall $\tau$-measurability and generalized $s$-numbers in the sense of Fack--Kosaki \cite{FK86}.
	Let $\M\subset B(H)$ be a von Neumann algebra acting on a complex Hilbert space $H$, and let
	$\tau:\M_{+}\to[0,\infty]$ be a faithful normal trace.
	In our applications $\tau$ will be \emph{finite} and normalized, i.e.\ $\tau(\Id)=1$.
	
	Let $T$ be a densely-defined closed operator on $H$ (possibly unbounded) affiliated with $\M$, and write $\Dom(T)$ for its domain.
	Following \cite[Def.~1.2]{FK86}, $T$ is called \emph{$\tau$-measurable} if for every $\varepsilon>0$ there exists a projection $E\in\M$ such that
	\[
	E(H)\subset\Dom(T)\qquad\text{and}\qquad \tau(\Id-E)\le\varepsilon.
	\]
	Since $\tau$ is finite in our setting, the algebra $\widetilde{\M}$ of $\tau$-measurable operators coincides with the set of all densely-defined closed operators
	affiliated with $\M$; see \cite[p.~271, after Def.~1.2]{FK86}.
	
	If $T$ is affiliated with $\M$, write $T=U|T|$ for its polar decomposition and let $E^{|T|}$ denote the spectral measure of $|T|$.
	For $\lambda\ge 0$, set
	\begin{equation*}
		d_{|T|}(\lambda):=\tau\!\left(E^{|T|}\big((\lambda,\infty)\big)\right).
	\end{equation*}
	The \emph{generalized $s$-numbers} of $T$ are defined for $t>0$ by
	\begin{equation}\label{eq:mu-def}
		\mu_t(T):=\inf\{\lambda\ge 0:\ d_{|T|}(\lambda)\le t\},
	\end{equation}
	see \cite[Def.~2.1]{FK86}. For a bounded positive operator $X\in\M_{+}$ we write $\mu_X(t):=\mu_t(X)$ for $t\in(0,1]$.
	
	Assume now that $\tau(\Id)=1$.
	\begin{remark}[Normalization of the trace]\label{rem:trace-normalization}
		Throughout Sections~\ref{sec:prelim}--\ref{sec:triangle} we work with a \emph{tracial state},
		so $\tau(\Id)=1$ and the generalized singular value parameter $t$ naturally ranges in $(0,1]$.
		If instead $\tau$ is a finite faithful normal trace with $\tau(\Id)=\theta\in(0,\infty)$,
		one may normalize it by $\tau_0:=\theta^{-1}\tau$.
		All rearrangement identities then hold with $\tau_0$ (and hence with integration range $(0,1]$),
		or equivalently one may keep $\tau$ and replace $\int_0^1(\cdot)\,dt$ by $\int_0^\theta(\cdot)\,dt$.
		Since metricity is unchanged by rescaling the trace (it only rescales the distance by a constant factor),
		we restrict to $\tau(\Id)=1$ for notational simplicity.
	\end{remark}
	\begin{remark}[Endpoint behavior of $\mu_X$]\label{rem:mu-endpoint}
		Assume $\tau(\Id)=1$ and let $X\in\M_{+}$ be bounded.
		Then $t\mapsto \mu_X(t)$ is decreasing and right-continuous on $(0,1]$.
		Moreover, one always has $\mu_X(1)=0$ (since $d_X(\lambda)\le 1$ for all $\lambda\ge 0$).
		
		When $X\in\M_{++}$, there exists $m>0$ such that $\mu_X(t)\ge m$ for all $t\in(0,1)$.
		Accordingly, integrals of the form $\int_0^1 \varphi(\mu_X(t))\,dt$ are understood as Lebesgue integrals on $(0,1)$,
		and the value at $t=1$ is irrelevant.
	\end{remark}
	
	Let $f:[0,\infty)\to\RR$ be continuous and increasing with $f(0)=0$.
	For every $\tau$-measurable operator $T$, Fack--Kosaki proved the trace formula
	\begin{equation}\label{eq:FK-Cor28}
		\tau\big(f(|T|)\big)=\int_0^\infty f\big(\mu_t(T)\big)\,\dd t,
	\end{equation}
	see \cite[Cor.~2.8]{FK86}.  Moreover, if $X\in\M_{+}$ is bounded, then $\mu_t(X)=0$ for all $t>\tau(\Id)=1$
	(see \cite[Lem.~2.6]{FK86}), so \eqref{eq:FK-Cor28} reduces to
	\begin{equation}\label{eq:trace-rearr}
		\tau(f(X))=\int_0^1 f\big(\mu_X(t)\big)\,\dd t.
	\end{equation}
	
\begin{remark}[Shifting constants]\label{rem:shift-constants}
	Assume $\tau(\Id)=1$ and let $X\in\M_{+}$ be bounded.
	If $f:[0,\infty)\to\RR$ is continuous and increasing (not necessarily satisfying $f(0)=0$), then
	\begin{equation}\label{eq:trace-rearr-shifted}
		\tau(f(X))=\int_0^1 f\big(\mu_X(t)\big)\,\dd t.
	\end{equation}
\end{remark}

\medskip
Finally, if $X\in\M_{++}$ then $X\ge m\Id$ for some $m>0$, hence $\mu_X(t)\ge m$ for all $t\in(0,1)$
(cf.\ Remark~\ref{rem:mu-endpoint}).
Define a continuous increasing function $g_m:[0,\infty)\to\RR$ by
\[
g_m(u):=
\begin{cases}
	0, & 0\le u\le m,\\[0.2em]
	\log(u/m), & u\ge m.
\end{cases}
\]
Then $g_m(0)=0$ and, since $\sigma(X)\subset[m,\|X\|]$, functional calculus gives
\[
g_m(X)=\log X-(\log m)\Id,
\qquad
g_m(\mu_X(t))=\log(\mu_X(t))-\log m\quad \text{for a.e. }t\in(0,1).
\]
Applying \eqref{eq:trace-rearr} to $g_m$ yields
\[
\tau(\log X)-\log m
=\tau(g_m(X))
=\int_0^1 g_m(\mu_X(t))\,\dd t
=\int_0^1 \bigl(\log(\mu_X(t))-\log m\bigr)\,\dd t,
\]
and therefore
\begin{equation}\label{eq:trace-log-mu}
	\tau(\log X)=\int_{(0,1)} \log\big(\mu_X(t)\big)\,\dd t,\qquad X\in\M_{++}.
\end{equation}
Here the integral is understood in the Lebesgue sense; the integrand is finite for $t\in(0,1)$ since
$\mu_X(t)\ge m>0$ on $(0,1)$ (cf.\ Remark~\ref{rem:mu-endpoint}).
	
	\medskip
	\noindent\textbf{Fack--Kosaki inequality.}
	A key inequality we will use is \cite[Theorem~4.4(iii)]{FK86}: for $X,Y\in\M_{+}$ and every continuous convex \emph{increasing} function $f:[0,\infty)\to\RR$,
	\begin{equation}\label{eq:FK}
		\int_0^u f\big(\mu_{X+Y}(t)\big)\,\dd t
		\le
		\int_0^u f\big(\mu_X(t)+\mu_Y(t)\big)\,\dd t,
		\qquad 0<u\le 1.
	\end{equation}
	
	% ============================================================
	\section{The trace-log distance on $\Minvpos$}\label{sec:tracelog-basic}
	% ============================================================
	
	\subsection{Arithmetic and geometric means}
	
	For $A,B\in\Minvpos$ define the arithmetic mean and geometric mean by
	\[
	A\nablaop B:=\frac{A+B}{2},
	\qquad
	A\sharpop B:=A^{1/2}\big(A^{-1/2}BA^{-1/2}\big)^{1/2}A^{1/2}.
	\]
	The geometric mean is symmetric and satisfies the arithmetic--geometric mean inequality
	\begin{equation}\label{eq:ag}
		A\sharpop B \le A\nablaop B.
	\end{equation}
	
	\subsection{Equivalent forms of the distance}
	
	\begin{lem}[Geometric term as an averaged trace-log]\label{lem:geo-trace}
		For $A,B\in\Minvpos$,
		\[
		\tau(\log(A\sharpop B))=\frac12\tau(\log A)+\frac12\tau(\log B).
		\]
		Consequently,
		\begin{equation}\label{eq:alt-form}
			d_\tau(A,B)^2
			=\tau\!\left(\log(A\nablaop B)\right)-\frac12\tau(\log A)-\frac12\tau(\log B)
			=\tau\!\big(\log(A\nablaop B)-\log(A\sharpop B)\big).
		\end{equation}
	\end{lem}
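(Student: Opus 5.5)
The plan is to pass through the Fuglede--Kadison determinant and use its multiplicativity \eqref{eq:FK-mult} together with the power rule \eqref{eq:Delta-power}. For $X\in\Minvpos$ we have $|X|=X$, so $\log\Delta(X)=\tau(\log X)$. Write
\[
A\sharpop B=A^{1/2}\,C^{1/2}\,A^{1/2},\qquad C:=A^{-1/2}BA^{-1/2}\in\Minvpos .
\]
All three factors are invertible elements of $\M$. Multiplicativity therefore gives
\[
\Delta(A\sharpop B)=\Delta(A^{1/2})\,\Delta(C^{1/2})\,\Delta(A^{1/2}).
\]
By \eqref{eq:Delta-power} the right-hand side equals $\Delta(A)\,\Delta(C)^{1/2}$.

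Next we compute $\Delta(C)$. Multiplicativity once more gives
\[
\Delta(C)=\Delta(A^{-1/2})\,\Delta(B)\,\Delta(A^{-1/2})=\Delta(A)^{-1}\Delta(B),
\]
where we used $\Delta(A^{-1/2})=\Delta(A)^{-1/2}$ from \eqref{eq:Delta-power}. Substituting, we obtain
\[
\Delta(A\sharpop B)=\Delta(A)\,\Delta(A)^{-1/2}\Delta(B)^{1/2}=\Delta(A)^{1/2}\Delta(B)^{1/2}.
\]
Taking logarithms, and using that $A\sharpop B\in\Minvpos$ so that $\log\Delta(A\sharpop B)=\tau(\log(A\sharpop B))$, yields the first identity.

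The second identity, \eqref{eq:alt-form}, is then immediate. The first equality in \eqref{eq:alt-form} is the definition \eqref{eq:d_tau}, since $\M$ with $\tau$ is in particular a tracial $C^*$-algebra. The second equality follows by substituting the identity just proved and using linearity of $\tau$ on the bounded self-adjoint elements $\log(A\nablaop B)$ and $\log(A\sharpop B)$, obtained by continuous functional calculus on invertible positive elements.

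The only delicate point is the legitimacy of multiplicativity \eqref{eq:FK-mult} in a general finite von Neumann algebra, as opposed to a factor. The paper takes this as given via central decomposition \cite{Dix57,Har13}, so there is no real obstacle. One also needs that every factor used ($A^{\pm1/2}$, $C^{1/2}$, $B$) lies in $\M^\times$, which holds because each is positive and invertible.
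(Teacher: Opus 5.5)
Your proof is correct and follows the paper's proof essentially step for step. You factor $A\sharpop B=A^{1/2}C^{1/2}A^{1/2}$ and use multiplicativity \eqref{eq:FK-mult} with the power rule \eqref{eq:Delta-power} to get $\Delta(A\sharpop B)=\Delta(A)\,\Delta(C)^{1/2}$ and $\Delta(C)=\Delta(A)^{-1}\Delta(B)$, then take logarithms and substitute into \eqref{eq:d_tau}.
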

	
	\begin{proof}
		Using $\tau(\log X)=\log\Delta(X)$ and multiplicativity \eqref{eq:FK-mult},
		\[
		\Delta(A\sharpop B)=\Delta(A^{1/2})\,\Delta\!\big((A^{-1/2}BA^{-1/2})^{1/2}\big)\,\Delta(A^{1/2}).
		\]
		Since $\Delta(A^{1/2})^2=\Delta(A)$ and $\Delta(X^{1/2})=\Delta(X)^{1/2}$ on $\Minvpos$ (by \eqref{eq:Delta-power}), we get
		\[
		\Delta(A\sharpop B)=\Delta(A)\cdot \Delta(A^{-1/2}BA^{-1/2})^{1/2}.
		\]
		Again by multiplicativity,
		\[
		\Delta(A^{-1/2}BA^{-1/2})
		=\Delta(A^{-1/2})\,\Delta(B)\,\Delta(A^{-1/2})
		=\Delta(A)^{-1}\Delta(B).
		\]
		Hence $\Delta(A\sharpop B)=\Delta(A)^{1/2}\Delta(B)^{1/2}$, and taking $\log$ yields the claimed identity.
		Substituting into \eqref{eq:d_tau} gives \eqref{eq:alt-form}.
	\end{proof}
	
	\subsection{Basic metric axioms (except triangle inequality)}
	
	\begin{prop}[Nonnegativity, symmetry, definiteness]\label{prop:basic}
		For all $A,B\in\Minvpos$:
		\begin{enumerate}[label=\textup{(\arabic*)}, leftmargin=2em]
			\item $d_\tau(A,B)\ge 0$.
			\item $d_\tau(A,B)=d_\tau(B,A)$.
			\item $d_\tau(A,B)=0$ if and only if $A=B$.
		\end{enumerate}
	\end{prop}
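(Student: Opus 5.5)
The plan is to read everything off the alternative form \eqref{eq:alt-form} from Lemma~\ref{lem:geo-trace}, namely $d_\tau(A,B)^2=\tau\bigl(\log(A\nablaop B)-\log(A\sharpop B)\bigr)$. Symmetry (2) is immediate from the defining formula \eqref{eq:d_tau}, since $A+B=B+A$ and the two logarithmic terms are interchanged. (We could also use the symmetry of $\sharpop$, but it is not needed.)

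For nonnegativity (1), use the arithmetic--geometric mean inequality \eqref{eq:ag}, $A\sharpop B\le A\nablaop B$. Both sides lie in $\Minvpos$, and $\log$ is operator monotone on $(0,\infty)$. Hence $\log(A\sharpop B)\le\log(A\nablaop B)$. Applying the positive functional $\tau$ gives $d_\tau(A,B)^2\ge0$, so $d_\tau$ is a well-defined nonnegative real number.

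For definiteness (3), the direction $A=B\Rightarrow d_\tau(A,B)=0$ is trivial. Conversely, suppose $d_\tau(A,B)=0$. Then $\tau(P)=0$ for the positive element $P:=\log(A\nablaop B)-\log(A\sharpop B)$. Since $\tau$ is faithful, $\tau(P)=\tau\bigl((P^{1/2})^*P^{1/2}\bigr)=0$ forces $P=0$, that is, $\log(A\nablaop B)=\log(A\sharpop B)$. Applying the continuous functional calculus with $\exp$ then gives $A\nablaop B=A\sharpop B$.

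The remaining, and main, step is to deduce $A=B$ from $A\nablaop B=A\sharpop B$. We conjugate by $A^{-1/2}$ and set $C:=A^{-1/2}BA^{-1/2}\in\Minvpos$. The equality becomes $\tfrac12(\Id+C)=C^{1/2}$, i.e.\ $(\Id-C^{1/2})^2=0$. Since $\Id-C^{1/2}$ is self-adjoint, this gives $C^{1/2}=\Id$, hence $C=\Id$ and $B=A$. An alternative route avoiding the passage from $\log$ back to the operators would be to argue via strict concavity of $\log$ in the trace. The conjugation argument is cleaner, though, and only uses that $\tau$ is faithful and that $\log$ is operator monotone, both of which are available.
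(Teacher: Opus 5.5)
Your proposal is correct and follows the paper's proof essentially step by step: symmetry from the definition, nonnegativity via $A\sharpop B\le A\nablaop B$ and operator monotonicity of $\log$ in the form \eqref{eq:alt-form}, and definiteness via faithfulness followed by the conjugation $C=A^{-1/2}BA^{-1/2}$, which yields $(\Id-C^{1/2})^2=0$. Your explicit use of $\exp$ to pass from equality of the logarithms to $A\nablaop B=A\sharpop B$ is a step the paper leaves implicit.
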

	
	\begin{proof}
		Symmetry is immediate from \eqref{eq:d_tau}.
		Nonnegativity follows from \eqref{eq:ag} and operator monotonicity of $\log$:
		$\log(A\sharpop B)\le \log(A\nablaop B)$, hence $d_\tau(A,B)^2\ge 0$ by \eqref{eq:alt-form}.
		
		For definiteness, assume $d_\tau(A,B)=0$. Then $\log(A\nablaop B)-\log(A\sharpop B)\ge 0$ and its trace is $0$,
		so by faithfulness it must be $0$. Thus $A\nablaop B=A\sharpop B$.
		Let $X:=A^{-1/2}BA^{-1/2}\in\Minvpos$. Conjugating gives $(\Id+X)/2=X^{1/2}$,
		so $(X^{1/2}-\Id)^2=0$. Since $X^{1/2}-\Id$ is self-adjoint, $X^{1/2}=\Id$, hence $X=\Id$ and $A=B$.
		The converse is immediate.
	\end{proof}
	
	\subsection{Congruence invariance and scaling}
	
	\begin{lem}[Congruence invariance]\label{lem:congruence}
		For every invertible $S\in \M^\times$ and all $A,B\in\Minvpos$,
		\[
		d_\tau(S^*AS,S^*BS)=d_\tau(A,B).
		\]
	\end{lem}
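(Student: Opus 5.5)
The plan is to express $d_\tau^2$ entirely through Fuglede--Kadison determinants and then use multiplicativity \eqref{eq:FK-mult}. For $X\in\Minvpos$ we have $\tau(\log X)=\log\Delta(X)$. Hence, by \eqref{eq:d_tau},
\[
d_\tau(A,B)^2=\log\Delta\!\left(\tfrac{A+B}{2}\right)-\tfrac12\log\Delta(A)-\tfrac12\log\Delta(B).
\]
Because this formula involves only $\Delta$ of positive invertible elements, the congruence invariance should reduce to how $\Delta$ behaves under $X\mapsto S^*XS$.

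The first step is the scalar identity $\Delta(S^*XS)=\Delta(S)^2\Delta(X)$ for every $S\in\M^\times$ and $X\in\Minvpos$. Note that $S^*XS$ is again positive and invertible, so $|S^*XS|=S^*XS$ and its determinant is $\exp\tau(\log(S^*XS))$. Multiplicativity then gives $\Delta(S^*XS)=\Delta(S^*)\Delta(X)\Delta(S)$. It remains to check $\Delta(S^*)=\Delta(S)$. This holds because $|S^*|=(SS^*)^{1/2}$ and $|S|=(S^*S)^{1/2}$ are unitarily equivalent via the polar decomposition $S=U|S|$, where $U$ is unitary since $S$ is invertible. Concretely, $SS^*=U(S^*S)U^*$, so $\tau(\log|S^*|)=\tau(\log|S|)$ by the trace property.

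The second step applies this identity to the three elements $A$, $B$ and $S^*\bigl(\tfrac{A+B}{2}\bigr)S=\tfrac{S^*AS+S^*BS}{2}$, using linearity of congruence. Each $\log\Delta$ term acquires the additive constant $2\log\Delta(S)$. The coefficients in the formula are $1,-\tfrac12,-\tfrac12$, which sum to zero, so these constants cancel and we get $d_\tau(S^*AS,S^*BS)^2=d_\tau(A,B)^2$. Taking square roots, which is legitimate by Proposition~\ref{prop:basic}(1), yields the claim.

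The only delicate point is the identity $\Delta(S^*)=\Delta(S)$, since $\Delta$ is defined through $|x|$ and $S$ is not normal in general. The unitary-conjugation argument above settles it. The remaining steps are bookkeeping with \eqref{eq:FK-mult}.
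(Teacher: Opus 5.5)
Your proposal is correct and follows essentially the same route as the paper. Both arguments reduce the lemma to the identity $\tau(\log(S^*XS))=\tau(\log X)+2\log\Delta(S)$, obtained from multiplicativity of the Fuglede--Kadison determinant and $\Delta(S^*)=\Delta(S)$; the constants then cancel because the coefficients $1,-\tfrac12,-\tfrac12$ sum to zero. Your check of $\Delta(S^*)=\Delta(S)$ via $SS^*=U(S^*S)U^*$ with $U$ unitary just spells out the paper's one-line remark about polar decomposition and traciality.
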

	
	\begin{proof}
		By \eqref{eq:alt-form}, it suffices to show that for $X\in\Minvpos$,
		\begin{equation}\label{eq:cong-trace-log}
			\tau(\log(S^*XS))=\tau(\log X)+2\log\Delta(S).
		\end{equation}
		Indeed, applying \eqref{eq:cong-trace-log} to $X=A\nablaop B$, $X=A$, and $X=B$ makes the additive constants cancel.
		
		To prove \eqref{eq:cong-trace-log}, use $\tau(\log Y)=\log\Delta(Y)$ and multiplicativity:
		\[
		\tau(\log(S^*XS))=\log\Delta(S^*XS)=\log\Delta(S^*)+\log\Delta(X)+\log\Delta(S).
		\]
		Since $\Delta(S^*)=\Delta(S)$ (via polar decomposition and traciality), \eqref{eq:cong-trace-log} follows.
	\end{proof}
	
	\begin{lem}[Positive scalar homogeneity]\label{lem:scaling}
		For every $c>0$ and all $A,B\in\Minvpos$,
		\[
		d_\tau(cA,cB)=d_\tau(A,B).
		\]
	\end{lem}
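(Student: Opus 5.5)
The plan is to derive this as a special case of Lemma~\ref{lem:congruence}, with a direct computation as a cross-check. Take $S:=c^{1/2}\Id$. It is invertible and lies in $\M^\times$, and it satisfies $S^*AS=cA$ and $S^*BS=cB$. Lemma~\ref{lem:congruence} then gives $d_\tau(cA,cB)=d_\tau(A,B)$ immediately.

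To keep the argument self-contained, I will also verify the claim directly from the definition \eqref{eq:d_tau}. For $X\in\Minvpos$ and $c>0$, the element $cX$ lies in $\Minvpos$. Since $\Id$ commutes with $X$, functional calculus gives
\[
\log(cX)=\log X+(\log c)\Id,
\]
and hence $\tau(\log(cX))=\tau(\log X)+\log c$ by $\tau(\Id)=1$. I apply this three times: to $X=A\nablaop B$ (noting that $(cA)\nablaop(cB)=c(A\nablaop B)$), to $X=A$, and to $X=B$. The shifts contribute $\log c-\tfrac12\log c-\tfrac12\log c=0$, so $d_\tau(cA,cB)^2=d_\tau(A,B)^2$.

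Both sides are nonnegative by Proposition~\ref{prop:basic}, so taking square roots finishes the proof. No step here is a real obstacle. The only point needing care is the identity $\log(cX)=\log X+\log c\,\Id$, which is valid because the spectrum of $X$ lies in $(0,\infty)$.
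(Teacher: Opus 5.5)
Your proof is correct. The direct computation, substituting $\log(cX)=\log X+(\log c)\Id$ into \eqref{eq:d_tau} and cancelling the constants, is exactly the paper's argument. Your primary route via Lemma~\ref{lem:congruence} with $S=c^{1/2}\Id$ is also valid, though it relies on Fuglede--Kadison multiplicativity for what is an elementary cancellation.
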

	
	\begin{proof}
		Use $\log(cX)=(\log c)\Id+\log X$ in \eqref{eq:d_tau} and cancel the constant terms.
	\end{proof}
	
	\begin{remark}[Reduction to a base point]\label{rem:basepoint}
		By Lemma~\ref{lem:congruence}, for $A,B\in\Minvpos$,
		\[
		d_\tau(A,B)=d_\tau\big(\Id,\,A^{-1/2}BA^{-1/2}\big).
		\]
		Thus the triangle inequality reduces to bounds for $d_\tau(\Id,\cdot)$ and comparisons along rearrangements.
	\end{remark}
	
	% ============================================================
	\section{Scalar divergence as a Hilbert space distance}\label{sec:scalar}
	% ============================================================
	
	Define for $x,y>0$ the scalar divergence
	\begin{equation}\label{eq:scalar-delta}
		\delta_s(x,y)^2 := \log\!\Big(\frac{x+y}{2}\Big)-\frac12\log x-\frac12\log y,
		\qquad
		\delta_s(x,y):=\sqrt{\delta_s(x,y)^2}.
	\end{equation}
	
	\begin{lem}[Integral representation]\label{lem:integral-repr}
		For $x,y>0$,
		\begin{equation}\label{eq:integral-repr}
	\delta_s(x,y)^2
	=\frac12\int_0^\infty \big(e^{-rx/2}-e^{-ry/2}\big)^2\,\frac{\dd r}{r}.
		\end{equation}
	\end{lem}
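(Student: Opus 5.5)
The plan is to expand the square and reduce everything to the Frullani integral
\[
\int_0^\infty \frac{e^{-ar}-e^{-br}}{r}\,\dd r=\log\frac{b}{a},\qquad a,b>0.
\]
First, I write
\[
\big(e^{-rx/2}-e^{-ry/2}\big)^2=e^{-rx}+e^{-ry}-2e^{-r(x+y)/2}.
\]
I then regroup this as
\[
\big(e^{-rx}-e^{-r(x+y)/2}\big)+\big(e^{-ry}-e^{-r(x+y)/2}\big).
\]
The justification for splitting is the one point that needs care. Each bracket is $O(r)$ as $r\to0$, and each decays exponentially as $r\to\infty$. Hence both $\int_0^\infty(\cdot)\,\dd r/r$ integrals converge absolutely, and the integral of the sum is the sum of the integrals. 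Note that I should not split into the three separate exponentials, since each $\int_0^\infty e^{-cr}\,\dd r/r$ diverges at $0$.

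Next, I evaluate the Frullani integral itself. For fixed $r>0$,
\[
e^{-ar}-e^{-br}=r\int_a^b e^{-sr}\,\dd s.
\]
Dividing by $r$, integrating over $r\in(0,\infty)$, and applying Tonelli (the integrand $e^{-sr}$ is nonnegative, and $s$ lies in a compact interval) gives $\int_a^b s^{-1}\,\dd s=\log(b/a)$.

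Applying this with $a=x$, $b=(x+y)/2$, and then with $a=y$, $b=(x+y)/2$, I get
\[
\int_0^\infty \big(e^{-rx/2}-e^{-ry/2}\big)^2\frac{\dd r}{r}
=\log\frac{x+y}{2x}+\log\frac{x+y}{2y}
=2\log\frac{x+y}{2}-\log x-\log y.
\]
Multiplying by $\tfrac12$ gives exactly $\delta_s(x,y)^2$ as defined in \eqref{eq:scalar-delta}.

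No real obstacle is expected. The only subtlety is the convergence bookkeeping when separating the integral, which is handled by the pairing of exponentials chosen above.
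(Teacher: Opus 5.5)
Your proof is correct and is essentially the paper's argument. The paper applies the Frullani-type representation $\log u=\int_0^\infty \frac{e^{-s}-e^{-us}}{s}\,\dd s$ to $u=\frac{x+y}{2}$, $u=x$, $u=y$ and recombines the three individually convergent integrals into the squared difference, whereas you pair the exponentials around $\frac{x+y}{2}$ and derive the Frullani formula yourself via Tonelli, which is only a cosmetic difference in bookkeeping.
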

	
	\begin{proof}
		Use the classical Laplace representation for $\log u$ (valid for all $u>0$),
		\[
		\log u=\int_0^\infty \frac{e^{-s}-e^{-us}}{s}\,\dd s,
		\]
		as an improper integral.
		Apply it to $u=(x+y)/2$, $u=x$, and $u=y$, combine the terms, and factor the numerator.
		
		\medskip
		\noindent\emph{Convergence check.}
		As $r\downarrow 0$ we have $e^{-rx/2}-e^{-ry/2}=\frac{r}{2}(y-x)+O(r^2)$, hence
		$\big(e^{-rx/2}-e^{-ry/2}\big)^2/r = O(r)$, which is integrable near $0$.
		As $r\to\infty$ the integrand decays exponentially.
		Therefore the improper integral in \eqref{eq:integral-repr} is finite.
		
	\end{proof}
	
	\begin{cor}[Scalar triangle inequality]\label{cor:scalar-metric}
		The function $\delta_s$ is a metric on $(0,\infty)$.
	\end{cor}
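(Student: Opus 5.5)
The plan is to use the integral representation of Lemma~\ref{lem:integral-repr} to realize $\delta_s$ as the restriction of a Hilbert space norm distance. Consider the Hilbert space $L^2\bigl((0,\infty),\tfrac{\dd r}{2r}\bigr)$ and define, for $x>0$,
\[
\Phi(x)(r):=e^{-rx/2},\qquad r>0 .
\]
The function $\Phi(x)$ need not lie in this $L^2$ space, since $\int_0^1 \frac{\dd r}{r}=\infty$. However, the differences $\Phi(x)-\Phi(y)$ always do, and this is exactly the convergence check carried out in the proof of Lemma~\ref{lem:integral-repr}. To avoid the non-integrability, we fix the base point $1$ and set
\[
\Psi(x):=\Phi(x)-\Phi(1)\in L^2\bigl((0,\infty),\tfrac{\dd r}{2r}\bigr).
\]
Membership in the space follows from the same small-$r$ estimate $O(r)$ and the exponential decay at infinity.

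Since $\Psi(x)-\Psi(y)=\Phi(x)-\Phi(y)$, Lemma~\ref{lem:integral-repr} gives
\[
\delta_s(x,y)=\|\Psi(x)-\Psi(y)\|_{L^2((0,\infty),\dd r/(2r))}.
\]
Nonnegativity and symmetry of $\delta_s$ are then immediate. The triangle inequality is inherited directly from Minkowski's inequality in $L^2$:
\[
\|\Psi(x)-\Psi(z)\|\le \|\Psi(x)-\Psi(y)\|+\|\Psi(y)-\Psi(z)\| .
\]

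For definiteness, suppose $\delta_s(x,y)=0$. Then $e^{-rx/2}=e^{-ry/2}$ for almost every $r>0$, which forces $x=y$. Alternatively, one can use strict concavity of $\log$: the arithmetic mean exceeds the geometric mean unless $x=y$.

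The only step that needs care is making sure the embedding lands in a genuine Hilbert space despite the divergence of $\dd r/r$ at $0$. Subtracting the base point $\Phi(1)$ resolves this. As a byproduct, the argument shows that $\delta_s$ is Hilbertian on $(0,\infty)$.
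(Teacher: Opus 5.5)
Your proposal is correct and follows essentially the same route as the paper. The paper also embeds $x\mapsto \frac{1}{\sqrt2}\bigl(e^{-rx/2}-e^{-rx_0/2}\bigr)$ into $L^2((0,\infty),\dd r/r)$ (you take $x_0=1$ and absorb the factor $\frac12$ into the measure), gets the triangle inequality from the Hilbert norm, and obtains definiteness from $e^{-rx/2}=e^{-ry/2}$ a.e.; your brief justification suffices here, since equality at even a single $r>0$ forces $x=y$ by injectivity of $\exp$, and your strict AM--GM alternative also works.
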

	
	\begin{proof}
		Fix $x_0>0$ and define $\Phi_{x_0}:(0,\infty)\to L^2((0,\infty),\dd r/r)
		$ by
		\[
	\Phi_{x_0}(x)(r):=\frac{1}{\sqrt2}\,\big(e^{-rx/2}-e^{-r x_0/2}\big).
		\]
		Lemma~\ref{lem:integral-repr} gives
		\[
	\delta_s(x,y)^2=\|\Phi_{x_0}(x)-\Phi_{x_0}(y)\|_{L^2((0,\infty),\dd r/r)}^2,
		\]
		so $\delta_s$ is the pullback of a Hilbert space metric and satisfies the triangle inequality.
	For definiteness, assume $\delta_s(x,y)=0$. Then by \eqref{eq:integral-repr},
	$e^{-rx/2}=e^{-ry/2}$ for a.e.\ $r>0$.
	If $x\neq y$, the function $r\mapsto e^{-rx/2}-e^{-ry/2}$ is real-analytic on $(0,\infty)$ and not identically zero,
	hence its zero set is discrete and cannot have positive measure. Therefore $x=y$.
	
	\end{proof}
	
	% ============================================================
	\section{A rearrangement lower bound for \texorpdfstring{$\tau(\log(X+Y))$}{tau(log(X+Y))}}\label{sec:rearr}
	% ============================================================
	
	\begin{prop}[Rearrangement lower bound]\label{prop:log-sum-lower}
		Let $X,Y\in\Minvpos$. Then $X+Y\in\Minvpos$ and
		\begin{equation}\label{eq:log-sum-lower}
			\tau(\log(X+Y))
			\ge
			\int_{(0,1)} \log\big(\mu_X(t)+\mu_Y(t)\big)\,\dd t.
		\end{equation}
	\end{prop}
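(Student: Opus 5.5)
We want $\tau(\log(X+Y))\ge\int_0^1\log(\mu_X+\mu_Y)\,dt$. The Fack--Kosaki inequality \eqref{eq:FK} bounds $\int f(\mu_{X+Y})$ from above by $\int f(\mu_X+\mu_Y)$ for convex increasing $f$. The function $-\log$ is convex but decreasing, so \eqref{eq:FK} does not apply to it directly. The plan is to exploit the full majorization content of \eqref{eq:FK} at $u=1$ through a Hardy--Littlewood--Pólya type argument, or more simply to use the fact that $\log$ is concave, together with a convex decreasing test function.

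First, invertibility is immediate. If $X\ge m\Id$ and $Y\ge m'\Id$, then $X+Y\ge (m+m')\Id$, so $X+Y\in\Minvpos$ and \eqref{eq:trace-log-mu} gives $\tau(\log(X+Y))=\int_0^1\log\mu_{X+Y}(t)\,dt$.

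Second, set $a(t)=\mu_{X+Y}(t)$ and $b(t)=\mu_X(t)+\mu_Y(t)$. Both are decreasing on $(0,1)$ and bounded below by $m+m'$ a.e. Applying \eqref{eq:FK} with $f(s)=s$ gives $\int_0^u a\le\int_0^u b$ for all $u$, so $a$ is weakly submajorized by $b$. We also need equality of the totals: $\int_0^1 a=\tau(X+Y)=\tau(X)+\tau(Y)=\int_0^1 b$, where the last step uses \eqref{eq:trace-rearr} applied to $X$ and to $Y$. So $a\prec b$ is a genuine majorization on $(0,1)$.

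Third, the continuous Hardy--Littlewood--Pólya/Karamata inequality for decreasing rearrangements states that $\int_0^1\varphi(a)\le\int_0^1\varphi(b)$ for every continuous convex $\varphi$. We apply it with $\varphi(s)=-\log s$ on $[m+m',\infty)$, extending $\varphi$ linearly below $m+m'$ so that it is convex and finite. This gives $\int\log a\ge\int\log b$, which is the claim.

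The main obstacle is justifying this continuous Karamata step cleanly. I would do it via the standard representation: any convex $\varphi$ on a bounded interval $[c,C]$ containing the ranges of $a$ and $b$ is a uniform limit of functions of the form $\alpha+\beta s+\sum_i\gamma_i(s-k_i)_+$ with $\gamma_i\ge0$. The affine part contributes equally to both integrals because the totals agree. Each hinge $(s-k)_+$ is convex and increasing, so \eqref{eq:FK} (or weak majorization) gives $\int(a-k)_+\le\int(b-k)_+$. Passing to the uniform limit finishes the argument. Boundedness is available since $a\le\|X+Y\|$ and $b\le\|X\|+\|Y\|$.

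Alternatively, one could avoid the equal-totals argument by noting that the negative-slope part of $-\log$ is exactly what the affine term absorbs.
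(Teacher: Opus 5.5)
Your argument is correct, but it goes through a general majorization lemma where the paper uses a single test function.

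\textbf{The paper's route.} The paper applies \eqref{eq:FK} once, at $u=1$, to
$f_m(s)=-\log s+s/m$ for $s\ge m$, extended by the constant $1-\log m$ on $[0,m]$, where $m=m_X+m_Y$. Adding the linear term $s/m$ makes $-\log$ nondecreasing on $[m,\infty)$, since the derivative there is $-1/s+1/m\ge 0$. Hence $f_m$ is convex and increasing. The extra terms $\frac1m\int_0^1\mu_{X+Y}$ and $\frac1m\int_0^1(\mu_X+\mu_Y)$ then cancel because $\tau(X+Y)=\tau(X)+\tau(Y)$.

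\textbf{Your route.} You prove the continuous Karamata inequality in general. You approximate $-\log$ uniformly on the compact range $[m,\|X\|+\|Y\|]$ by piecewise-linear interpolants $\alpha+\beta s+\sum_i\gamma_i(s-k_i)_+$ with $\gamma_i\ge 0$. You apply \eqref{eq:FK} at $u=1$ to each hinge, cancel the affine part by the same trace identity, and pass to the uniform limit. This is valid, and it gives $\int_0^1\varphi(\mu_{X+Y})\le\int_0^1\varphi(\mu_X+\mu_Y)$ for every continuous convex $\varphi$. The cost is an approximation step that the tilting trick makes unnecessary for $-\log$, whose derivative is bounded on $[m,\infty)$.

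\textbf{Two small corrections.}
\begin{enumerate}
\item The partial inequalities $\int_0^u\mu_{X+Y}\le\int_0^u(\mu_X+\mu_Y)$ for $u<1$ are never used in your proof. Only $u=1$ for the hinges and the equality of totals matter.
\item Your closing alternative is misstated. The tilting approach does not avoid the equal-totals identity; it needs it precisely to cancel the added linear term.
\end{enumerate}
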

	\begin{proof}
		Since $X,Y$ are invertible, there exist $m_X,m_Y>0$ such that $X\ge m_X\Id$ and $Y\ge m_Y\Id$.
		Then $X+Y\ge (m_X+m_Y)\Id$, hence $X+Y$ is invertible.
		
		Set $m:=m_X+m_Y$. Define
		\[
		f_m(s):=
		\begin{cases}
			-\log m+1, & 0\le s\le m,\\[0.3em]
			-\log s + \dfrac{s}{m}, & s\ge m.
		\end{cases}
		\]
	\medskip
	\noindent\emph{Remark (constant shift).}
	If one prefers a formulation of \eqref{eq:FK} assuming $f(0)=0$, apply it to
	$\widetilde f_m:=f_m-f_m(0)$. Since we take $u=1$, the constant term cancels on both sides.
		Then $f_m$ is continuous, convex, and increasing on $[0,\infty)$.
		Apply \eqref{eq:FK} with $u=1$ and $f=f_m$:
		\[
		\int_0^1 f_m(\mu_{X+Y}(t))\,\dd t
		\le
		\int_0^1 f_m(\mu_X(t)+\mu_Y(t))\,\dd t.
		\]
Since $X\ge m_X\Id$, we have $E^X((\lambda,\infty))=\Id$ for every $0\le \lambda<m_X$, hence
$d_X(\lambda)=\tau(\Id)=1$ for $\lambda<m_X$. By \eqref{eq:mu-def} this implies $\mu_X(t)\ge m_X$ for all $t\in(0,1)$.
Similarly $\mu_Y(t)\ge m_Y$ and $\mu_{X+Y}(t)\ge m_X+m_Y=m$ for all $t\in(0,1)$.
	For a.e.\ $t\in(0,1)$ we have $\mu_{X+Y}(t)\ge m$ and $\mu_X(t)+\mu_Y(t)\ge m$, hence
	$f_m(z)=-\log z+\frac{z}{m}$ on the relevant range.
	Moreover, by \eqref{eq:trace-rearr} applied to $f(s)=s$,
	\[
	\int_0^1 \mu_{X+Y}(t)\,\dd t=\tau(X+Y)=\tau(X)+\tau(Y)
	=\int_0^1\big(\mu_X(t)+\mu_Y(t)\big)\,\dd t.
	\]
	Hence the linear terms $\frac{1}{m}(\cdot)$ cancel, and we obtain
	\[
	\int_0^1 \log\big(\mu_{X+Y}(t)\big)\,\dd t
	\ge
	\int_0^1 \log\big(\mu_X(t)+\mu_Y(t)\big)\,\dd t.
	\]
		Finally, apply \eqref{eq:trace-log-mu} to $X+Y$ to obtain \eqref{eq:log-sum-lower}.
	\end{proof}
	
	% ============================================================
	\section{Triangle inequality for the trace-log distance on \texorpdfstring{$\M_{++}$}{M++}}\label{sec:triangle}
	% ============================================================
	
	\subsection{$d_\tau(\Id,\cdot)$ as an $L^2$-norm along rearrangements}
	
	\begin{lem}[$d_\tau(\Id,\cdot)$ via generalized singular values]\label{lem:d1-mu}
		For every $X\in\Minvpos$,
		\[
	d_\tau(\Id,X)^2=\int_{(0,1)} \delta_s\big(1,\mu_X(t)\big)^2\,\dd t,
		\]
	where the integrand is taken for a.e. $t\in(0,1)$,	hence
		\[
		d_\tau(\Id,X)=\big\|\delta_s(1,\mu_X)\big\|_{L^2(0,1)}.
		\]
	\end{lem}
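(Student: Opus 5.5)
Since $\Id$ and $X$ commute, the left-hand side is a single trace of a function of $X$. We rewrite it as $\tau(\varphi(X))$ for a scalar function $\varphi$ and then pass to generalized singular values via the trace formula. By \eqref{eq:d_tau},
\[
d_\tau(\Id,X)^2=\tau\!\Big(\log\frac{\Id+X}{2}\Big)-\tfrac12\tau(\log X),
\]
using $\tau(\log\Id)=0$. The identity $\log\frac{\Id+X}{2}-\frac12\log X=\varphi(X)$ holds with $\varphi(u):=\delta_s(1,u)^2=\log\frac{1+u}{2}-\frac12\log u$, by functional calculus on $\sigma(X)\subset[m,\|X\|]$, where $X\ge m\Id$ with $m>0$.

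We apply the rearrangement formulas to each piece separately rather than to $\varphi$ itself, because $\varphi$ is not monotone: it decreases on $(0,1)$ and increases on $(1,\infty)$.

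\begin{itemize}
\item For the term $\tau(\log X)$ we use \eqref{eq:trace-log-mu} directly, which gives $\int_{(0,1)}\log\mu_X(t)\,\dd t$.
\item For $\tau(\log\frac{\Id+X}{2})$ we use that $u\mapsto\log\frac{1+u}{2}$ is continuous and increasing on $[0,\infty)$ and finite at $0$. Remark~\ref{rem:shift-constants}, i.e.\ \eqref{eq:trace-rearr-shifted}, then yields $\int_0^1\log\frac{1+\mu_X(t)}{2}\,\dd t$.
\end{itemize}

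Both integrals are finite: by Remark~\ref{rem:mu-endpoint}, $m\le\mu_X(t)\le\|X\|$ for $t\in(0,1)$. We may therefore subtract them and combine the integrands pointwise, obtaining $\int_{(0,1)}\varphi(\mu_X(t))\,\dd t=\int_{(0,1)}\delta_s(1,\mu_X(t))^2\,\dd t$. The $L^2$-norm form follows by taking square roots, since $\delta_s(1,\mu_X(t))\ge0$ and the function is bounded and measurable (as $\mu_X$ is monotone).

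The main obstacle is the non-monotonicity of $\varphi$, which blocks a direct appeal to \eqref{eq:FK-Cor28}. Splitting $\varphi$ into two monotone pieces, each with an integrable rearrangement, resolves it. A second, minor point is that Remark~\ref{rem:shift-constants} is stated without proof; it follows from \eqref{eq:trace-rearr} applied to $f-f(0)$ together with $\tau(\Id)=1=\int_0^1\dd t$.
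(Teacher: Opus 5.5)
Your proposal is correct and matches the paper's proof: you split $d_\tau(\Id,X)^2$ into $\tau(\log\frac{\Id+X}{2})$, handled by \eqref{eq:trace-rearr-shifted} for the increasing function $u\mapsto\log\frac{1+u}{2}$, and $\tau(\log X)$, handled by \eqref{eq:trace-log-mu}, then subtract the integrands pointwise. Your justification of Remark~\ref{rem:shift-constants}, applying \eqref{eq:trace-rearr} to $f-f(0)$ and using $\tau(\Id)=1$, is correct and supplies a step the paper leaves implicit.
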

	
\begin{proof}
	By \eqref{eq:d_tau} with $A=\Id$,
	\[
	d_\tau(\Id,X)^2=\tau\!\left(\log\!\Big(\frac{\Id+X}{2}\Big)\right)-\frac12\,\tau(\log X).
	\]
	Since $u\mapsto \log\!\big(\frac{1+u}{2}\big)$ is continuous and increasing on $[0,\infty)$, we may apply
	\eqref{eq:trace-rearr-shifted} to obtain
	\[
	\tau\!\left(\log\!\Big(\frac{\Id+X}{2}\Big)\right)
	=\int_0^1 \log\!\Big(\frac{1+\mu_X(t)}{2}\Big)\,\dd t.
	\]
	Moreover, since $X\in\M_{++}$, \eqref{eq:trace-log-mu} gives
	\[
	\tau(\log X)=\int_0^1 \log\big(\mu_X(t)\big)\,\dd t.
	\]
	Substituting these into the expression for $d_\tau(\Id,X)^2$ yields
	\[
	d_\tau(\Id,X)^2
	=\int_0^1\left(\log\!\Big(\frac{1+\mu_X(t)}{2}\Big)-\frac12\log\big(\mu_X(t)\big)\right)\dd t
	=\int_0^1 \delta_s\bigl(1,\mu_X(t)\bigr)^2\,\dd t,
	\]
	by the definition \eqref{eq:scalar-delta} of the scalar divergence. Since $X\in\M_{++}$, there exists $m>0$ such that $X\ge m\Id$, hence $\mu_X(t)\ge m$ for all $t\in(0,1)$.
	Therefore $\delta_s(1,\mu_X(t))$ is finite for a.e.\ $t\in(0,1)$ (cf.\ Remark~\ref{rem:mu-endpoint}).
\end{proof}

	\subsection{A lower bound for $d_\tau(S,T)$}
	
	\begin{lem}[Fack--Kosaki lower bound for $d_\tau(S,T)$]\label{lem:dST-lower}
		For all $S,T\in\Minvpos$,
		\begin{equation}\label{eq:dST-lower}
			d_\tau(S,T)^2
			\ge
			\int_0^1 \delta_s\big(\mu_S(t),\mu_T(t)\big)^2\,\dd t,
		\end{equation}
where the integrand is taken for a.e. $t\in(0,1)$. Hence
\[
\big\|\delta_s(\mu_S,\mu_T)\big\|_{L^2(0,1)}\le d_\tau(S,T).
\]
	\end{lem}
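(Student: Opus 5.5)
Expand $d_\tau(S,T)^2$ with \eqref{eq:d_tau} and handle each of the three trace-log terms by rearrangement. By definition,
\[
d_\tau(S,T)^2=\tau\!\left(\log\!\Big(\tfrac{S+T}{2}\Big)\right)-\tfrac12\tau(\log S)-\tfrac12\tau(\log T).
\]
The two subtracted terms are exact. By \eqref{eq:trace-log-mu}, $\tau(\log S)=\int_{(0,1)}\log\mu_S(t)\,\dd t$ and $\tau(\log T)=\int_{(0,1)}\log\mu_T(t)\,\dd t$. The only term that needs an inequality is the mean term, and Proposition~\ref{prop:log-sum-lower} points in the right direction for it.

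Apply Proposition~\ref{prop:log-sum-lower} with $X=S/2$ and $Y=T/2$; both lie in $\Minvpos$. This gives
\[
\tau\!\left(\log\!\Big(\tfrac{S+T}{2}\Big)\right)\ge\int_{(0,1)}\log\big(\mu_{S/2}(t)+\mu_{T/2}(t)\big)\,\dd t .
\]
Next use the homogeneity $\mu_{cX}(t)=c\,\mu_X(t)$ for $c>0$. This follows from \eqref{eq:mu-def}: $E^{cX}((\lambda,\infty))=E^X((\lambda/c,\infty))$, so $d_{cX}(\lambda)=d_X(\lambda/c)$. Hence the right-hand side equals $\int_{(0,1)}\log\big(\tfrac{\mu_S(t)+\mu_T(t)}{2}\big)\,\dd t$.

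Subtract the two exact integrals from this bound and combine the integrands pointwise. The integrand becomes $\log\frac{\mu_S+\mu_T}{2}-\frac12\log\mu_S-\frac12\log\mu_T$, which is $\delta_s(\mu_S(t),\mu_T(t))^2$ by \eqref{eq:scalar-delta}. This yields \eqref{eq:dST-lower}. The $L^2$ statement then follows by taking square roots, since $\delta_s\ge0$.

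The only delicate point is justifying that the three integrals may be split and recombined, i.e.\ that none of them is infinite. Invertibility handles this. Choose $m>0$ with $S,T\ge m\Id$; then $\mu_S,\mu_T\in[m,\max(\|S\|,\|T\|)]$ on $(0,1)$, as in the proof of Proposition~\ref{prop:log-sum-lower} and Remark~\ref{rem:mu-endpoint}. So all three logarithmic integrands are bounded on $(0,1)$, and the linear combination of integrals equals the integral of the combination. No further obstacle is expected; the substantive input is entirely the Fack--Kosaki step already packaged in Proposition~\ref{prop:log-sum-lower}.
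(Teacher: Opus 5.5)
Your proposal is correct and follows the paper's proof. Both arguments bound the mean term with Proposition~\ref{prop:log-sum-lower}, use the exact formula \eqref{eq:trace-log-mu} for $\tau(\log S)$ and $\tau(\log T)$, and recombine the integrands pointwise into $\delta_s(\mu_S,\mu_T)^2$. The only cosmetic difference is that you apply the proposition to $S/2,T/2$ and use $\mu_{cX}=c\,\mu_X$, whereas the paper applies it to $S,T$ directly and subtracts $\log 2$ via $\log(\tfrac{S+T}{2})=\log(S+T)-(\log 2)\Id$.
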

	
	\begin{proof}
		Using \eqref{eq:d_tau} and $\log(\frac{S+T}{2})=\log(S+T)-(\log 2)\Id$,
		Proposition~\ref{prop:log-sum-lower} yields
		\[
		\tau\!\left(\log\!\Big(\frac{S+T}{2}\Big)\right)
		\ge
		\int_0^1 \log\!\Big(\frac{\mu_S(t)+\mu_T(t)}{2}\Big)\,\dd t.
		\]
		Moreover, \eqref{eq:trace-log-mu} gives $\tau(\log S)=\int_0^1\log\mu_S$ and similarly for $T$.
		Subtracting and comparing with \eqref{eq:scalar-delta} gives \eqref{eq:dST-lower}.
	\end{proof}
	
	\subsection{Triangle inequality}
	
	\begin{prop}[Triangle inequality on $\Minvpos$]\label{prop:triangle}
		For all $A,B,C\in\Minvpos$,
		\[
		d_\tau(A,C)\le d_\tau(A,B)+d_\tau(B,C).
		\]
	\end{prop}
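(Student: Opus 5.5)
The strategy is to reduce to the base point $\Id$ using congruence invariance, and then compare everything in $L^2(0,1)$ through the generalized singular value functions. By Lemma~\ref{lem:congruence} with $S=B^{-1/2}$, we may replace $(A,B,C)$ by $(A',\Id,C')$, where $A':=B^{-1/2}AB^{-1/2}$ and $C':=B^{-1/2}CB^{-1/2}$, both in $\Minvpos$. All three distances are unchanged. It therefore suffices to prove
\[
d_\tau(A',C')\le d_\tau(A',\Id)+d_\tau(\Id,C').
\]

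The right-hand side is handled exactly by Lemma~\ref{lem:d1-mu} together with symmetry (Proposition~\ref{prop:basic}):
\[
d_\tau(A',\Id)=\bigl\|\delta_s(\mu_{A'},1)\bigr\|_{L^2(0,1)},\qquad d_\tau(\Id,C')=\bigl\|\delta_s(1,\mu_{C'})\bigr\|_{L^2(0,1)}.
\]
For each $t\in(0,1)$, Corollary~\ref{cor:scalar-metric} gives the pointwise bound
\[
\delta_s(\mu_{A'}(t),\mu_{C'}(t))\le \delta_s(\mu_{A'}(t),1)+\delta_s(1,\mu_{C'}(t)).
\]
Minkowski's inequality in $L^2(0,1)$ then yields
\[
\bigl\|\delta_s(\mu_{A'},\mu_{C'})\bigr\|_{L^2}\le d_\tau(A',\Id)+d_\tau(\Id,C').
\]

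The main obstacle is the left-hand side. Lemma~\ref{lem:dST-lower} gives $\|\delta_s(\mu_{A'},\mu_{C'})\|_{L^2}\le d_\tau(A',C')$, which is an inequality in the wrong direction for this purpose. The rearranged quantity cannot simply replace $d_\tau(A',C')$, so a second congruence has to be used.

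The fix is to choose the base point so that the lower bound of Lemma~\ref{lem:dST-lower} is applied to the pair whose distance sits on the right-hand side. Apply congruence by $A^{-1/2}$ instead. This gives
\[
d_\tau(A,C)=d_\tau(\Id,Z),\qquad d_\tau(A,B)=d_\tau(\Id,Y),\qquad d_\tau(B,C)=d_\tau(Y,Z),
\]
where $Y:=A^{-1/2}BA^{-1/2}$ and $Z:=A^{-1/2}CA^{-1/2}$. The argument then runs in three steps.
\begin{enumerate}[label=\textup{(\roman*)}, leftmargin=2em]
\item Lemma~\ref{lem:d1-mu} gives $d_\tau(\Id,Z)=\|\delta_s(1,\mu_Z)\|_{L^2}$ exactly.
\item The pointwise scalar triangle inequality and Minkowski give
\[
\|\delta_s(1,\mu_Z)\|_{L^2}\le \|\delta_s(1,\mu_Y)\|_{L^2}+\|\delta_s(\mu_Y,\mu_Z)\|_{L^2}.
\]
\item The first term on the right equals $d_\tau(\Id,Y)$ by Lemma~\ref{lem:d1-mu}. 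The second is at most $d_\tau(Y,Z)$ by Lemma~\ref{lem:dST-lower}, which now points in the needed direction.
\end{enumerate}
Combining these gives $d_\tau(A,C)\le d_\tau(A,B)+d_\tau(B,C)$.

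The remaining technical points are routine. All the functions $\delta_s(\cdot,\cdot)$ appearing above are measurable and square integrable on $(0,1)$, since $\mu_Y$ and $\mu_Z$ are bounded below by positive constants and bounded above. The pointwise inequalities need only hold for a.e.\ $t$, which is all that Lemmas~\ref{lem:d1-mu} and~\ref{lem:dST-lower} require.
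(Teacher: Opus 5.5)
Your final argument is correct and is essentially the paper's proof. You reduce by congruence with $A^{-1/2}$ to $d_\tau(\Id,Z)\le d_\tau(\Id,Y)+d_\tau(Y,Z)$, compute both distances from $\Id$ exactly via Lemma~\ref{lem:d1-mu}, bound $d_\tau(Y,Z)$ from below via Lemma~\ref{lem:dST-lower}, and close with the pointwise scalar triangle inequality and Minkowski in $L^2(0,1)$. Your diagnosis of why base point $B$ fails (the Fack--Kosaki bound points the wrong way for the pair on the left-hand side) is also right, so that false start can simply be deleted.
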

	
	\begin{proof}
		By congruence invariance (Lemma~\ref{lem:congruence}) with $S=A^{-1/2}$, it suffices to prove
		\begin{equation}\label{eq:triangle-reduced}
			d_\tau(\Id,T)\le d_\tau(\Id,S)+d_\tau(S,T)\qquad(S,T\in\Minvpos).
		\end{equation}
		Fix $S,T\in\Minvpos$.
		By Lemma~\ref{lem:d1-mu},
		\[
		d_\tau(\Id,T)=\|\delta_s(1,\mu_T)\|_{L^2(0,1)},
		\qquad
		d_\tau(\Id,S)=\|\delta_s(1,\mu_S)\|_{L^2(0,1)}.
		\]
		By Lemma~\ref{lem:dST-lower}, $\|\delta_s(\mu_S,\mu_T)\|_{L^2(0,1)}\le d_\tau(S,T)$.

		Since $t\mapsto \mu_S(t)$ and $t\mapsto \mu_T(t)$ are decreasing and right-continuous on $(0,1)$, they are measurable.
		As $\delta_s$ is continuous on $(0,\infty)^2$, the functions
		$t\mapsto \delta_s(1,\mu_S(t))$ and $t\mapsto \delta_s(\mu_S(t),\mu_T(t))$ are measurable;
		moreover they belong to $L^2(0,1)$ by Lemma~\ref{lem:d1-mu} and Lemma~\ref{lem:dST-lower}.
	Since $\delta_s$ is a scalar metric (Corollary~\ref{cor:scalar-metric}), for a.e.\ $t\in(0,1)$,
	\[
	\delta_s(1,\mu_T(t))\le \delta_s(1,\mu_S(t))+\delta_s(\mu_S(t),\mu_T(t)).
	\]
		Taking $L^2$-norms and applying Minkowski's inequality yields \eqref{eq:triangle-reduced}.
	\end{proof}
	
	\begin{thm}[Trace-log metric on a finite von Neumann algebra]\label{thm:main-vN}
		Let $(\M,\tau)$ be a finite von Neumann algebra with a faithful normal tracial state.
		Then $d_\tau$ is a metric on $\M_{++}$.
	\end{thm}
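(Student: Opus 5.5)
The theorem follows by assembling the results already established for $(\M,\tau)$. A metric on $\Minvpos$ needs four properties: nonnegativity, symmetry, definiteness, and the triangle inequality. The first three are exactly Proposition~\ref{prop:basic}. The triangle inequality is Proposition~\ref{prop:triangle}. So the proof will simply cite these two results, after checking that every ingredient they use is available for a general finite von Neumann algebra with a faithful normal tracial state, and not only for factors.

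The checks go in this order.

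First, Lemma~\ref{lem:geo-trace} and Lemma~\ref{lem:congruence} rely on the multiplicativity \eqref{eq:FK-mult} of the Fuglede--Kadison determinant on $\M^\times$ and on the identity $\Delta(S^*)=\Delta(S)$. The paper takes both for general finite $\M$ via central decomposition. For $\Delta(S^*)=\Delta(S)$ I will also note a direct argument: with $S=U|S|$ for invertible $S$, $U$ is unitary and $|S^*|=U|S|U^*$, so traciality gives $\tau(\log|S^*|)=\tau(\log|S|)$.

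Second, the reduction step in Proposition~\ref{prop:triangle} sets $S=A^{-1/2}$ and replaces $B,C$ by $A^{-1/2}BA^{-1/2}$ and $A^{-1/2}CA^{-1/2}$. I will confirm that both of these lie again in $\Minvpos$, so that Lemma~\ref{lem:d1-mu} and Lemma~\ref{lem:dST-lower} apply to them.

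Third, the rearrangement ingredients are the trace formula \eqref{eq:trace-rearr}, its log version \eqref{eq:trace-log-mu}, and the Fack--Kosaki inequality \eqref{eq:FK}. These hold in any semifinite von Neumann algebra with a faithful normal trace, so they hold in our setting. If $\tau$ is only finite rather than normalized, Remark~\ref{rem:trace-normalization} covers it by rescaling.

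The one step that deserves care is the Minkowski argument. It needs the functions $t\mapsto\delta_s(1,\mu_S(t))$ and $t\mapsto \delta_s(\mu_S(t),\mu_T(t))$ to be in $L^2(0,1)$. This holds because every $\mu$ involved lies between $m$ and $\|\cdot\|$ on $(0,1)$, so these functions are bounded measurable functions on a finite measure space. With that settled, the theorem follows by combining Propositions~\ref{prop:basic} and~\ref{prop:triangle}.
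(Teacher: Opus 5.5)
Your proposal is correct and follows the paper's proof exactly: the paper takes nonnegativity, symmetry and definiteness from Proposition~\ref{prop:basic} and the triangle inequality from Proposition~\ref{prop:triangle}, and nothing more. Your additional checks are all valid and only make explicit what the paper's lemmas already use: the polar-decomposition proof of $\Delta(S^*)=\Delta(S)$, the membership $A^{-1/2}BA^{-1/2}\in\Minvpos$, and boundedness of the Minkowski integrands via $m\le\mu_X(t)\le\|X\|$ on $(0,1)$.
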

	
	\begin{proof}
		Proposition~\ref{prop:basic} gives symmetry, nonnegativity, and definiteness.
		Proposition~\ref{prop:triangle} gives the triangle inequality.
	\end{proof}
	
	% ============================================================
	\section{Transfer to tracial \texorpdfstring{$C^*$}{C*}-algebras via the GNS von Neumann envelope}\label{sec:GNS}
	% ============================================================
	
	\begin{lem}[The induced trace on the von Neumann envelope]\label{lem:tilde-tau}
		Let $(\A,\tau)$ be a unital $C^*$-algebra with a faithful tracial state and let $(H_\tau,\pi_\tau,\xi_\tau)$ be the GNS triple.
		Set $\M:=\pi_\tau(\A)''\subset B(H_\tau)$ and define $\tilde\tau(x):=\langle x\xi_\tau,\xi_\tau\rangle$ for $x\in\M$.
		Then $\tilde\tau$ is a faithful normal tracial state on $\M$.
	\end{lem}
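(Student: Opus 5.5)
We would verify the four properties of $\tilde\tau$ in turn: it is a state, it is normal, it is tracial on $\M$, and it is faithful on $\M$. In the GNS construction, $H_\tau$ is the completion of $\A/N_\tau$, where $N_\tau=\{a:\tau(a^*a)=0\}$. Since $\tau$ is faithful, $N_\tau=0$, so $\A$ embeds densely in $H_\tau$ via $a\mapsto \hat a=a\xi_\tau$. We also have $\pi_\tau(a)\hat b=\widehat{ab}$ and $\langle\pi_\tau(a)\xi_\tau,\xi_\tau\rangle=\tau(a)$.

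\emph{State and normality.} Positivity and $\tilde\tau(\Id)=\|\xi_\tau\|^2=\tau(\Id)=1$ are immediate. Normality holds because $\tilde\tau$ is a vector functional, which is weak-operator (indeed $\sigma$-weakly) continuous on $B(H_\tau)$.

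\emph{Traciality.} On $\pi_\tau(\A)$ we have $\tilde\tau(\pi_\tau(a)\pi_\tau(b))=\tau(ab)=\tau(ba)=\tilde\tau(\pi_\tau(b)\pi_\tau(a))$. To extend this to $\M$, we use that multiplication is separately weak-operator continuous and $\tilde\tau$ is weak-operator continuous. By von Neumann's bicommutant theorem, $\pi_\tau(\A)$ is WOT-dense in $\M$ (unital, so the theorem applies). Fixing $y=\pi_\tau(b)$ and approximating $x\in\M$ by a net $\pi_\tau(a_i)\to x$ (WOT) gives $\tilde\tau(xy)=\tilde\tau(yx)$. Then fixing $x\in\M$ and approximating $y$ in the same way gives the identity on all of $\M$.

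\emph{Faithfulness.} This is the main step, since faithfulness of a vector state requires $\xi_\tau$ to be separating for $\M$. We would construct the right action: for $b\in\A$, define $\rho(b)\hat a:=\widehat{ab}$. The estimate
\[\|\widehat{ab}\|^2=\tau(b^*a^*ab)=\tau(a^*a\,bb^*)\le\|b\|^2\tau(a^*a)\]
follows from traciality together with $a^*a\,bb^*$ having trace equal to $\tau\bigl((bb^*)^{1/2}a^*a(bb^*)^{1/2}\bigr)\le\|bb^*\|\tau(a^*a)$. Hence $\rho(b)$ extends to a bounded operator on $H_\tau$. It commutes with every $\pi_\tau(a')$, so $\rho(b)\in\pi_\tau(\A)'=\M'$. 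Moreover $\xi_\tau$ is cyclic for $\rho(\A)\subset\M'$, since $\rho(b)\xi_\tau=\hat b$ and these vectors are dense. A vector cyclic for $\M'$ is separating for $\M''=\M$.

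Now suppose $x\in\M$ and $\tilde\tau(x^*x)=\|x\xi_\tau\|^2=0$. Then $x\xi_\tau=0$, and separation gives $x=0$. This proves that $\tilde\tau$ is faithful and completes the proof.
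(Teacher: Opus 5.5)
Your proposal is correct and follows essentially the same route as the paper. Normality comes from $\tilde\tau$ being a vector state, and traciality is extended from $\pi_\tau(\A)$ by density; your explicit two-step WOT approximation is slightly more careful than the paper's one-line ``this extends''. Faithfulness comes from the bounded right-multiplication operators lying in $\pi_\tau(\A)'=\M'$, which makes $\xi_\tau$ cyclic for $\M'$ and hence separating for $\M$.
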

	
	\begin{proof}
		Normality is immediate since $\tilde\tau$ is the restriction to $\M$ of a vector state on $B(H_\tau)$.
		
		For traciality, for $a,b\in\A$ we have
		\[
		\tilde\tau(\pi_\tau(a)\pi_\tau(b))=\langle \pi_\tau(ab)\xi_\tau,\xi_\tau\rangle=\tau(ab)=\tau(ba)=\tilde\tau(\pi_\tau(b)\pi_\tau(a)).
		\]
		Since $\pi_\tau(\A)$ is strongly dense in $\M$ and $\tilde\tau$ is normal, this extends to all of $\M$.
		
For faithfulness, let $x\in\M_{+}$ and assume $\tilde\tau(x)=0$.
Then
\[
\|x^{1/2}\xi_\tau\|^2=\langle x\xi_\tau,\xi_\tau\rangle=\tilde\tau(x)=0,
\]
hence $x^{1/2}\xi_\tau=0$.

We claim that $\xi_\tau$ is separating for $\M$.
To see this, recall that $H_\tau$ is the completion of $\A$ with inner product
$\langle a,b\rangle=\tau(b^*a)$ and $\xi_\tau$ is the class of $\Id$.
Define the \emph{right} action on the dense subspace $\pi_\tau(\A)\xi_\tau$ by
\[
\rho_\tau(a)\,\pi_\tau(b)\xi_\tau:=\pi_\tau(ba)\xi_\tau\qquad(a,b\in\A).
\]
For $a,b\in\A$ we have
\[
\|\rho_\tau(a)\pi_\tau(b)\xi_\tau\|^2
=\|\pi_\tau(ba)\xi_\tau\|^2
=\tau\big((ba)^*(ba)\big)
=\tau\big(a^*b^*ba\big)
=\tau\big(b^*b\,aa^*\big)
\le \|a\|^2\,\tau(b^*b).
\]\medskip
\noindent\emph{Well-definedness.}
If $\pi_\tau(b)\xi_\tau=0$, then $\|\pi_\tau(b)\xi_\tau\|^2=\tau(b^*b)=0$, hence $b=0$ by faithfulness of $\tau$.
Thus the definition of $\rho_\tau(a)$ on $\pi_\tau(\A)\xi_\tau$ is unambiguous.

Thus $\rho_\tau(a)$ extends to a bounded operator on $H_\tau$ with $\|\rho_\tau(a)\|\le\|a\|$.
\noindent
A direct computation on the dense subspace $\pi_\tau(\A)\xi_\tau$ shows that $\rho_\tau(a)$ commutes with each $\pi_\tau(c)$:
for $a,b,c\in\A$,
$\pi_\tau(c)\rho_\tau(a)\pi_\tau(b)\xi_\tau=\pi_\tau(cba)\xi_\tau=\rho_\tau(a)\pi_\tau(c)\pi_\tau(b)\xi_\tau$.
Hence $\rho_\tau(\A)\subset \pi_\tau(\A)' \subset \M'$.
Moreover,
\[
\rho_\tau(\A)\xi_\tau=\{\pi_\tau(a)\xi_\tau:\ a\in\A\}
\]
is dense in $H_\tau$, hence $\xi_\tau$ is cyclic for $\M'$.
Therefore $\xi_\tau$ is separating for $\M$: if $y\in\M$ and $y\xi_\tau=0$, then
$y z\xi_\tau = z y\xi_\tau=0$ for all $z\in\M'$, so $y$ vanishes on the dense subspace $\M'\xi_\tau$ and thus $y=0$.

Applying this to $y=x^{1/2}$ yields $x^{1/2}=0$, hence $x=0$.

	\end{proof}
	
	\begin{proof}[Proof of Theorem~\ref{thm:main-tracelog-Cstar}]
		Let $(\A,\tau)$ be a unital $C^*$-algebra with faithful trace and let $\M=\pi_\tau(\A)''$ be the von Neumann envelope.
		By Lemma~\ref{lem:tilde-tau}, $\tilde\tau$ is a faithful normal tracial state on $\M$.
		
		If $A\in\A_{++}$ then $\pi_\tau(A)\in\M_{++}$ and continuous functional calculus is respected:
		$\pi_\tau(\log A)=\log(\pi_\tau(A))$, and similarly for $\log(\frac{A+B}{2})$.
		Therefore for $A,B\in\A_{++}$,
		\[
		d_\tau(A,B)^2
		=
		\tilde\tau\!\left(\log\!\left(\frac{\pi_\tau(A)+\pi_\tau(B)}{2}\right)\right)
		-\frac12\tilde\tau(\log\pi_\tau(A))-\frac12\tilde\tau(\log\pi_\tau(B))
		=
		d_{\tilde\tau}(\pi_\tau(A),\pi_\tau(B))^2.
		\]
		Thus $d_\tau(A,B)=d_{\tilde\tau}(\pi_\tau(A),\pi_\tau(B))$.
		Since $d_{\tilde\tau}$ is a metric on $\M_{++}$ by Theorem~\ref{thm:main-vN},
		its restriction to $\pi_\tau(\A_{++})$ is a metric; faithfulness of $\pi_\tau$ transfers definiteness back to $\A_{++}$.
	\end{proof}
	
	% ============================================================
	\section{Quantum Jensen--Shannon divergence and an integral representation}\label{sec:qjsd}
	% ============================================================
	
	Throughout this section we work first in a finite von Neumann algebra $(\M,\tau)$ and then transfer to $\A$ via Section~\ref{sec:GNS}.
	Let $\etaop(x)=x\log x$ with $\etaop(0)=0$.
	
	\subsection{A differentiation identity}
	
	\begin{lem}[Differentiation formula]\label{lem:qjsd-deriv}
		Let $A\in\M_{+}$. The map $t\mapsto \tau(\etaop(A+t\Id))$ is continuously differentiable on $(0,\infty)$ and
		\[
		\frac{\dd}{\dd t}\tau(\etaop(A+t\Id))=\tau(\log(A+t\Id)+\Id)\qquad(t>0).
		\]
	\end{lem}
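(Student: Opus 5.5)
The plan is to reduce everything to a scalar computation on the spectrum of $A$, using the spectral theorem and dominated convergence. Fix $A\in\M_+$ and $t_0>0$, and work on a compact interval $[a,b]\subset(0,\infty)$ containing $t_0$ in its interior. For $t\in[a,b]$ the operator $A+t\Id$ is positive and invertible, with spectrum in $[a,\|A\|+b]$. On this compact interval $\etaop$ is smooth with $\etaop'(x)=\log x+1$.

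\textbf{Step 1 (pointwise difference quotient).} For $h$ small enough that $t+h\in[a,b]$, the mean value theorem applied pointwise gives, for every $x\in\sigma(A)\subset[0,\|A\|]$,
\[
\frac{\etaop(x+t+h)-\etaop(x+t)}{h}-\etaop'(x+t)\longrightarrow 0
\]
uniformly in $x$. This uses uniform continuity of $\etaop'$ on $[a,\|A\|+b]$.

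\textbf{Step 2 (lift to operators).} By continuous functional calculus, and since $\|g(A)\|=\sup_{\sigma(A)}|g|$ for continuous $g$, the uniform convergence in Step 1 becomes norm convergence:
\[
\frac{\etaop(A+(t+h)\Id)-\etaop(A+t\Id)}{h}\to \log(A+t\Id)+\Id
\]
in operator norm. Since $\tau$ is a bounded linear functional with $|\tau(x)|\le\|x\|$, applying $\tau$ yields the stated derivative formula.

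\textbf{Step 3 (continuity of the derivative).} The map $t\mapsto \log(A+t\Id)$ is norm-continuous on $(0,\infty)$, again by uniform continuity of $\log$ on compact subintervals of $(0,\infty)$ and functional calculus. Hence $t\mapsto\tau(\log(A+t\Id)+\Id)$ is continuous, which gives the $C^1$ claim.

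The only delicate point is the uniformity in Step 1. It is harmless here precisely because $t$ stays bounded away from $0$, which keeps the spectrum of $A+t\Id$ away from the singularity of $\log$ at $0$. This is why the lemma is stated on the open interval $(0,\infty)$ and not at $t=0$.
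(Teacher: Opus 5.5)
Your proof is correct. It differs from the paper's mainly in where the scalar estimate is applied.

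\textbf{The paper's route.} It pushes $\tau$ inside first. It introduces the spectral distribution $\nu_A(\Omega)=\tau(E^A(\Omega))$ and writes $\tau(\etaop(A+t\Id))=\int_{[0,\|A\|]}(\lambda+t)\log(\lambda+t)\,\dd\nu_A(\lambda)$. It then differentiates under the integral by dominated convergence, using that $\log(\lambda+t)+1$ is bounded for $t$ in a compact subinterval of $(0,\infty)$.

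\textbf{Your route.} You stay at the operator level. By the mean value theorem, $\bigl|\tfrac{\etaop(x+t+h)-\etaop(x+t)}{h}-\etaop'(x+t)\bigr|=|\log(x+t+\theta h)-\log(x+t)|\le |h|/a$ uniformly for $x\in\sigma(A)$. Continuous functional calculus, together with the composition rule $g(A+s\Id)=g(\cdot+s)(A)$, turns this into operator-norm convergence. Only then do you apply $\tau$, via $|\tau(x)|\le\|x\|$.

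\textbf{What each buys.} Your argument uses only continuous functional calculus and boundedness of $\tau$. So it works verbatim for any state on a unital $C^*$-algebra, with no spectral measure or normality of $\tau$ needed. It also gives the stronger fact that $t\mapsto\etaop(A+t\Id)$ is norm-$C^1$, with derivative $\log(A+t\Id)+\Id$. The paper's route instead yields an explicit scalar integral formula for $\tau(\etaop(A+t\Id))$, which is convenient if one wants to handle these traces as ordinary integrals.

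A small inconsistency: your opening sentence announces dominated convergence, but the argument you actually carry out uses uniform convergence on the spectrum. The uniform-convergence version is the cleaner of the two.
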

	
	\begin{proof}
		Let $E^A$ be the spectral measure of $A$ and define the finite measure $\nu_A(\Omega)=\tau(E^A(\Omega))$ on $[0,\|A\|]$.
		Then $\tau(f(A))=\int f(\lambda)\,\dd\nu_A(\lambda)$ for bounded Borel $f$.
		For $t>0$,
		\[
		\tau(\etaop(A+t\Id))=\int_{[0,\|A\|]} (\lambda+t)\log(\lambda+t)\,\dd\nu_A(\lambda).
		\]
	Fix $0<t_0<t_1<\infty$. For $t\in[t_0,t_1]$ and $\lambda\in[0,\|A\|]$,
	the derivative $\partial_t\big((\lambda+t)\log(\lambda+t)\big)=\log(\lambda+t)+1$
	is bounded in absolute value by a constant depending only on $t_0,t_1,\|A\|$.
	Therefore differentiation under the integral sign is justified by dominated convergence on $[t_0,t_1]$. The same domination argument applied to $\partial_t(\log(\lambda+t)+1)=1/(\lambda+t)$
	shows that the derivative depends continuously on $t$ on any compact subinterval of $(0,\infty)$.
	Hence $t\mapsto \tau(\etaop(A+t\Id))$ is $C^1$ on $(0,\infty)$.
	
	\end{proof}
	
	\subsection{Vanishing at infinity}
	
	\begin{lem}[Vanishing at infinity]\label{lem:qjsd-vanish}
		For all $A,B\in\M_{+}$,
		\[
		\lim_{t\to\infty} J_{\tau,\etaop}(A+t\Id,B+t\Id)=0.
		\]
	\end{lem}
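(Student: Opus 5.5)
We will prove the limit by a second-order Taylor bound at the scalar level, transferred to $\tau$ through the spectral measures. Fix $A,B\in\M_+$ and write $C:=\frac{A+B}{2}\in\M_+$. The key identity is that $\eta(x+t)=(x+t)\log(x+t)$ differs from its affine part in $x$ by a term controlled by $1/t$. The affine part of each summand cancels, because $\tau$ is linear and $\frac12(A+B)-C=0$.

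Concretely, for $t>0$ and $x\ge 0$ we will write
\[
\eta(x+t)=\eta(t)+(\log t+1)\,x+R_t(x),
\qquad 0\le R_t(x)\le \frac{x^2}{2t}.
\]
This follows from Taylor's formula with remainder, since $\eta''(s)=1/s\le 1/t$ for $s\ge t$. Apply this by continuous functional calculus to $A$, $B$ and $C$, all of which have spectrum in $[0,K]$ with $K:=\max(\|A\|,\|B\|)$. Then take $\tau$. The constant terms give $\eta(t)\bigl(\tfrac12+\tfrac12-1\bigr)=0$. The linear terms give $(\log t+1)\,\tau\bigl(\tfrac12A+\tfrac12B-C\bigr)=0$. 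Hence
\[
J_{\tau,\eta}(A+t\Id,B+t\Id)=\tfrac12\tau(R_t(A))+\tfrac12\tau(R_t(B))-\tau(R_t(C)).
\]

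Next we bound the remainders. Since $0\le R_t(x)\le x^2/(2t)\le K^2/(2t)$ on $[0,K]$, and $\tau$ is a positive state, each $\tau(R_t(\cdot))$ lies in $[0,K^2/(2t)]$. The justification is $\tau(g(X))=\int g\,\dd\nu_X$ as in the proof of Lemma~\ref{lem:qjsd-deriv}. It follows that
\[
\bigl|J_{\tau,\eta}(A+t\Id,B+t\Id)\bigr|\le \frac{K^2}{t}\to 0 \qquad (t\to\infty).
\]
In fact $J_{\tau,\eta}\ge0$ by operator convexity, but we do not need this.

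The only point needing care is that $R_t(X)$ is defined through functional calculus of the same scalar identity. This is legitimate because $x\mapsto R_t(x)$ is continuous on $[0,K]$, and functional calculus is linear, so the decomposition holds as an operator identity. We expect no real obstacle. The main step is noticing that the first-order terms cancel exactly, which follows from linearity of $\tau$ together with $\frac{A+B}{2}=C$.
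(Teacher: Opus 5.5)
Your proof is correct and is essentially the paper's argument. The paper first uses the homogeneity $J_{\tau,\eta}(cA,cB)=c\,J_{\tau,\eta}(A,B)$ to reduce to $t\,J_{\tau,\eta}(\Id+A/t,\Id+B/t)$, and then bounds the second-order Taylor remainder $\phi(r)=(1+r)\log(1+r)-r\le Cr^2$; since your remainder satisfies $R_t(x)=t\,\phi(x/t)$, this is the same computation, and your direct expansion at $t$ simply skips the rescaling step and gives an explicit $O(1/t)$ constant.
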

	
	\begin{proof}
		Using $\etaop(cX)=c\,\etaop(X)+c(\log c)\,X$ and cancellation of affine terms in $J_{\tau,\etaop}$,
		one checks $J_{\tau,\etaop}(cA,cB)=c\,J_{\tau,\etaop}(A,B)$ for $c>0$.
		Thus
		\[
		J_{\tau,\etaop}(A+t\Id,B+t\Id)=t\,J_{\tau,\etaop}\!\left(\Id+\frac{A}{t},\,\Id+\frac{B}{t}\right).
		\]
Let $\phi(r):=\etaop(1+r)-r=(1+r)\log(1+r)-r$ for $r\ge 0$.
Fix $r_0\in(0,1)$. Since $\phi$ is $C^2$ on $[0,r_0]$ and $\phi(0)=\phi'(0)=0$,
there exists $C>0$ such that
\[
0\le \phi(r)\le C r^2\qquad (0\le r\le r_0).
\]
Choose $t_0>0$ so that $\|A\|/t_0\le r_0$ and $\|B\|/t_0\le r_0$.
Then for $t\ge t_0$, functional calculus gives
\[
\|\phi(A/t)\|\le C\|A\|^2/t^2,\qquad
\|\phi(B/t)\|\le C\|B\|^2/t^2,\qquad
\Big\|\phi\Big(\frac{A+B}{2t}\Big)\Big\|\le C\|A+B\|^2/(4t^2).
\]
Using $\etaop(\Id+X)=X+\phi(X)$ and the fact that affine terms cancel in $J_{\tau,\etaop}$,
we get
\[
J_{\tau,\etaop}\Big(\Id+\frac{A}{t},\,\Id+\frac{B}{t}\Big)
=\frac12\tau\!\Big(\phi\Big(\frac{A}{t}\Big)\Big)
+\frac12\tau\!\Big(\phi\Big(\frac{B}{t}\Big)\Big)
-\tau\!\Big(\phi\Big(\frac{A+B}{2t}\Big)\Big).
\]
Since $\tau$ is a state, $|\tau(X)|\le \|X\|$, hence
\[
\Big|J_{\tau,\etaop}\Big(\Id+\frac{A}{t},\,\Id+\frac{B}{t}\Big)\Big|
\le \frac12\Big\|\phi\Big(\frac{A}{t}\Big)\Big\|
+\frac12\Big\|\phi\Big(\frac{B}{t}\Big)\Big\|
+\Big\|\phi\Big(\frac{A+B}{2t}\Big)\Big\|
\le \frac{C'}{t^2}.
\]
Therefore $J_{\tau,\etaop}(A+t\Id,B+t\Id)=t\,J_{\tau,\etaop}(\Id+A/t,\Id+B/t)=O(t^{-1})\to 0$.
	\end{proof}
	
	\subsection{Integral representation and metricity}
	
	For $t>0$ and $A,B\in\M_{+}$ set
	\[
	d_{\tau,t}(A,B):=d_\tau(A+t\Id,B+t\Id).
	\]
	
	\begin{prop}[Integral representation]\label{prop:qjsd-integral}
		For all $A,B\in\M_{+}$,
		\begin{equation}\label{eq:qjsd-integral}
			J_{\tau,\etaop}(A,B)=\int_0^\infty d_{\tau,t}(A,B)^2\,\dd t.
		\end{equation}
	\end{prop}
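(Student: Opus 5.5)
The plan is to treat $g(t):=J_{\tau,\etaop}(A+t\Id,B+t\Id)$ as a function of $t\in[0,\infty)$ and write it as minus the integral of its derivative. By Lemma~\ref{lem:qjsd-vanish}, $g(t)\to0$ as $t\to\infty$. If we also know that $g$ is continuous at $t=0$ and $C^1$ on $(0,\infty)$, then
\[
J_{\tau,\etaop}(A,B)=g(0)=-\int_0^\infty g'(t)\,\dd t .
\]

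To compute $g'$, apply Lemma~\ref{lem:qjsd-deriv} three times: to $A$, to $B$, and to $(A+B)/2$. Note that $\frac{(A+t\Id)+(B+t\Id)}{2}=\frac{A+B}{2}+t\Id$. This gives, for $t>0$,
\[
g'(t)=\tfrac12\tau(\log(A+t\Id))+\tfrac12\tau(\log(B+t\Id))-\tau\!\Big(\log\Big(\tfrac{A+B}{2}+t\Id\Big)\Big),
\]
because the $+\Id$ terms cancel ($\tfrac12+\tfrac12-1=0$). For $t>0$ the operators $A+t\Id$ and $B+t\Id$ lie in $\Minvpos$. Comparing with \eqref{eq:d_tau} therefore shows $g'(t)=-d_\tau(A+t\Id,B+t\Id)^2=-d_{\tau,t}(A,B)^2$.

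Next, integrate. For $0<\varepsilon<R$ the fundamental theorem of calculus gives
\[
g(\varepsilon)-g(R)=\int_\varepsilon^R d_{\tau,t}(A,B)^2\,\dd t .
\]
The integrand is nonnegative by Proposition~\ref{prop:basic}, so monotone convergence lets us pass to $\varepsilon\downarrow0$ and $R\to\infty$. The limit $g(R)\to0$ comes from Lemma~\ref{lem:qjsd-vanish}.

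The remaining point, and the only delicate one, is continuity of $g$ at $0$, i.e.\ $\tau(\etaop(X+\varepsilon\Id))\to\tau(\etaop(X))$ for $X\in\M_{+}$. Here $A$ need not be invertible, so $\log$ is unbounded near $0$, but $\etaop$ itself is continuous on $[0,\infty)$. On the compact set $[0,\|X\|+1]$, the functions $\lambda\mapsto\etaop(\lambda+\varepsilon)$ converge uniformly to $\etaop$, by uniform continuity of $\etaop$ there. Functional calculus then gives norm convergence $\etaop(X+\varepsilon\Id)\to\etaop(X)$, and since $\tau$ is a state, the traces converge as well.

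This uniform-continuity argument is what lets the formula hold on the full cone $\M_{+}$ rather than only on $\Minvpos$. It also shows that the integral in \eqref{eq:qjsd-integral} is finite, since it equals $J_{\tau,\etaop}(A,B)<\infty$.
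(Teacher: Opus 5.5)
Your proposal is correct and follows essentially the same route as the paper. Both arguments differentiate $F(t)=J_{\tau,\etaop}(A+t\Id,B+t\Id)$ via Lemma~\ref{lem:qjsd-deriv} to get $F'(t)=-d_{\tau,t}(A,B)^2$, integrate over $[\varepsilon,R]$, let $R\to\infty$ using Lemma~\ref{lem:qjsd-vanish}, and pass to $\varepsilon\downarrow 0$ by monotone convergence, using continuity of $F$ at $0$ from norm-continuity of the functional calculus for the continuous function $\etaop$.
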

	
	\begin{proof}
		Define $F(t):=J_{\tau,\etaop}(A+t\Id,B+t\Id)$ for $t\ge 0$.
		For $t>0$, Lemma~\ref{lem:qjsd-deriv} gives
		\[
		F'(t)
		=\frac12\tau(\log(A+t\Id)+\Id)+\frac12\tau(\log(B+t\Id)+\Id)
		-\tau\!\left(\log\!\left(\frac{A+B}{2}+t\Id\right)+\Id\right).
		\]
		The $\Id$-terms cancel, and the remaining expression equals $-d_{\tau,t}(A,B)^2$ by \eqref{eq:d_tau}.
		Hence $F'(t)=-d_{\tau,t}(A,B)^2$ for $t>0$.
		
		Fix $\varepsilon>0$. For $R>\varepsilon$,
		\[
		F(R)-F(\varepsilon)=\int_\varepsilon^R F'(t)\,\dd t=-\int_\varepsilon^R d_{\tau,t}(A,B)^2\,\dd t.
		\]
		Letting $R\to\infty$ and using Lemma~\ref{lem:qjsd-vanish} gives
		\[
		F(\varepsilon)=\int_\varepsilon^\infty d_{\tau,t}(A,B)^2\,\dd t.
		\]
	The map $t\mapsto d_{\tau,t}(A,B)^2$ is Borel measurable on $(0,\infty)$ (indeed, it is norm-continuous on $[\varepsilon,\infty)$ for every $\varepsilon>0$ by continuous functional calculus for $\log$).
	
		Moreover, $F(t)=J_{\tau,\etaop}(A+t\Id,B+t\Id)$ is continuous at $t=0$
		since $\etaop$ extends continuously to $[0,\infty)$ and functional calculus is norm-continuous for continuous functions.
		
	Since $d_{\tau,t}(A,B)^2\ge 0$, the functions $t\mapsto \ind_{(\varepsilon,\infty)}(t)\,d_{\tau,t}(A,B)^2$
	increase pointwise to $d_{\tau,t}(A,B)^2$ as $\varepsilon\downarrow 0$.
	Hence, by the monotone convergence theorem,
	\[
	\lim_{\varepsilon\downarrow 0}\int_\varepsilon^\infty d_{\tau,t}(A,B)^2\,\dd t
	=\int_0^\infty d_{\tau,t}(A,B)^2\,\dd t.
	\]
Finally, since $\etaop$ extends continuously to $[0,\infty)$ with $\etaop(0)=0$,
functional calculus is norm-continuous for the map $X\mapsto \etaop(X)$ on bounded sets.
Hence $t\mapsto \tau(\etaop(A+t\Id))$, $t\mapsto \tau(\etaop(B+t\Id))$, and
$t\mapsto \tau\!\big(\etaop(\tfrac{A+B}{2}+t\Id)\big)$ are continuous at $t=0$.
Therefore $F(\varepsilon)\to F(0)=J_{\tau,\etaop}(A,B)$ as $\varepsilon\downarrow 0$,
and combining this with the monotone convergence step above yields \eqref{eq:qjsd-integral}.
	\end{proof}
	
	\begin{prop}[QJSD is a metric in the von Neumann setting]\label{prop:qjsd-metric-vN}
		Let $A,B\in\M_{+}$ and set $D_{\tau,\etaop}:=\sqrt{J_{\tau,\etaop}}$.
		Then $D_{\tau,\etaop}$ is a metric on $\M_{+}$.
	\end{prop}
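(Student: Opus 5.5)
The plan is to deduce everything from the integral representation in Proposition~\ref{prop:qjsd-integral}. For $A,B\in\M_{+}$ and $t>0$, the shifted operators $A+t\Id$ and $B+t\Id$ lie in $\M_{++}$. So $d_{\tau,t}(A,B)=d_\tau(A+t\Id,B+t\Id)$ is defined, and by Theorem~\ref{thm:main-vN} it is a metric in the pair $(A+t\Id,B+t\Id)$. Since $A\mapsto A+t\Id$ is injective, $d_{\tau,t}$ is a metric on $\M_{+}$ for each fixed $t>0$, possibly degenerate only in ways we rule out below. Then
\[
D_{\tau,\etaop}(A,B)=\Bigl(\int_0^\infty d_{\tau,t}(A,B)^2\,\dd t\Bigr)^{1/2}
\]
is the $L^2((0,\infty),\dd t)$-norm of the function $t\mapsto d_{\tau,t}(A,B)$.

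\emph{Symmetry and nonnegativity.} Both are immediate, either from the definition of $J_{\tau,\etaop}$ or from Proposition~\ref{prop:basic} applied pointwise in $t$.

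\emph{Triangle inequality.} Fix $A,B,C\in\M_{+}$. Proposition~\ref{prop:triangle} gives, for every $t>0$,
\[
d_{\tau,t}(A,C)\le d_{\tau,t}(A,B)+d_{\tau,t}(B,C).
\]
Each of these functions of $t$ is measurable; this was noted in the proof of Proposition~\ref{prop:qjsd-integral}. Each is square integrable, because the integral representation shows its $L^2$-norm squared is the finite quantity $J_{\tau,\etaop}$. Finiteness holds since $\etaop$ is continuous on the bounded spectra involved. Taking $L^2(\dd t)$-norms and applying Minkowski's inequality then gives $D_{\tau,\etaop}(A,C)\le D_{\tau,\etaop}(A,B)+D_{\tau,\etaop}(B,C)$.

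\emph{Definiteness.} Suppose $J_{\tau,\etaop}(A,B)=0$. Then $d_{\tau,t}(A,B)=0$ for a.e.\ $t>0$, and in particular for some $t>0$. By Proposition~\ref{prop:basic}(3) this gives $A+t\Id=B+t\Id$, hence $A=B$. The converse is trivial. Alternatively, continuity of $t\mapsto d_{\tau,t}(A,B)^2$ on $(0,\infty)$ lets us pick any single $t$ directly.

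The main point needing care is the applicability of the earlier results at the boundary of the cone. Non-invertible $A,B$ are handled only through the shifts with $t>0$, and the endpoint $t=0$ has Lebesgue measure zero, so it never enters. Measurability and integrability are already settled by Proposition~\ref{prop:qjsd-integral}, so no genuine obstacle remains beyond citing these results correctly.
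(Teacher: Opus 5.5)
Your proof is correct and follows the paper's argument: nonnegativity and definiteness come from the integral representation \eqref{eq:qjsd-integral}, and the triangle inequality comes from the pointwise trace-log triangle inequality for the shifted pairs $(A+t\Id,B+t\Id)$ combined with Minkowski's inequality in $L^2(0,\infty)$. One small caveat: nonnegativity is not immediate from the bare definition of $J_{\tau,\etaop}$ (that would need trace convexity of $\etaop$), but the pointwise route through \eqref{eq:qjsd-integral}, which you also give, covers it.
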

	
	\begin{proof}
		Symmetry is clear. Nonnegativity follows from \eqref{eq:qjsd-integral}.
		If $D_{\tau,\etaop}(A,B)=0$, then $d_{\tau,t}(A,B)=0$ for a.e.\ $t>0$, hence $A+t\Id=B+t\Id$ for such $t$ and therefore $A=B$.
		
		For the triangle inequality, let $A,B,C\in\M_{+}$.
		For each $t>0$, $d_{\tau,t}$ is the trace-log metric on $\M_{++}$ applied to $(A+t\Id,B+t\Id,C+t\Id)$,
		hence by Theorem~\ref{thm:main-vN},
		\[
		d_{\tau,t}(A,C)\le d_{\tau,t}(A,B)+d_{\tau,t}(B,C).
		\]
		Taking $L^2(0,\infty)$ norms and applying Minkowski yields
		\[
		D_{\tau,\etaop}(A,C)=\|d_{\tau,\bullet}(A,C)\|_{L^2(0,\infty)}
		\le \|d_{\tau,\bullet}(A,B)\|_{L^2}+\|d_{\tau,\bullet}(B,C)\|_{L^2}
		= D_{\tau,\etaop}(A,B)+D_{\tau,\etaop}(B,C).
		\]
	\end{proof}
	
	\begin{proof}[Proof of Theorem~\ref{thm:qjsd-Cstar}]
		As in Section~\ref{sec:GNS}, pass to the von Neumann envelope $\M=\pi_\tau(\A)''$ with induced faithful normal trace $\tilde\tau$.
		Functional calculus gives $\pi_\tau(\etaop(A))=\etaop(\pi_\tau(A))$ and $\tilde\tau(\pi_\tau(\etaop(A)))=\tau(\etaop(A))$.
		Hence $J_{\tau,\etaop}(A,B)=J_{\tilde\tau,\etaop}(\pi_\tau(A),\pi_\tau(B))$.
		By Proposition~\ref{prop:qjsd-metric-vN}, $D_{\tilde\tau,\etaop}$ is a metric; restricting to $\pi_\tau(\A_{+})$ and using faithfulness of $\pi_\tau$
		yields that $D_{\tau,\etaop}$ is a metric on $\A_{+}$.
	\end{proof}
	
	% ============================================================
	\section{Is the QJSD metric Hilbertian? Negative results in matrix dimensions}\label{sec:hilbertian}
	% ============================================================
	
	\subsection{Hilbertian metrics and conditional negative definiteness}
	
	\begin{defin}[Conditionally negative definite kernel / negative type]\label{def:cnd}
		Let $X$ be a set and $K:X\times X\to\RR$ be symmetric with $K(x,x)=0$.
		We say $K$ is \emph{conditionally negative definite} (CND) if for every $m\in\mathbb N$,
		every $x_1,\dots,x_m\in X$, and every $c_1,\dots,c_m\in\RR$ with $\sum_{i=1}^m c_i=0$, one has
		\[
		\sum_{i,j=1}^m c_i c_j\,K(x_i,x_j)\le 0.
		\]
		A metric $d$ on $X$ is \emph{Hilbertian} if there exist a Hilbert space $\mathcal H$ and an embedding $\Phi:X\to\mathcal H$
		such that $d(x,y)=\|\Phi(x)-\Phi(y)\|_{\mathcal H}$ for all $x,y\in X$.
	\end{defin}
	
	\begin{lem}[CND kernels produce Hilbert embeddings]\label{lem:cnd-embed}
		Let $K$ be a CND kernel on $X$.
		Then there exist a Hilbert space $\mathcal H$ and a map $\Phi:X\to\mathcal H$ such that
		\[
		K(x,y)=\|\Phi(x)-\Phi(y)\|_{\mathcal H}^{2}\qquad(x,y\in X).
		\]
		Consequently, a metric space $(X,d)$ is Hilbertian if and only if $d^2$ is CND.
	\end{lem}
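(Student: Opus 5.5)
The plan is the classical Schoenberg construction: build a positive semidefinite kernel from $K$ and realize it as a Gram matrix in a Hilbert space.

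\emph{Degenerate case.} If $X=\emptyset$ there is nothing to prove, so fix a base point $x_0\in X$.

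\emph{Step 1: the Schoenberg kernel.} Define
\[
k(x,y):=\tfrac12\bigl(K(x,x_0)+K(y,x_0)-K(x,y)\bigr),\qquad x,y\in X.
\]
I will show $k$ is positive semidefinite. Take $x_1,\dots,x_m\in X$ and $c_1,\dots,c_m\in\RR$. Put $c_0:=-\sum_i c_i$ and adjoin the point $x_0$, so the coefficient vector $(c_0,c_1,\dots,c_m)$ sums to $0$. Expanding $\sum_{i,j=0}^m c_ic_jK(x_i,x_j)$ and using $K(x_0,x_0)=0$ and symmetry gives
\[
\sum_{i,j=0}^m c_ic_jK(x_i,x_j)=\sum_{i,j=1}^m c_ic_jK(x_i,x_j)-2\Bigl(\sum_j c_j\Bigr)\sum_i c_iK(x_i,x_0),
\]
and the right-hand side equals $-2\sum_{i,j=1}^m c_ic_jk(x_i,x_j)$. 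The CND hypothesis makes the left-hand side $\le 0$, hence $\sum_{i,j=1}^m c_ic_jk(x_i,x_j)\ge 0$. This bookkeeping identity is the only computation needing care; it is the main, though routine, step.

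\emph{Step 2: the Hilbert space.} Let $\mathcal H_0$ be the space of finitely supported functions $X\to\RR$, with semi-inner product $\langle\sum a_x\delta_x,\sum b_y\delta_y\rangle:=\sum a_xb_yk(x,y)$. This form is positive semidefinite by Step 1. Quotient by its null space and complete to obtain a real Hilbert space $\mathcal H$; if a complex space is wanted, complexify. Set $\Phi(x)$ to be the class of $\delta_x$. Since $k(x,x)=K(x,x_0)$ (using $K(x,x)=0$),
\[
\|\Phi(x)-\Phi(y)\|^2=k(x,x)+k(y,y)-2k(x,y)=K(x,y).
\]

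\emph{Step 3: the equivalence.} If $d^2$ is CND, Steps 1--2 applied to $K=d^2$ give an embedding with $\|\Phi(x)-\Phi(y)\|=d(x,y)$, so $d$ is Hilbertian. Conversely, suppose $d(x,y)=\|\Phi(x)-\Phi(y)\|$ and $\sum c_i=0$. Expand
\[
\|\Phi(x_i)-\Phi(x_j)\|^2=\|\Phi(x_i)\|^2+\|\Phi(x_j)\|^2-2\,\mathrm{Re}\langle\Phi(x_i),\Phi(x_j)\rangle.
\]
The norm terms vanish after summation against $c_ic_j$ because $\sum c_i=0$, leaving
\[
\sum_{i,j}c_ic_jd(x_i,x_j)^2=-2\Bigl\|\sum_i c_i\Phi(x_i)\Bigr\|^2\le 0.
\]
Thus $d^2$ is CND.
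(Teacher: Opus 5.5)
Your proof is correct and is essentially the paper's construction. The paper puts the form $-\tfrac12\sum_{x,y}u(x)v(y)K(x,y)$ on zero-sum finitely supported functions and sets $\Phi(x)=[\delta_x-\delta_{x_0}]$; under $\delta_x\mapsto\delta_x-\delta_{x_0}$ that form is exactly your kernel $k$, so your Step~1 is the same positivity check written in centered form, and your Step~3 spells out the converse (squared Hilbert distances are CND) that the paper only asserts.
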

	
	\begin{proof}
		Fix $x_0\in X$.
		Let $\mathcal V$ be the real vector space of finitely supported functions $u:X\to\RR$ with $\sum_{x\in X}u(x)=0$.
		Define a symmetric bilinear form on $\mathcal V$ by
		\[
		\langle u,v\rangle_0:=-\frac12\sum_{x,y\in X}u(x)v(y)\,K(x,y).
		\]
		CND of $K$ implies $\langle u,u\rangle_0\ge 0$ for all $u\in\mathcal V$.
		Quotient by the null space and complete to obtain a Hilbert space $\mathcal H$.
		For $x\in X$ let $u_x:=\delta_x-\delta_{x_0}\in\mathcal V$ and set $\Phi(x):=[u_x]\in\mathcal H$.
		A direct expansion shows $\|\Phi(x)-\Phi(y)\|^2=K(x,y)$.
		The final assertion follows by applying this to $K=d^2$ and observing that squared Hilbert distances are CND.
	\end{proof}
	
	\begin{lem}[Exponentials and CND]\label{lem:cnd-exp}
		Let $K:X\times X\to\RR$ be symmetric with $K(x,x)=0$.
		If $K$ is CND, then for every $\beta>0$ the kernel $\exp(-\beta K)$ is positive definite.
		Conversely, if $\exp(-\beta K)$ is positive definite for all $\beta>0$, then $K$ is CND.
	\end{lem}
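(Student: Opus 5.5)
The forward direction is Schoenberg's theorem, and I would route it through the Hilbert embedding of Lemma~\ref{lem:cnd-embed}. Since $K$ is CND, that lemma gives a Hilbert space $\mathcal H$ and a map $\Phi$ with $K(x,y)=\|\Phi(x)-\Phi(y)\|^2$. It then suffices to show that for any Hilbert space the kernel $(u,v)\mapsto \exp(-\beta\|u-v\|^2)$ is positive definite. Expand
\[
e^{-\beta\|u-v\|^2}=e^{-\beta\|u\|^2}\,e^{-\beta\|v\|^2}\,e^{2\beta\langle u,v\rangle}.
\]
The kernel $\langle u,v\rangle$ is positive definite, being a Gram kernel. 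By the Schur product theorem its entrywise powers $\langle u,v\rangle^k$ are positive definite. A nonnegative combination of these, $e^{2\beta\langle u,v\rangle}=\sum_k (2\beta)^k\langle u,v\rangle^k/k!$, is a pointwise limit of positive definite kernels and hence positive definite. Finally, multiplying by $g(u)g(v)$ with $g(u)=e^{-\beta\|u\|^2}$ preserves positive definiteness, because $\sum c_i\bar c_j g(u_i)g(u_j)k_{ij}$ is a quadratic form in the vector $c_ig(u_i)$. Pulling back along $\Phi$ gives positive definiteness of $\exp(-\beta K)$ on $X$.

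An alternative that avoids the embedding altogether is the direct approach. Fix $x_0$ and write
\[
K(x,y)=K(x,x_0)+K(y,x_0)-2L(x,y),
\qquad
L(x,y):=\tfrac12\bigl(K(x,x_0)+K(y,x_0)-K(x,y)\bigr).
\]
CND of $K$ says exactly that $L$ is positive definite: test the CND condition with the coefficients $c_1,\dots,c_m$ together with $-\sum c_i$ placed at $x_0$. The same Schur and exponential argument then applies to $e^{2\beta L}$. Either route works, and I would present the embedding version since Lemma~\ref{lem:cnd-embed} is already available.

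For the converse, assume $\exp(-\beta K)$ is positive definite for every $\beta>0$. Take $c_1,\dots,c_m$ with $\sum c_i=0$. Then
\[
0\le \sum_{i,j} c_ic_j\,e^{-\beta K(x_i,x_j)} = \sum_{i,j} c_ic_j\bigl(e^{-\beta K(x_i,x_j)}-1\bigr),
\]
where the constant term drops out because $\sum_{i,j}c_ic_j=(\sum c_i)^2=0$. Divide by $\beta$ and let $\beta\downarrow0$. Since $(e^{-\beta k}-1)/\beta\to -k$, this gives $-\sum c_ic_jK(x_i,x_j)\ge0$, which is CND.

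The only delicate point is the Schur product and limit step in the forward direction. Positive definiteness is preserved under entrywise products, under nonnegative sums, and under pointwise limits. Each of these is checked on finite matrices, so the argument is standard.
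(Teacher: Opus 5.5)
Your proposal is correct and follows essentially the same route as the paper. For the forward direction, both use the embedding of Lemma~\ref{lem:cnd-embed}, the factorization $e^{-\beta\|u-v\|^2}=e^{-\beta\|u\|^2}e^{-\beta\|v\|^2}e^{2\beta\langle u,v\rangle}$, and positivity of the exponential power series; your Schur-product step is the paper's ``tensor powers.'' For the converse, both look at the first-order behaviour of $\sum_{i,j}c_ic_je^{-\beta K(x_i,x_j)}$ as $\beta\downarrow 0$; you phrase it as a difference quotient, and the paper phrases it as $F'(0^+)\ge 0$.
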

	
	\begin{proof} This is a standard equivalence due to Schoenberg; see \cite{Sch38} (or \cite[Ch.~3]{BCR84}).
		Assume first that $K$ is CND.
		By Lemma~\ref{lem:cnd-embed}, $K(x,y)=\|\Phi(x)-\Phi(y)\|^2$ for some Hilbert embedding.
		For $u,v\in\mathcal H$,
		\[
		e^{-\beta\|u-v\|^2}=e^{-\beta\|u\|^2}e^{-\beta\|v\|^2}e^{2\beta\langle u,v\rangle}
		\]
		and $e^{2\beta\langle u,v\rangle}$ is positive definite as a power series with positive coefficients (tensor powers).
		Multiplying by the positive diagonal factors preserves positive definiteness.
		Hence $\exp(-\beta K)$ is positive definite.
		
		Conversely, assume $\exp(-\beta K)$ is positive definite for all $\beta>0$.
		Fix $x_1,\dots,x_m\in X$ and $c_1,\dots,c_m\in\RR$ with $\sum_i c_i=0$.
		Define
		\[
		F(\beta):=\sum_{i,j=1}^m c_i c_j\,e^{-\beta K(x_i,x_j)}\ge 0\qquad(\beta>0),
		\]
		and note $F(0)=\sum_{i,j}c_ic_j=(\sum_i c_i)^2=0$.
		Thus $F'(0^+)\ge 0$.
		But
		\[
		F'(\beta)=-\sum_{i,j} c_i c_j\,K(x_i,x_j)\,e^{-\beta K(x_i,x_j)},
		\]
		so $F'(0^+)=-\sum_{i,j}c_i c_j K(x_i,x_j)\ge 0$, i.e.\ $\sum_{i,j}c_ic_jK(x_i,x_j)\le 0$.
		Hence $K$ is CND.
	\end{proof}
	
	\subsection{Non-Hilbertianity on $M_n^{++}(\CC)$ for $n\ge 2$}
	
	In this subsection let $\A=M_n(\CC)$ and let $\tau=\frac1n\Tr$.

	\begin{prop}[QJSD is not Hilbertian on $M_n^{++}(\CC)$ for $n\ge 2$]\label{prop:not-hilbertian-cone}
		Let $n\ge 2$. The QJSD metric $D_{\tau,\etaop}$ on $M_n^{++}(\CC)$ is \emph{not} Hilbertian.
		Consequently, $D_{\tau,\etaop}$ is not Hilbertian on the full cone $M_n^{+}(\CC)$.
	\end{prop}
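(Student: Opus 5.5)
The plan is to use Lemma~\ref{lem:cnd-embed}: $D_{\tau,\etaop}$ is Hilbertian on a set $X$ if and only if $J_{\tau,\etaop}$ is conditionally negative definite on $X$. It therefore suffices to exhibit finitely many points $A_1,\dots,A_m\in M_n^{++}(\CC)$ and real coefficients $c_1,\dots,c_m$ with $\sum_i c_i=0$ such that
\[
\sum_{i,j=1}^m c_ic_j\,J_{\tau,\etaop}(A_i,A_j)>0 .
\]
The statement about $M_n^{+}(\CC)$ then follows at once, since Hilbertianity passes to subsets. Normalizing by $\tau=\frac1n\Tr$ instead of $\Tr$ only rescales $J$ by a positive constant, which is irrelevant to the sign.

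\textbf{Step 1: reduction to $n=2$.} Given $2\times 2$ positive definite $A,B$, set $\widehat A=A\oplus \Id_{n-2}$ and $\widehat B=B\oplus\Id_{n-2}$. Functional calculus acts blockwise and $\Tr$ is additive over blocks. In $J_\eta$ the identity block contributes $\frac12\Tr\etaop(\Id)+\frac12\Tr\etaop(\Id)-\Tr\etaop(\Id)=0$. Hence
\[
\Tr$-$J_\eta(\widehat A,\widehat B)=J_\eta(A,B),
\]
where $J_\eta$ is computed with $\Tr$ on both sides. A violating configuration in dimension $2$ therefore lifts to every $n\ge 2$, and it suffices to treat $n=2$.

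\textbf{Step 2: a non-CND configuration in $M_2^{++}(\CC)$.} Commuting families cannot work. On simultaneously diagonal matrices $J_\eta$ is a sum of scalar Jensen--Shannon gaps, and these are CND. Likewise, infinitesimally close points only see the Hessian, which is a positive quadratic form and hence CND. The witness must therefore involve genuinely non-commuting matrices at finite separation.

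I would try a small, symmetric configuration first. Take $A_0=\Id$ together with $A_k=\Id+a\,P_{\theta_k}$, where the $P_{\theta_k}$ are rank-one projections onto unit vectors at angles $\theta_k$ (for example $\theta_k=k\pi/m$), possibly adding a few scaled copies $sA_k$. All of these matrices are positive definite.

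For rank-one perturbations of $\Id$, the eigenvalues of $\frac{A_i+A_j}{2}$ are explicit functions of $a$ and $\cos^2(\theta_i-\theta_j)$. This gives the matrix $\bigl(J(A_i,A_j)\bigr)_{i,j}$ in closed form. Conditional negative definiteness is equivalent to the matrix
\[
G_{ij}=J(A_i,A_0)+J(A_j,A_0)-J(A_i,A_j),\qquad i,j\ge 1,
\]
being positive semidefinite. I would therefore look for a negative eigenvalue of $G$, scanning the parameter $a$ (and $s$).

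\textbf{Main obstacle.} The hard step is finding parameters that actually produce a strictly negative eigenvalue of $G$, and then certifying it rigorously. My first attempt is a numerical search: maximize $\sum_{i,j} c_ic_jJ(A_i,A_j)$ subject to $\sum_i c_i=0$ and $\|c\|=1$ over configurations of about $4$--$6$ points. Once a witness is found, I would pick rational data (angles with rational cosines, rational $a$) so that each eigenvalue is an explicit algebraic number. The positivity of the resulting quadratic form can then be verified with rigorous error bounds, or by interval arithmetic. This certified numerical witness completes the proof of Proposition~\ref{prop:not-hilbertian-cone}.
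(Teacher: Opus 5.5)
You have the same architecture as the paper: the CND criterion of Lemma~\ref{lem:cnd-embed}, the block embedding $X\mapsto X\oplus\Id_{n-2}$ to pass from $n=2$ to all $n\ge 2$, and restriction to get the statement on $M_n^{+}(\CC)$. These reductions are correct. The genuine gap is that you never exhibit a witness. Beyond the routine reductions, the entire content of the proposition is the existence of $X_1,\dots,X_m\in M_2^{++}(\CC)$ and real $c_i$ with $\sum_i c_i=0$ and $\sum_{i,j}c_ic_j\,J_{\tau,\etaop}(X_i,X_j)>0$. Your Step~2 only proposes an ansatz and a numerical search. You yourself leave the existence of parameters giving a negative eigenvalue of $G$ as the unresolved ``main obstacle.'' As written, nothing excludes that $J_{\tau,\etaop}$ is CND on $M_2^{++}(\CC)$. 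This is not a formality: the analogous statement on the qubit state space is false, since QJSD is Hilbertian there.

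Your ansatz is also more constrained than you suggest. By homogeneity, $J_{\tau,\etaop}(cA,cB)=c\,J_{\tau,\etaop}(A,B)$. So on each trace slice of $M_2^{+}(\CC)$ the kernel is a positive multiple of QJSD on $\mathcal S(\CC^2)$, which is CND by the Carlen--Lieb--Seiringer theorem. The points $\Id+aP_{\theta_k}$ with a common $a$ all have trace $2+a$, so they can never violate CND among themselves. Only $\Id$ and your rescaled copies can create a violation. In dimension $2$, a witness must therefore be non-commuting and must also spread over genuinely different trace levels.

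The paper supplies exactly this in Example~\ref{ex:explicit-integer}. It uses five integer matrices with traces $3,10,9,16,17$ and coefficients $c=(-10,10,10,-20,10)$, for which $S_2\approx 9.8114>0$. Positivity is certified by directed-rounding interval arithmetic using the closed-form eigenvalues of $2\times 2$ matrices. Inserting that witness into your Step~2, or another one you actually find and certify, would complete your proof.
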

	
\begin{proof}
	Recall that a metric space $(X,d)$ is Hilbertian if and only if $d^2$ is CND
	(Lemma~\ref{lem:cnd-embed}). Thus it suffices to show that the kernel
	$J_{\tau,\etaop}$ is not CND.
	
	\medskip
	\noindent\emph{Step 1: the case $n=2$.}
	By Example~\ref{ex:explicit-integer}, there exist $X_1,\dots,X_m\in M_2^{++}(\mathbb{C})$ and $c_1,\dots,c_m\in\RR$
	with $\sum_{i=1}^m c_i=0$ such that
	\[
	\sum_{i,j=1}^m c_ic_j\,J_{\tau_2,\etaop}(X_i,X_j)>0,
	\qquad \text{where }\ \tau_2=\tfrac12\Tr \text{ on }M_2(\CC).
	\]
	Hence $J_{\tau_2,\etaop}$ is not CND on $M_2^{++}(\mathbb{C})$, and therefore the metric
	$D_{\tau_2,\etaop}=\sqrt{J_{\tau_2,\etaop}}$ is not Hilbertian on $M_2^{++}(\mathbb{C})$.
	
	\medskip
	\noindent\emph{Step 2: embedding into $M_n^{++}(\CC)$ for $n\ge 2$.}
	Let $n\ge 2$ and write $\tau_n=\tfrac1n\Tr$ on $M_n(\CC)$.
	Define the block embedding $\iota:M_2^{++}(\mathbb{C})\to M_n^{++}(\CC)$ by
	\[
	\iota(X):=X\oplus \Id_{n-2}.
	\]
	Since $\etaop(1)=1\cdot\log 1=0$ and $\etaop$ acts blockwise on block-diagonal matrices, we have
	\[
	\etaop(\iota(X))=\etaop(X)\oplus 0_{n-2},
	\qquad
	\etaop\!\left(\frac{\iota(X)+\iota(Y)}{2}\right)
	=
	\etaop\!\left(\frac{X+Y}{2}\right)\oplus 0_{n-2}.
	\]
	Therefore, for all $X,Y\in M_2^{++}(\mathbb{C})$,
	\[
	J_{\tau_n,\etaop}(\iota(X),\iota(Y))
	=
	\frac1n\Tr\!\left(
	\frac12\etaop(X)+\frac12\etaop(Y)-\etaop\!\left(\frac{X+Y}{2}\right)
	\right)
	=
	\frac{2}{n}\,J_{\tau_2,\etaop}(X,Y).
	\]
	Now set $Y_i:=\iota(X_i)\in M_n^{++}(\CC)$ and use the same coefficients $c_i$ as in Step~1:
	\[
	\sum_{i,j=1}^m c_ic_j\,J_{\tau_n,\etaop}(Y_i,Y_j)
	=
	\frac{2}{n}\sum_{i,j=1}^m c_ic_j\,J_{\tau_2,\etaop}(X_i,X_j)
	>0.
	\]
	Hence $J_{\tau_n,\etaop}$ is not CND on $M_n^{++}(\CC)$, so $D_{\tau_n,\etaop}$ is not Hilbertian on $M_n^{++}(\CC)$.
	
	\medskip
Finally, if $D_{\tau_n,\etaop}$ were Hilbertian on $M_n(\CC)_+$, there would exist an embedding $\Phi:M_n(\CC)_+\to\mathcal H$
into a Hilbert space with $D_{\tau_n,\etaop}(X,Y)=\|\Phi(X)-\Phi(Y)\|$.
Restricting $\Phi$ to the subset $M_n^{++}(\CC)$ would make $D_{\tau_n,\etaop}$ Hilbertian on $M_n^{++}(\CC)$, contradicting the conclusion above.

\end{proof}

	\subsection{Non-Hilbertianity on density matrices for $n\ge 3$}
	
	Let
	\[
	\mathcal D_n:=\{\rho\in M_n^{+}(\CC):\Tr(\rho)=1\}
	\]
	be the density matrices.

\begin{prop}[QJSD is not Hilbertian on $\mathcal D_n$ for $n\ge 3$]\label{prop:not-hilbertian-density}
	For every $n\ge 3$, the QJSD metric $D_{\tau,\etaop}=\sqrt{J_{\tau,\etaop}}$ is not Hilbertian on
	$\mathcal D_n$.
\end{prop}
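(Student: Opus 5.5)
We plan to reduce to the $2\times 2$ positive-definite counterexample of Step~1 in the proof of Proposition~\ref{prop:not-hilbertian-cone} (Example~\ref{ex:explicit-integer}). The idea is to embed $M_2^{++}(\CC)$ into $\mathcal D_n$ by a block construction that makes the QJSD kernel on the image a positive multiple of the kernel on $M_2^{++}$. By Lemma~\ref{lem:cnd-embed}, it then suffices to exhibit points of $\mathcal D_n$ and real coefficients summing to zero that violate conditional negative definiteness.

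\textbf{Handling the trace constraint.} The only obstruction is the constraint $\Tr\rho=1$, and we handle it using the extra dimensions available when $n\ge 3$. The counterexample matrices $X_1,\dots,X_m\in M_2^{++}(\CC)$ may have different traces. We first rescale them by a common factor $c>0$ so that $\Tr(X_i)<1$ for all $i$; by the homogeneity $J_{\tau,\etaop}(cA,cB)=c\,J_{\tau,\etaop}(A,B)$ from the proof of Lemma~\ref{lem:qjsd-vanish}, this preserves the strict violation of CND. We then set
\[
\rho_i:=X_i\oplus (1-\Tr X_i)\oplus 0_{n-3}\in\mathcal D_n .
\]
Since $\etaop$ acts blockwise and $\etaop(0)=0$, we get
\[
J_{\tau_n,\etaop}(\rho_i,\rho_j)=\tfrac{2}{n}J_{\tau_2,\etaop}(X_i,X_j)+\tfrac1n\,j(s_i,s_j),
\]
where $s_i:=1-\Tr X_i\ge 0$ and $j(x,y)$ is the scalar Jensen--Shannon gap $\frac12\eta(x)+\frac12\eta(y)-\eta(\frac{x+y}{2})$.

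\textbf{Main obstacle.} The difficulty is the extra scalar term $\frac1n\sum_{i,j} c_ic_j\, j(s_i,s_j)$, which is $\le 0$ because the scalar JS kernel is CND (this follows from the classical integral representation, or equivalently from the $n=1$ case of Proposition~\ref{prop:qjsd-integral} combined with Corollary~\ref{cor:scalar-metric}). It could therefore cancel the positive $2\times 2$ contribution. I would try two remedies in order.

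\emph{First remedy: equalize traces.} Replace each $X_i$ by $X_i/\Tr X_i$ and rescale them all by the same constant. This is legitimate only if the counterexample can be chosen among matrices of equal trace. To arrange it, I would look for the violation inside the normalized qubit states themselves. Carlen--Lieb--Seiringer rules this out for $2\times 2$ density matrices, so equal traces cannot be imposed in dimension $2$ directly.

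\emph{Second remedy: make the scalar term negligible.} Instead, use the scale freedom. Write $X_i=cY_i$ with $c\downarrow 0$. Then the $2\times 2$ term is exactly $\frac{2c}{n}\sum c_ic_jJ(Y_i,Y_j)$, while the scalar term involves $s_i=1-c\,\Tr Y_i$. Near $s=1$ the scalar gap is smooth, and $j(s_i,s_j)\approx \frac18(s_i-s_j)^2=\frac{c^2}{8}(\Tr Y_i-\Tr Y_j)^2$. The scalar contribution is therefore $O(c^2)$, which is dominated by the $O(c)$ positive term for small $c$. The resulting strict positivity gives non-CND on $\mathcal D_n$, hence non-Hilbertianity by Lemma~\ref{lem:cnd-embed}.

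The step to verify carefully is the uniform $O(c^2)$ Taylor bound for $j$ near $(1,1)$. This follows as in Lemma~\ref{lem:qjsd-vanish}, since $\eta''(x)=1/x$ is bounded near $1$.
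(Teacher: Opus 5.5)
Your argument is correct. It uses the same block construction as the paper (your scale $c$ is the paper's $1/T$), but it proves positivity by a different mechanism.

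The paper fixes $T=40$, certifies $S_3>0$ on $\mathcal D_3$ by a second interval-arithmetic computation, and then pads with $0_{n-3}$. You instead write $\rho_i=cY_i\oplus s_i\oplus 0_{n-3}$ and split
\[
J_{\tau_n,\eta}(\rho_i,\rho_j)=\tfrac{2c}{n}J_{\tau_2,\eta}(Y_i,Y_j)+\tfrac1n j(s_i,s_j).
\]
By homogeneity the qubit part contributes exactly $\frac{2c}{n}S_2$. The scalar part is $O(c^2)$, e.g.\ via $0\le j(x,y)\le (x-y)^2/(8\min\{x,y\})$. Hence only the $M_2^{++}$ certificate $S_2>0$ is needed. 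The argument also shows more generally that any CND violation on $M_k^{++}(\CC)$ yields one on $\mathcal D_{k+1}$.

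Your decomposition is exactly what underlies the paper's witness. For $n=3$ and $c=1/40$ one has $\frac{2c}{3}S_2\approx 0.1635$ versus $S_3\approx 0.1601$; the gap is the negative scalar term.

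What the paper's route buys is a concrete witness at a moderate scale. With the example's data, your termwise bound only guarantees positivity for $c$ of order $10^{-3}$. Reaching $c=1/40$ analytically would need a sharper estimate that exploits $\sum_i c_i=0$; the leading scalar contribution is $-\frac{c^2}{4n}\bigl(\sum_i c_i\Tr Y_i\bigr)^2$.

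Two minor points:
\begin{enumerate}
\item The \emph{first remedy} is a dead end and can be deleted.
\item The sign of the scalar term is never used, only its size, so the appeal to conditional negative definiteness of the scalar kernel is unnecessary.
\end{enumerate}
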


\begin{proof}
	Let $\tau_n=\frac1n\Tr$ on $M_n(\CC)$.
	By Lemma~\ref{lem:cnd-embed}, a metric is Hilbertian if and only if its square is conditionally negative definite (CND).
	Thus it suffices to show that the kernel $J_{\tau_n,\etaop}$ is not CND on $\mathcal D_n$.
	
	\medskip
	\noindent\emph{Step 1: a CND violation already on $\mathcal D_3$.}
	In Example~\ref{ex:explicit-integer} we exhibit density matrices
	$\rho_1,\dots,\rho_5\in\mathcal D_3$ and integers $c_1,\dots,c_5$ with $\sum_i c_i=0$ such that
	\[
	\sum_{i,j=1}^5 c_ic_j\,J_{\tau_3,\etaop}(\rho_i,\rho_j) >0.
	\]
	Hence $J_{\tau_3,\etaop}$ is not CND on $\mathcal D_3$, and therefore $D_{\tau_3,\etaop}$ is not Hilbertian on $\mathcal D_3$.
	
	\medskip
	\noindent\emph{Step 2: embedding $\mathcal D_3$ into $\mathcal D_n$ for $n>3$.}
	For $n>3$, embed $\mathcal D_3$ into $\mathcal D_n$ via
	\[
	\iota(\rho):=\rho\oplus 0_{n-3}\qquad(\rho\in\mathcal D_3).
	\]
	Since $\etaop(0)=0$ and $\etaop$ acts blockwise on block-diagonal matrices, we have for all
	$\rho,\sigma\in\mathcal D_3$,
	\[
	J_{\tau_n,\etaop}(\iota(\rho),\iota(\sigma))
	=\frac{1}{n}\Tr\!\left(\frac12\etaop(\rho)+\frac12\etaop(\sigma)-\etaop\!\left(\frac{\rho+\sigma}{2}\right)\right)
	=\frac{3}{n}\,J_{\tau_3,\etaop}(\rho,\sigma).
	\]
	Therefore the same CND violation on $\mathcal D_3$ persists on $\mathcal D_n$.
	This proves that $D_{\tau_n,\etaop}$ is not Hilbertian on $\mathcal D_n$ for every $n\ge 3$.
\end{proof}

\begin{example}[Explicit CND violation with integer matrices (certified)]\label{ex:explicit-integer}
	Let $\tau_2=\frac12\Tr$ on $M_2(\CC)$ and $\etaop(x)=x\log x$ with $\etaop(0)=0$.
	Consider the following $2\times 2$ real symmetric \emph{integer} matrices:
	\[
	X_1=\begin{pmatrix}2&1\\[2pt]1&1\end{pmatrix},\quad
	X_2=\begin{pmatrix}9&2\\[2pt]2&1\end{pmatrix},\quad
	X_3=\begin{pmatrix}2&1\\[2pt]1&7\end{pmatrix},\quad
	X_4=\begin{pmatrix}8&5\\[2pt]5&8\end{pmatrix},\quad
	X_5=\begin{pmatrix}8&8\\[2pt]8&9\end{pmatrix}.
	\]
	They all lie in $M_2^{++}(\mathbb{C})$ since
	\[
	\det(X_1)=1,\ \det(X_2)=5,\ \det(X_3)=13,\ \det(X_4)=39,\ \det(X_5)=8.
	\]
	Let
	\[
	c=(c_1,\dots,c_5)=(-10,\ 10,\ 10,\ -20,\ 10),\qquad \sum_{i=1}^5 c_i=0.
	\]
	
	\medskip
	\noindent\textbf{(i) Violation on $M_2^{++}(\mathbb{C})$.}
	Define
	\[
	S_2:=\sum_{i,j=1}^5 c_ic_j\,J_{\tau_2,\etaop}(X_i,X_j).
	\]
	A directed-rounding interval-arithmetic computation yields the rigorous enclosure
	\[
	S_2\in\bigl[\,9.8113517061626681,\ 9.8113517062313917\,\bigr],
	\]
	hence $S_2>0$ and $J_{\tau_2,\etaop}$ is not CND on $M_2^{++}(\mathbb{C})$.
	
	\medskip
	\noindent\textbf{(ii) Violation on $\mathcal D_3$.}
	Fix $T=40$ (note that $T>\max_i\Tr(X_i)=17$) and define density matrices in $M_3(\CC)$ by the block embedding
	\[
	\rho_i:=\frac{1}{T}X_i\ \oplus\ \left(1-\frac{\Tr(X_i)}{T}\right)\in\mathcal D_3,
	\qquad i=1,\dots,5.
	\]
	(For instance, $\rho_1=\frac1{40}\!\begin{pmatrix}2&1\\1&1\end{pmatrix}\oplus\frac{37}{40}$,
	$\rho_4=\frac1{40}\!\begin{pmatrix}8&5\\5&8\end{pmatrix}\oplus\frac{3}{5}$, etc.)
	Let $\tau_3=\frac13\Tr$ and set
	\[
	S_3:=\sum_{i,j=1}^5 c_ic_j\,J_{\tau_3,\etaop}(\rho_i,\rho_j).
	\]
	The same certified interval method gives the enclosure
	\[
	S_3\in\bigl[\,0.1601205745051536,\ 0.16012057450782458\,\bigr],
	\]
	so $S_3>0$. Therefore $J_{\tau_3,\etaop}$ is not CND on $\mathcal D_3$.
\end{example}
\smallskip
\noindent\emph{Reproducibility.}
All inputs in this example are given by integers (and, after the density-matrix embedding, rationals with fixed denominator), so the verification can be carried out without any floating-point parsing ambiguity. A fully rigorous directed-rounding interval-arithmetic workflow and pseudo-code are provided in Appendix~\ref{app:certified}.

\begin{remark}[Reproducibility and certification]
	The positivity statements in Example~\ref{ex:explicit-integer} can be verified in a fully rigorous way by
	directed-rounding interval arithmetic.
	
	\smallskip
	\noindent\emph{(1) Exact input data.}
	All matrix entries are integers, and the coefficients $c_i$ are integers with $\sum_i c_i=0$.
	Moreover, in the density-matrix embedding we fix $T=40$, so every entry of $\rho_i$ is a rational number with
	denominator $T$ (and the last diagonal entry is $1-\Tr(X_i)/T$).
	Thus there is no floating-point parsing ambiguity: all inputs can be treated as exact rationals before interval evaluation.
	
	\smallskip
	\noindent\emph{(2) Reduction to one-dimensional certified operations.}
	For a $2\times2$ real symmetric matrix $X=\begin{pmatrix}a&b\\ b&d\end{pmatrix}$,
	the eigenvalues are given in closed form by
	\[
	\lambda_\pm(X)=\frac{(a+d)\pm\sqrt{(a-d)^2+4b^2}}{2}.
	\]
	Hence
	\[
	\Tr(\etaop(X))=\etaop(\lambda_+(X))+\etaop(\lambda_-(X)),
	\]
	so computing $J_{\tau_2,\etaop}(X_i,X_j)$ reduces to interval evaluation of
	addition, multiplication, square root, and logarithm in one real variable (with directed rounding).
	For the block-diagonal states $\rho_i\in\mathcal D_3$, one additionally evaluates the scalar term
	$\etaop\!\left(1-\Tr(X_i)/T\right)$ and the analogous term for $(\rho_i+\rho_j)/2$; no higher-dimensional
	spectral computation is needed.
	
	\smallskip
	\noindent\emph{(3) Certified positivity.}
	With directed rounding, the computed intervals provably enclose the exact values of $S_2$ and $S_3$;
	a strictly positive lower endpoint therefore certifies the CND violation.
	This workflow can be implemented in any standard verified interval package
	(e.g.\ Arb, INTLAB, or Julia \texttt{IntervalArithmetic}).
\end{remark}

	% ============================================================
	\section{Jensen generators yielding metrics}\label{sec:jensen-generators}
	% ============================================================

	\begin{defin}[Operator convex]\label{def:opconvex}
		A function $f:(0,\infty)\to\RR$ is \emph{operator convex} if for every $n\in\mathbb N$, all $X,Y\in M_n^{++}(\CC)$ and $\lambda\in[0,1]$,
		\[
		f(\lambda X+(1-\lambda)Y)\le \lambda f(X)+(1-\lambda)f(Y)
		\]
		in the Loewner order.
	\end{defin}
	
	\begin{lem}[An integral representation for $f'$]\label{lem:nevanlinna}
		Let $f:(0,\infty)\to\RR$ be operator convex.
		Then $f'$ is operator monotone, and admits a Nevanlinna--Stieltjes type representation.
		More precisely, there exist constants $a\in\RR$, $b\ge 0$ and a positive Borel measure $\nu$ on $(0,\infty)$ with
		$\int_{(0,\infty)}\frac{1}{1+t}\,\dd\nu(t)<\infty$ such that
		\begin{equation}\label{eq:eta-prime-rep}
			f'(x)=a+bx+\int_{(0,\infty)}\frac{x}{x+t}\,\dd\nu(t),\qquad x>0.
		\end{equation}
	\end{lem}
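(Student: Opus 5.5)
The plan is to use the classical integral representation of operator convex functions on $(0,\infty)$ and then differentiate it. By the L\"owner--Kraus theory (see Bhatia's \emph{Matrix Analysis}, Ch.~V, or Hansen--Pedersen), an operator convex $f$ on $(0,\infty)$ can be written as
\[
f(x)=\alpha+\beta x+\gamma x^2+\int_{(0,\infty)}\Bigl(\frac{x^2}{x+t}-\text{(affine correction in }x)\Bigr)\,\dd\mu(t),
\]
with $\gamma\ge0$, $\mu\ge0$ and a suitable integrability condition on $\mu$. The cleanest route goes through a divided difference. Fix the base point $1$. Then $g(x):=\frac{f(x)-f(1)}{x-1}$ is operator monotone on $(0,\infty)$ (Kraus' theorem). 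Hence $g$ has the L\"owner representation
\[
g(x)=c_0+c_1x+\int_{(0,\infty)}\Bigl(\frac{t}{1+t}-\frac{t}{x+t}\Bigr)\,\dd\sigma(t),
\]
with $c_1\ge0$, $\sigma\ge0$ and $\int\frac{\dd\sigma(t)}{1+t}<\infty$. Writing $f(x)=f(1)+(x-1)g(x)$ and expanding gives an explicit kernel expansion of $f$ in terms of $x^2$ and the functions $\frac{x^2}{x+t}$ plus affine terms.

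The next step is to differentiate under the integral sign. Differentiation is justified by dominated convergence on compact subsets of $(0,\infty)$, using $\int\frac{\dd\sigma}{1+t}<\infty$. We have $\frac{\dd}{\dd x}\frac{x^2}{x+t}=1-\frac{t^2}{(x+t)^2}$, which produces a kernel of the form $\frac{x(x+2t)}{(x+t)^2}$. This is not of the target form $\frac{x}{x+t}$ directly. So rather than differentiating $f$, I will prove directly that $f'$ is operator monotone, and then apply L\"owner's theorem to $f'$ itself.

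\textbf{Operator monotonicity of $f'$.} This is the main obstacle. It is the known fact that the derivative of an operator convex function on $(0,\infty)$ is operator monotone: $f'(x)=\lim_{y\to x}\frac{f(y)-f(x)}{y-x}$, and the functions $x\mapsto\frac{f(x)-f(s)}{x-s}$ are operator monotone for each $s$ by Kraus. However, passing the limit in the diagonal variable does not immediately preserve operator monotonicity. I would instead argue via the representation. Write $f(x)=\alpha+\beta x+\gamma x^2+\int\bigl(\frac{x^2}{x+t}-\text{affine}\bigr)\dd\mu(t)$ and compute $f'$. Then observe the partial-fraction identity
\[
\frac{\dd}{\dd x}\frac{x^2}{x+t}=1-\frac{t^2}{(x+t)^2}=\int\text{(combination of } \tfrac{x}{x+s}\text{)}.
\]
This does not hold pointwise in $t$, so a more robust route is needed. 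Since $f'$ is increasing and, by the operator convexity of $f$, has positive L\"owner matrices (the second divided differences of $f$ are positive kernels, and the first divided differences of $f'$ are limits of these), $f'$ is operator monotone by L\"owner's theorem. Operator monotonicity on $(0,\infty)$ then yields the Pick-function representation $f'(x)=a+bx+\int_{[0,\infty)}\bigl(\frac{1}{1+t}-\frac{1}{x+t}\bigr)\dd\rho(t)$ with $\int\frac{\dd\rho}{(1+t)^2}<\infty$.

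\textbf{Rewriting into the stated form.} It remains to convert this into \eqref{eq:eta-prime-rep}. Write $\frac{1}{1+t}-\frac{1}{x+t}=\frac{x-1}{(1+t)(x+t)}$ and split it as
\[
\frac{1}{t}\cdot\frac{x}{x+t}-\frac{1}{t(1+t)}+\Bigl(\text{terms}\Bigr).
\]
Equivalently, use $\frac{1}{1+t}-\frac{1}{x+t}=\frac{1}{t}\bigl(\frac{x}{x+t}-\frac{1}{1+t}\bigr)$ for $t>0$. Setting $\dd\nu(t)=\dd\rho(t)/t$ on $(0,\infty)$ gives the kernel $\frac{x}{x+t}$. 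The constants $-\int\frac{\dd\nu}{1+t}$ are absorbed into $a$, provided they are finite. Finiteness of $\int\frac{\dd\nu}{1+t}=\int\frac{\dd\rho}{t(1+t)}$ near $t=0$ requires that $f'$ stays bounded below as $x\downarrow0$, which holds since $f$ extends continuously to $0$ (so $f'$ is integrable near $0$ and, being increasing, has a finite limit from the right). Near $t=\infty$ it follows from $\int\frac{\dd\rho}{(1+t)^2}<\infty$. An atom of $\rho$ at $t=0$ would contribute $-1/x$ to $f'$, which is excluded by the same boundedness. Justifying this boundedness near $0$ and the exchange of constants is the delicate bookkeeping step I expect to need the most care.
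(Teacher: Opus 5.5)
Your argument breaks at the step you flagged as the main obstacle, and it cannot be repaired, because the statement is false as written.

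\textbf{Operator monotonicity of $f'$ fails.} Your justification that $f'$ has positive L\"owner matrices conflates two different kernels. The L\"owner matrix of $f'$ has entries $(f')^{[1]}(x_i,x_j)=f^{[2]}(x_i,x_i,x_j)+f^{[2]}(x_i,x_j,x_j)$. These are sums of second divided differences whose third argument moves with $(i,j)$. Kraus' theorem only gives positivity of $\bigl[f^{[2]}(x_i,x_j,x_0)\bigr]_{i,j}$ with one point $x_0$ held fixed, and nothing transfers positivity from the latter to the former.

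Concretely, $f(x)=(1+x)^{-1}$ is operator convex, not affine, and continuous on $[0,\infty)$. Yet $f'(x)=-(1+x)^{-2}$ has L\"owner matrix $\begin{pmatrix}2&3/4\\ 3/4&1/4\end{pmatrix}$ at the points $0,1$, with determinant $-1/16$. So $f'$ is not even $2$-monotone; similarly for $f(x)=1/x$, where $f'=-x^{-2}$. Every right-hand side of \eqref{eq:eta-prime-rep} is operator monotone, so the representation fails for these $f$ as well.

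Your first computation, $\frac{d}{dx}\frac{x^2}{x+t}=1-\frac{t^2}{(x+t)^2}$, was detecting exactly this. Since $-(x+t)^{-2}$ is not operator monotone, the mismatch with the kernel $\frac{x}{x+t}$ is a genuine obstruction, not bookkeeping. The paper's own proof makes the same leap. From Kraus' theorem that each divided difference $g_{t_0}$ is operator monotone, it concludes ``in particular $f'$ is operator monotone''. But $f'(t_0)=g_{t_0}(t_0)$ is only a single value of each $g_{t_0}$, so this does not follow.

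\textbf{Your treatment of $t=0$ also fails.} Continuity of $f$ at $0$ is not a hypothesis of the lemma. Even when it holds, it does not give $f'(0^+)>-\infty$, since an increasing function can be integrable near $0$ and still tend to $-\infty$.

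For $f=\eta$, $f(x)=x\log x$, the derivative $f'(x)=1+\log x$ is operator monotone, and in your Pick representation $\rho$ is Lebesgue measure. Then $\int_{(0,1)}\frac{d\rho(t)}{t}=\infty$, so $d\nu=d\rho/t$ violates $\int\frac{d\nu(t)}{1+t}<\infty$. Directly: the right-hand side of \eqref{eq:eta-prime-rep} is $\ge a$, while $1+\log x\to-\infty$. The same happens for $f(x)=-\sqrt{x}$. The paper simply quotes the representation valid for operator monotone functions on $[0,\infty)$ without addressing this point.

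A correct version needs one of two changes. One option is an extra hypothesis, e.g.\ that $f'$ is operator monotone and bounded below near $0$ (i.e.\ extends to an operator monotone function on $[0,\infty)$). The other is to represent $f$ itself, as your first paragraph began to do. Writing $f(x)=f(1)+(x-1)g(x)$ with $g$ operator monotone gives kernels $\frac{(x-1)^2}{(1+t)(x+t)}$. Their Jensen gaps are multiples of those of $(x+t)^{-1}$, not of $-\log(x+t)$.
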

	
\begin{proof}
	By \cite[Theorem~3.7]{Han13} (see also \cite{BS55}), if $f$ is operator convex on $(0,\infty)$ then $f$ is differentiable and, for each $t_0>0$,
	the divided difference
	\[
	g_{t_0}(t)=
	\begin{cases}
		\dfrac{f(t)-f(t_0)}{t-t_0}, & t\neq t_0,\\[0.4em]
		f'(t_0), & t=t_0,
	\end{cases}
	\]
	is operator monotone on $(0,\infty)$. In particular, $h:=f'$ is operator monotone on $(0,\infty)$.
	
	We now invoke the Stieltjes (L\"owner) representation for operator monotone functions on $(0,\infty)$;
	see, e.g., \cite[Corollary~5.1]{Han13} (equivalently, the integral representation in \cite[Theorem~5.2]{Han13}
	can be rewritten in this form).
	Thus there exist $a\in\RR$, $b\ge 0$ and a positive Borel measure $\nu$ on $(0,\infty)$ such that
	\begin{equation*}
		h(x)=a+bx+\int_{(0,\infty)}\frac{x}{x+t}\,\dd\nu(t),\qquad x>0.
	\end{equation*}
	Moreover, $h$ is finite-valued on $(0,\infty)$ and the integral in \eqref{eq:eta-prime-rep} is finite for every $x>0$.
	In particular, evaluating at $x=1$ gives
	\[
	\int_{(0,\infty)}\frac{1}{1+t}\,\dd\nu(t)
	=
	h(1)-a-b
	<\infty,
	\]
	which is exactly the stated integrability condition.
\end{proof}

	\begin{proof}[Proof of Theorem~\ref{thm:main-opconvex}]
		Work first in a finite von Neumann algebra $(\M,\tau)$ and fix $A,B\in\M_{+}$.
		By Lemma~\ref{lem:nevanlinna}, $f'$ has the form \eqref{eq:eta-prime-rep}.
		Integrating from $1$ to $x$ yields
		\[
		f(x)=\alpha+\beta x+\frac{b}{2}x^2+\int_{(0,\infty)}\Bigl((x-1)-t\log\!\Bigl(\frac{x+t}{1+t}\Bigr)\Bigr)\,\dd\nu(t),
		\qquad x>0,
		\]
		for suitable constants $\alpha,\beta\in\RR$. 
		
		\medskip
		\noindent\emph{Uniform convergence on bounded $x$-intervals.}
		Fix $R>0$ and consider
		\[
		k_t(x):=(x-1)-t\log\!\Big(\frac{x+t}{1+t}\Big),\qquad x\ge 0,\ t>0.
		\]
	First, note that for every $L>0$ one has $\nu((0,L])<\infty$.
	Indeed, since $\frac{1}{1+t}\ge \frac{1}{1+L}$ on $(0,L]$, we get
	\[
	\nu((0,L])\le (1+L)\int_{(0,\infty)}\frac{1}{1+t}\,\dd\nu(t)<\infty.
	\]

		Next, for $x\in[0,R]$ and $t>0$ we write
		\[
		\log\!\Big(\frac{x+t}{1+t}\Big)=\log\!\Big(1+\frac{x-1}{1+t}\Big).
		\]
		By the mean value theorem for $\log$, there exists $\theta=\theta(x,t)\in(0,1)$ such that
		\[
		\log\!\Big(1+\frac{x-1}{1+t}\Big)=\frac{x-1}{t+1+\theta(x-1)}.
		\]
		Therefore
		\[
		k_t(x)
		=(x-1)\Bigl(1-\frac{t}{t+1+\theta(x-1)}\Bigr)
		=\frac{(x-1)\bigl(1+\theta(x-1)\bigr)}{t+1+\theta(x-1)}.
		\]
		Let $M:=\max\{1,R-1\}$, so $|x-1|\le M$ for $x\in[0,R]$.
		For $t\ge 2M$ we have $t+1+\theta(x-1)\ge t+1-M\ge t/2$, hence
		\[
		|k_t(x)|\le \frac{2M(1+M)}{t}\le \frac{C_R}{1+t}\qquad (x\in[0,R],\ t\ge 2M).
		\]
	On the remaining range $t\in(0,2M]$, we argue by a compactness extension.
	Define $k_0(x):=x-1$ for $x\ge 0$. Then for $t>0$,
	\[
	k_t(x)-k_0(x)=-t\log\!\Big(\frac{x+t}{1+t}\Big).
	\]
	Fix $R>0$. For $x\in[0,R]$ and $t\in(0,1]$ we have
	\[
	\Big|\log\!\Big(\frac{x+t}{1+t}\Big)\Big|
	\le |\log(x+t)|+|\log(1+t)|
	\le |\log t|+\log(R+1)+\log 2,
	\]
	hence
	\[
	\sup_{x\in[0,R]}|k_t(x)-k_0(x)|
	\le t\big(|\log t|+\log(R+1)+\log 2\big)\xrightarrow[t\downarrow 0]{}0.
	\]
	Therefore $(t,x)\mapsto k_t(x)$ extends continuously to the compact set $[0,2M]\times[0,R]$
	(by setting $k_0(x)=x-1$), and hence is bounded there.
	Consequently, there exists a constant $C_R'>0$ such that
	\[
	|k_t(x)|\le C_R' \qquad (x\in[0,R],\ 0<t\le 2M).
	\]

		Combining these bounds yields an integrable dominating function:
		\[
		\sup_{x\in[0,R]} |k_t(x)|
		\le C_R'\,\ind_{(0,2M]}(t)+\frac{C_R}{1+t}\,\ind_{(2M,\infty)}(t),
		\]
		and the right-hand side is $\nu$-integrable because $\nu((0,2M])<\infty$ and
		$\int_{(0,\infty)}\frac{1}{1+t}\,\dd\nu(t)<\infty$.
		Hence $x\mapsto \int k_t(x)\,\dd\nu(t)$ converges absolutely and uniformly on $[0,R]$.
	Consequently, for each fixed $R>0$ the scalar function
	\[
	g_R(x):=\int_{(0,\infty)} k_t(x)\,\dd\nu(t),\qquad x\in[0,R],
	\]
	is well-defined and continuous on $[0,R]$, and the convergence is uniform on $[0,R]$.
	
	\medskip
	\noindent\emph{Operator-valued integral and interchange with $\tau$.}
	Now fix $A,B\in\M_+$ and choose $R>\max\{\|A\|,\|B\|,\|\tfrac{A+B}{2}\|\}$.
	By the domination obtained above, there exists $h_R\in L^1((0,\infty),\nu)$ such that
	\[
	\sup_{x\in[0,R]}|k_t(x)|\le h_R(t)\qquad (t>0).
	\]
	Hence for each $X\in\{A,B,\tfrac{A+B}{2}\}$ the map $t\mapsto k_t(X)$ is Bochner integrable in operator norm and
	\[
	g_R(X)=\int_{(0,\infty)} k_t(X)\,\dd\nu(t)\in\M,
	\qquad
	\text{with}\quad \tau(g_R(X))=\int_{(0,\infty)} \tau(k_t(X))\,\dd\nu(t),
	\]
	since $|\tau(k_t(X))|\le \|k_t(X)\|\le h_R(t)$ and $h_R$ is $\nu$-integrable.
	
	\medskip
	Therefore we may compute the Jensen gap by interchanging $\tau$ with the $t$-integral.
	Affine terms cancel in Jensen gaps, and one obtains
		\[
		J_{\tau,f}(A,B)=\frac{b}{2}\,J_{\tau,x^2}(A,B)+\int_{(0,\infty)}J_{\tau,k_t}(A,B)\,\dd\nu(t),
		\]
		where $k_t(x):=(x-1)-t\log\!\big(\frac{x+t}{1+t}\big)$.
		Here $J_{\tau,x^2}(A,B)=\frac14\normtau{A-B}^2$.
		Moreover, $k_t(x)$ differs from $-t\log(x+t)$ by an affine function in $x$ (depending on $t$), hence
		\[
		J_{\tau,k_t}(A,B)=t\,d_\tau(A+t\Id,B+t\Id)^2.
		\]
		This gives \eqref{eq:opconvex-decomp-intro}.
		
	Define a measure space $\Omega:=\{0\}\cup(0,\infty)$ where $\{0\}$ carries counting measure and $(0,\infty)$ carries $\nu$.
	For $X,Y\in\M_+$ set
	\[
	w_{X,Y}(0):=\sqrt{\frac{b}{8}}\;\normtau{X-Y},
	\qquad
	w_{X,Y}(t):=\sqrt{t}\,d_\tau(X+t\Id,Y+t\Id)\quad (t>0).
	\]
	Then \eqref{eq:opconvex-decomp-intro} reads
	\[
	J_{\tau,f}(X,Y)=\|w_{X,Y}\|_{L^2(\Omega)}^2.
	\]
	For each $t\in\Omega$, $w_{A,C}(t)\le w_{A,B}(t)+w_{B,C}(t)$ by the triangle inequalities for $\normtau{\cdot}$
	and for $d_\tau(\cdot,\cdot)$ (applied to the shifted pairs when $t>0$).
	Applying Minkowski's inequality in $L^2(\Omega)$ yields the triangle inequality for $\sqrt{J_{\tau,f}}$.
	Since $f$ is not affine, either $b>0$ or $\nu\neq 0$, so $\sqrt{J_{\tau,f}}(A,B)=0$ forces $A=B$.
	
	Finally, transfer the metricity from $\M$ back to $\A$ via the GNS von Neumann envelope as in Section~\ref{sec:GNS}.
	
	\end{proof}
	
	\begin{cor}\label{cor:matrix-opconvex}
		Let $f:[0,\infty)\to\RR$ be operator convex and not affine.
		Then $\sqrt{J_f}$ defines a metric on the density matrices $\mathcal D_n$ for every $n$.
	\end{cor}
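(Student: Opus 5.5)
The plan is to specialize Theorem~\ref{thm:main-opconvex} to the matrix algebra $\A=M_n(\CC)$ with the normalized trace $\tau_n=\frac1n\Tr$, which is a faithful tracial state. The only thing to check is that $f$ meets the hypotheses of that theorem. The given $f:[0,\infty)\to\RR$ is operator convex and not affine. Its restriction to $(0,\infty)$ is then operator convex in the sense of Definition~\ref{def:opconvex}, since that definition only tests positive definite matrices. The restriction is still not affine: if $f$ were affine on $(0,\infty)$, continuity at $0$ would make it affine on $[0,\infty)$. Finally, $f$ is continuous on $[0,\infty)$ with a finite value $f(0)$. Operator convex functions on an interval are continuous in the interior, but the endpoint needs care; I would include continuity in the hypothesis, as in Theorem~\ref{thm:main-opconvex}. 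Alternatively, an operator convex (hence convex) function that is finite at $0$ is upper semicontinuous there along the segment, and one has to argue, or assume, that it is continuous at $0$.

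Theorem~\ref{thm:main-opconvex} then says that $\sqrt{J_{\tau_n,f}}$ is a metric on $M_n^+(\CC)$. Moreover $J_{\tau_n,f}=\frac1n J_f$, where $J_f$ is defined with the unnormalized trace $\Tr$. Multiplying a metric by the positive constant $n^{-1/2}$ gives again a metric, so $\sqrt{J_f}=\sqrt n\,\sqrt{J_{\tau_n,f}}$ is a metric on $M_n^+(\CC)$. This is exactly the scaling point made in Remark~\ref{rem:trace-normalization}.

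Since $\mathcal D_n\subset M_n^+(\CC)$, restricting a metric to a subset keeps all the metric axioms, and this gives the claim on the density matrices.

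The main (mild) obstacle is the endpoint continuity at $0$ needed to apply Theorem~\ref{thm:main-opconvex}. If it is not automatic, I would bypass it with a limiting argument. Apply the theorem to $f_\varepsilon(x):=f(x+\varepsilon)$, which is continuous on $[0,\infty)$ and not affine for small $\varepsilon>0$. This makes $\sqrt{J_{f_\varepsilon}}$ a metric. Then let $\varepsilon\downarrow0$: the map $J_{f_\varepsilon}(\rho,\sigma)\to J_f(\rho,\sigma)$ converges pointwise whenever $f$ is continuous on the spectra involved. The triangle inequality and nonnegativity pass to the limit. Definiteness follows from strict convexity along the segment: $f$ is not affine, and operator convexity forces $J_f(\rho,\sigma)>0$ for $\rho\neq\sigma$ via the representation \eqref{eq:opconvex-decomp-intro}.
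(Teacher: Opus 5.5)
Your main line is the paper's proof. You apply Theorem~\ref{thm:main-opconvex} to $M_n(\CC)$ with $\tau=\frac1n\Tr$, use $J_f=n\,J_{\tau,f}$ so that $\sqrt{J_f}$ is a positive multiple of a metric, and restrict to $\mathcal D_n\subset M_n^+(\CC)$. Your first way of handling the endpoint is also correct: read continuity at $0$ into the hypothesis, as the theorem requires. The paper's one-line proof makes this assumption tacitly.

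The fallback limiting argument should be dropped, because it cannot work: without continuity at $0$ the corollary is false. Take $f=\ind_{\{0\}}$, so $f(0)=1$ and $f=0$ on $(0,\infty)$.
\begin{itemize}
\item For $X\ge 0$, $f(X)$ is the projection onto $\ker X$.
\item For $\lambda\in(0,1)$ and $X,Y\ge0$ one has $\ker(\lambda X+(1-\lambda)Y)=\ker X\cap\ker Y$. The projection onto this intersection is dominated by both $P_{\ker X}$ and $P_{\ker Y}$.
\item Hence $f$ is operator convex on $[0,\infty)$, and it is not affine.
\item Yet $J_f(\rho,\sigma)=0$ for any two distinct invertible $\rho,\sigma\in\mathcal D_n$ ($n\ge2$), so definiteness fails.
\end{itemize}

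Each step of your fallback breaks on this example:
\begin{itemize}
\item $f_\varepsilon\equiv0$ is affine for every $\varepsilon>0$.
\item More generally, $J_{f_\varepsilon}(\rho,\sigma)$ converges to $J_{\tilde f}(\rho,\sigma)$, where $\tilde f$ is the continuous regularization with $\tilde f(0)=\lim_{x\downarrow0}f(x)$. It does not converge to $J_f(\rho,\sigma)$ when $\rho$ or $\sigma$ is singular, which is exactly where the discontinuity matters.
\item Appealing to \eqref{eq:opconvex-decomp-intro} for definiteness is circular, since that representation is proved only for $f$ continuous at $0$.
\end{itemize}
So state continuity at $0$ explicitly, either as a hypothesis or as part of the definition of operator convexity on $[0,\infty)$, and delete the limiting argument.
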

	
	\begin{proof}
		Apply Theorem~\ref{thm:main-opconvex} to $\A=M_n(\CC)$ with $\tau=\frac1n\Tr$ and restrict to $\mathcal D_n\subset \A_{+}$.
	\end{proof}
	
	% ============================================================

		\appendix
\section{Certified verification of Example~\ref{ex:explicit-integer}}\label{app:certified}

	This appendix records a concrete interval-arithmetic workflow which rigorously encloses the quantities
	$S_2$ and $S_3$ in Example~\ref{ex:explicit-integer}. The key point is that for $2\times 2$ real symmetric matrices,
	eigenvalues are available in closed form, so the whole computation reduces to one-dimensional interval
	operations (addition, multiplication, square root, logarithm) with directed rounding.
	
	\subsection{Closed-form eigenvalues for $2\times 2$ blocks}
	For $X=\begin{pmatrix}a&b\\ b&d\end{pmatrix}\in M_2^{++}(\mathbb{C})$ (real symmetric), set
	\[
	\Tr(X)=a+d,\qquad \det(X)=ad-b^2,\qquad \Delta=(a-d)^2+4b^2.
	\]
	Then the eigenvalues are
	\[
	\lambda_\pm(X)=\frac{\Tr(X)\pm \sqrt{\Delta}}{2}.
	\]
	Hence for $\eta(x)=x\log x$,
	\[
	\Tr(\eta(X))=\eta(\lambda_+(X))+\eta(\lambda_-(X)).
	\]
	\subsection{Interval-arithmetic pseudo-code}
	Below is pseudo-code in a language-agnostic style; any standard certified interval package
	(e.g.\ INTLAB, Arb, or Julia IntervalArithmetic) can implement the same steps.
	
	\begin{verbatim}
		Input:
		Matrices X_i (2x2), coefficients c_i (sum c_i = 0), entropy eta(x)=x*log(x)
		Normalized trace tau_2 = (1/2)Tr on M_2
		
		Interval primitives (directed rounding):
		+, -, *, /, sqrt(·), log(·) on intervals
		
		function eigvals_2x2_interval(a,b,d):
		tr   = a + d
		disc = (a - d)^2 + 4*b^2
		s    = sqrt(disc)
		lam_plus  = (tr + s)/2
		lam_minus = (tr - s)/2
		return (lam_plus, lam_minus)
		
		function Tr_eta_2x2_interval(X):
		(lam_plus, lam_minus) = eigvals_2x2_interval(X.a, X.b, X.d)
		return eta(lam_plus) + eta(lam_minus)
		
		function J_tau2_interval(X,Y):
		Z = (X + Y)/2
		# J_{tau_2,eta}(X,Y) = (1/2)tau_2(eta(X)) + (1/2)tau_2(eta(Y)) - tau_2(eta(Z))
		# with tau_2=(1/2)Tr this equals:
		return (1/4) * ( Tr_eta_2x2_interval(X) + Tr_eta_2x2_interval(Y)
		- 2*Tr_eta_2x2_interval(Z) )
		
		Compute S_2:
		S2 = 0
		for i=1..m:
		for j=1..m:
		S2 += c_i * c_j * J_tau2_interval(X_i, X_j)
		output rigorous enclosure interval(S2)
		
		For S_3 (block-diagonal density matrices):
		choose T > 2*max_i Tr(X_i)
		rho_i = blockdiag( X_i/T, 1 - Tr(X_i)/T ) in D_3
		use Tr(eta(rho_i)) = Tr(eta(X_i/T)) + eta(1 - Tr(X_i)/T)
		and similarly for (rho_i + rho_j)/2.
		With tau_3 = (1/3)Tr, define J_tau3 analogously and compute S_3.
	\end{verbatim}
	
	In the present paper we treat all decimal inputs as exact rationals at the interval endpoints, so the
	certification is independent of binary floating-point parsing.
\end{document}